\definecolor{link1}{rgb}{0,0,.7}
\definecolor{link2}{rgb}{0,0.25,0.5}
\newtheorem{theorem}{Theorem}[section]
\newtheorem{lemma}{Lemma}[section]
\newtheorem{proposition}{Proposition}[section]
\newtheorem{corollary}{Corollary}[section]
\theoremstyle{remark}
\newtheorem{remark}{Remark}[section]
\theoremstyle{definition}
\newtheorem{definition}{Definition}[section]
\numberwithin{equation}{section}
\def\br{\begin{remark}}
\def\er{\end{remark}}
\def\bp{\begin{proposition}}
\def\ep{\end{proposition}}
\def\bc{\begin{corollary}}
\def\ec{\end{corollary}}
\def\bd{\begin{definition}}
\def\ed{\end{definition}}
\def\non{\nonumber }
\def\INT{\int_{R_0}^{R_1}}
\newcommand{\RR}{\mathbb R}
\newcommand{\NN}{\mathbb N}
\newcommand{\mcS}{\mathcal S}
\DeclareMathOperator{\tr}{tr}
\newcommand\defeq{\stackrel{\scriptscriptstyle \text{def}}=}
\newcommand\del{\partial}
\newcommand\eps{{\varepsilon}}
\newcommand{\R}{{\mathbb R}}
\newcommand{\F}{\mathcal F}
\newcommand{\E}{\mathcal E}
\newcommand{\Id}{\mathbb I}
\newcommand{\bulk}{_\textit{bulk}}
\newcommand{\el}{_\textit{el}}
\renewcommand{\ge}{\geqslant}
\renewcommand{\le}{\leqslant}
\renewcommand{\geq}{\geqslant}
\renewcommand{\leq}{\leqslant}
\DeclarePairedDelimiter{\norm}{\lVert}{\rVert}
\DeclarePairedDelimiter{\abs}{\lvert}{\rvert}
\newcommand{\step}[1]{\par\emph{#1\@addpunct}}
\begin{document}
\title{
  Dynamic Cubic Instability in a 2D\\
   Q-tensor Model for Liquid Crystals
  \begingroup
  \footnotetext{%
    This work was partially supported by NSF grants
    DMS-0806703, % Xiang
    DMS-1007914, % Gautam
    and
    DMS-1252912. % Gautam
    GI also acknowledges partial support from an Alfred P. Sloan research fellowship.
    The authors thank the Center for Nonlinear Analysis (NSF grants DMS-0405343 and DMS-0635983),
     PIRE grant OISE-0967140, and OxPDE, the Oxford Centre for Nonlinear PDE, where part of this research was carried out.
  }
  \endgroup
}
\author{
  Gautam Iyer%
  \footnote{
    Dept.\ of Math.\ Sci.,
    Carnegie Mellon University, USA.
    \texttt{gautam@math.cmu.edu}}
\and
  Xiang Xu\footnote{
  Dept.\ of Math.\ Sci.,
  Purdue University, USA.
  \texttt{xuxiang@andrew.cmu.edu}}
\and
  Arghir D. Zarnescu\footnote{
  University of Sussex, UK
  and 
 Institute of Mathematics ``Simion Stoilow" of the Romanian Academy, % 21 Calea Grivitei Street, 010702 Bucharest,
  Romania.
  \texttt{A.Zarnescu@sussex.ac.uk}}
}
\date{}
\maketitle
\begin{abstract}
We consider a four-elastic-constant Landau-de Gennes energy
characterizing nematic liquid crystal configurations described using
the $Q$-tensor formalism. The energy contains a cubic term and is
unbounded from below. We study dynamical effects produced by the
presence of this cubic term by considering an $L^2$ gradient flow
generated by this energy. We work in two dimensions and  concentrate
on understanding the relations between the physicality of the
initial data and the global well-posedness of the system.
\end{abstract}

\section{Introduction}
This paper studies the dynamics of an important instability
phenomenon that arises in the Landau-de Gennes theory of nematic
liquid crystals \cite{B12, BM10, BZ11}. Mathematically our results
address global well-posedness of the $L^2$ gradient flow generated
by an energy functional that is unbounded from below in its natural
energy space. This turns out to be related to quantifying how the
flow affects the convex hull of the initial data.

We consider a Landau-de Gennes energy functional
$$
  \E
   [Q]=\int_{\Omega}\F (Q(x))\,dx,
$$
 where $\Omega\subset\RR^d$ with
$d=2,3$ and  $Q$ is a matrix valued function defined on $\Omega$
that takes values into the space of $Q$-tensors, namely $\mcS^{(d)}
\defeq \{M\in\RR^{d\times d}, \,M=M^{T},\,\tr(M)=0\}$. The matrix
$Q(x)$ is a measure of the local preferred orientation of the
nematic molecules at the point $x\in\Omega$, see for instance
\cite{NM04, BZ11}.

The energy density  $\F(Q)$ can be decomposed as:
$$
   \F(Q)=\F\el+\F\bulk
$$
 where $\F\el$ is the ``elastic part'' which depends on gradients of $Q$,
 and $\F\bulk$ is the ``bulk part'' that contains no gradients.

Invariances under physical symmetries impose certain restrictions on the form
of the elastic and bulk parts. The simplest and most common form
that is invariant under physical symmetries and still captures the
essential features~\cite{B12, NM04} assumes that $\mathcal F\el$ and
$\mathcal F\bulk$ are given by
\begin{align}\label{DefElasticPart}
 \mathcal{F}\el(Q)&\defeq L_1|\nabla{Q}|^2+L_2\partial_jQ_{ik}\partial_kQ_{ij}
+L_3\partial_jQ_{ij}\partial_kQ_{ik}
+L_4Q_{lk}\partial_kQ_{ij}\partial_lQ_{ij},\\
\label{DefBulkPart} \mathcal{F}\bulk(Q)&\defeq
\frac{a}{2}\tr(Q^2)-\frac{b}{3}\tr(Q^3)+\frac{c}{4}\tr^2(Q^2).
\end{align}
Here and in the following we assume the Einstein summation
convention by which repeated indices $i,j,k=1,\dots,d$ are
implicitly summed.

The coefficients $a,b,c$ and $L_k,k=1,2,3,4$ are assumed to be
non-dimensional (see~\cite{MG00}). For spatially homogeneous systems
the term $\mathcal{F}\bulk$ is bounded from below only if $c> 0$
(see~\cite{PZ12}). Physical considerations impose that $b\ge 0$
(see~\cite{M10}) and $a$ is a temperature dependent parameter that
can be taken to be either positive or negative. The most physically
relevant case is when $a$ is small. This corresponds to a
temperature near the supercooling point, below which the isotropic
phase becomes unstable. Thus we make the assumptions
\begin{equation}\label{AssumptionsBulk}
 b\ge 0
 \quad\text{and}\quad
 c>0.
\end{equation}
In two dimensions observe that $Q\in\mcS^{(2)}$ implies
$\tr(Q^3)=0$. Hence we may, without loss of generality, assume
$b=0$.

For the elastic part we note that the first three terms are
quadratic, while the fourth one (with coefficient $L_4$) is cubic.
The presence of a cubic term is rather unusual in most physical
systems. The retention of this term in our situation is motivated by the fact that it
allows reduction of the elastic energy $\mathcal{F}[Q]$ to the
classical Oseen-Frank energy of liquid crystals
 (with four elastic terms).
This is done by formally taking
$$
  Q(x)=s_+\left(n(x)\times n(x)-\frac{1}{d}\Id\right)
  \quad\text{where }
  s_+>0, \quad
  n:\Omega\to\mathbb{S}^{d-1},
$$
and substituting it in the definition of $\mathcal{E}[Q]$ (see Appendix~\ref{OF_LD} or~\cite{BZ11}).
Here $\Id$ denotes the identity matrix.

The cubic term, however, also comes with a price: The energy
$\mathcal E[Q]$ now has the ``unpleasant'' feature of being
unbounded from below~\cite{B12, BM10}. On the other hand, if $L_4=0$
the elastic part of $\mathcal{E}[Q]$,
$$
\mathcal E\el[Q] \defeq \int_\Omega\mathcal{F}\el(Q(x))\,dx,
$$
is bounded from below (and coercive) if and only if $L_1, L_2$ and $L_3$ satisfy certain conditions.
For $Q\in\mcS^{(3)}$ and three dimensional domains these conditions are developed in~\cite{LMT87} (see also \cite{TE98}).
For $Q\in\mcS^{(2)}$ and two dimensional
domains the conditions
\begin{equation}\label{eqn2DCoercivity}
L_1+L_2>0 \quad\text{and}\quad L_1+L_3>0,
\end{equation}
are equivalent to coercivity.
(We prove this in Lemma~\ref{lmaPositiveEnergy} in Appendix~\ref{sxnCoercivity}.)

One way to deal with the unboundedness and lack of coercivity caused
by the (necessary) presence of $L_4$ is to
 replace the bulk potential defined in \eqref{DefBulkPart}  with a potential $\psi(Q)$, which is finite if and only if $Q$ is
\emph{physical}%
\footnote{%
We recall~\cite{BM10, M10} that $Q$ is \emph{physical} if
$Q\in\mcS^{(d)}$ and after suitable non-dimensionalisations its
eigenvalues are between $-\frac{1}{d}$ and $1-\frac{1}{d}$. } (see
for instance~\cite{BM10} for $d = 3$). In this paper we aim to
directly study the physical relevance of the energy $\mathcal{E}[Q]$
keeping the more common potential \eqref{DefBulkPart}, instead of
the singular potential as in~\cite{BM10} (see \cite{FRSZ13, MW12}
for works in the dynamical context). Of course the static theory
will not provide anything meaningful when the energy $\mathcal E[Q]$
is unbounded. Consequently, we focus our attention on the dynamical
aspect.

We study a gradient flow in the ``simplest setting''; namely an
$L^2$ gradient flow in $\mathbb{R}^2$ corresponding to the energy
functional $\mathcal{E}[Q]$ where $Q$ takes values in $\mcS^{(2)}$.
Explicitly, this is
\begin{equation}\label{EquationQAbstract}
\frac{\partial Q_{ij}}{\partial
t}=-\left(\frac{\delta\mathcal{E}}{\delta Q}\right)_{ij}+\lambda
\delta_{ij} +\mu_{ij}-\mu_{ji}
\end{equation}
where  $\lambda$ is a Lagrange  multiplier corresponding to the
constraint $\tr(Q)=0$ and for $i, j \in \{1,2\}$ the
$\mu_{ij}$'s are the Lagrange multipliers  corresponding to the
constraints $Q_{ij}=Q_{ji}$. Here $\frac{\delta \E}{\delta Q}$
denotes the variational derivative of $\E$ with respect to $Q$,
defined by
\begin{equation*}
   \frac{\delta \E}{\delta Q}(\varphi) = \left. \frac{d}{dt} \E(Q + t
    \varphi) \right|_{t=0}
\end{equation*}
for $\varphi \in C^\infty_c(\Omega, M^{d\times d}(\mathbb{R}))$.
Integrating by parts as necessary we can identify the linear
operator $\frac{\delta \E }{\delta Q}$ with a matrix-valued
function.

After some lengthy but straightforward calculations (which we carry
out in Appendix~\ref{sec:gradient flow derivation})
equation~\eqref{EquationQAbstract} reduces to
\begin{align}\label{eqnQExpansion}
\frac{\partial Q_{ij}}{\partial t} =2L_1&\Delta{Q_{ij}}-aQ_{ij}-c
\,\tr(Q^2)Q_{ij}
+(L_2+L_3)\left(\partial_j\partial_kQ_{ik}+\partial_i\partial_kQ_{jk}\right)\non\\
&-(L_2+L_3)\partial_l\partial_kQ_{lk}\delta_{ij}
+2L_4\partial_lQ_{ij}\partial_kQ_{lk}+2L_4\partial_l\partial_kQ_{ij}Q_{lk}
\non\\
&-L_4\partial_iQ_{kl}\partial_jQ_{kl}+\frac{L_4}{2}|\nabla
Q|^2\delta_{ij}.
\end{align}
We study this system of equations on a bounded domain
$\Omega\subset\mathbb{R}^{2}$ with  initial data and boundary conditions
given by
\begin{equation}\label{eqnQICBC}
Q(x, 0)=Q_0(x),
\quad\text{and}\quad
Q(x, t)|_{\partial\Omega}=\tilde{Q}(x).
\end{equation}
\begin{equation*}
  Q_0|_{\partial\Omega}=\tilde Q.
\end{equation*}

The main results in this paper are to show:
\begin{compactitem}
  \item Global existence of weak solutions to~\eqref{eqnQExpansion}--\eqref{eqnQICBC} in two dimensions,
  for $H^1\cap L^\infty$ initial data that is small in $L^\infty$ (Theorem~\ref{thm2Dgexist}, below).

  \item Finite time blow up (in $L^2$) of solutions to~\eqref{eqnQExpansion}--\eqref{eqnQICBC}
  in two dimensions, for specially constructed (large) initial data (Theorem~\ref{thmBlowUp}, below).

  \item The ``preservation of physicality'' of the initial data in two or three dimensions and a
  simple version of the flow (Proposition~\ref{prop:physicality}, below).
\end{compactitem}
We defer the precise statements (and proofs) of these results to subsequent sections,
and momentarily pause to briefly outline the ideas involved in the proofs and the problems encountered.

The main difficulty in proving global existence stems from the fact that the energy is apriori unbounded from below.
 However, from equation~\eqref{eqnQExpansion} we see that if $\|Q\|_{L^\infty}$ is small enough,
 then the cubic term can be absorbed into the other terms, which are positive definite under the assumption~\eqref{eqn2DCoercivity}.
 Here
 \begin{equation*}
  \norm{Q}_{L^\infty} = \sup_{x\in\Omega} |Q(x)|,
  \quad\text{ where }
  |Q(x)|^2 = \tr\left(Q(x)Q(x)^t\right) = \tr\left(Q^2(x)\right).
 \end{equation*}
 Thus the usual $H^1$-level information provided by the energy in such gradient flows can be effectively utilized,
 provided we apriori guarantee a smallness condition on the $L^\infty$-norm.
 Our main tool (Proposition~\ref{first proposition on 2D small data}) does precisely this: namely,
 Proposition~\ref{first proposition on 2D small data} shows smallness of~$\norm{Q}_{L^\infty}$ globally in time, provided it is small enough initially.
 We use this to prove global existence of weak solutions in Theorem~\ref{thm2Dgexist}.
 Global existence of strong solutions should now follow using relative standard methods, provided
 the initial data is regular, small and is compatible with the boundary conditions (see for instance \cite{EV98}).

 We complement Theorem~\ref{thm2Dgexist} with Theorem~\ref{thmBlowUp} which shows the existence of a finite time
 blow up using large, specially constructed initial data.
  The proof amounts to finding a non-linear differential inequality for a quantity that blows up in finite time.
  The main difficulty in this context is again the high order nonlinearity.
 We use the energy inequality for control of this, even though the sign of the energy is not apriori controlled.

 Theorems~\ref{thm2Dgexist} and~\ref{thmBlowUp} give a dichotomy common to many nonlinear PDE's:
  long time existence if the initial data is small enough, and examples of finite time blow-up for large data.
  This leads to an interesting question about the maximal size of initial data for which solutions exist globally in time.
  This is a very subtle one and we only provide a modest contribution in this direction.
  We think that an important factor affecting global existence is the \emph{physicality} of the initial data
  -- namely the requirement that after a particular normalization the eigenvalues of the initial data are within
  the interval $(-\frac{1}{d},1-\frac{1}{d})$ (see more about physicality in \cite{B12, BM10}).

There exists a direct and delicate relation between the smallness of $\|Q\|_{L^\infty}$ and the aforementioned notion of ``physicality''.
Specifically, the physicality of a $Q$-tensor imposes an upper bound on the size of $\|Q\|_{L^\infty}$ but in general the contrary is false.
Namely having an upper bound for $\|Q\|_{L^\infty}$ implies physicality in 2D, but not necessarily in higher dimensions.

 More precisely, if $Q\in\mcS^{(d)}$ is physical, i.e. its eigenvalues $\lambda_i,i=1,\dots,d$ are
  in the interval $(-\frac{1}{d},1-\frac{1}{d})$, hence $\tr(Q^2(x))=\sum_{i=1}^d \lambda_i^2\le d(1-\frac{1}{d})^2$.
  On the other hand, the condition $\tr(Q^2(x))=\sum_{i=1}^d \lambda_i^2\le d(1-\frac{1}{d})^2$ for $Q\in\mcS^{(d)}$
   implies that the eigenvalues of $Q$ are between $(-\frac{1}{d},1-\frac{1}{d})$ only for $d=2$, but not for $d=3$!
   For $d=3$, the notion of physicality is related to $Q$ belonging to a convex set (not just a ball as for  $d=2$).
   Proposition~\ref{prop:physicality} explores how the gradient flow preserves the convex hull of the initial data in a simple setting, for both $d=2$ and $d=3$.

\subsection*{Plan of this paper.}%{{{2

This paper is organized as follows.
In section~\ref{sxnMain} we precisely state the main results of this paper and state our notational conventions.
In section~\ref{sec:well-posedness} prove the small data global existence result (Theorem~\ref{thm2Dgexist}).
In section~\ref{sec:blow-up} we exhibit an example of a finite time blow up with large initial data.
In section~\ref{sec:physicality} we prove the preservation of physicality (Proposition~\ref{prop:physicality}).

There are numerous technical calculations involved in this paper, which for clarity of presentation have been relegated to appendices.
Appendix~\ref{sec:gradient flow derivation} shows that the gradient flow defined by~\eqref{EquationQAbstract} satisfies~\eqref{eqnQExpansion}.
Appendix~\ref{OF_LD} shows how the Landau-de~Gennes energy functional can be reduced to the Oseen-Frank energy
functional in two dimensions, and the necessity of the cubic term for this purpose.
Appendix~\ref{sxnCoercivity} shows that the coercivity assumption~\ref{eqn2DCoercivity} is equivalent to coercivity in two dimensions.
Finally Appendix~\ref{sxnCalculations} reduces the evolution for $Q$ into a one dimensional problem when the
 initial data is of the type used to prove the blow up in Theorem~\ref{thmBlowUp}.

\section{Main results and notational conventions.}\label{sxnMain}

Our first main result in this paper is global well-posedness of~\eqref{eqnQExpansion} for small initial data.
The crucial step in the proof is the preservation of $L^\infty$-smallness, and we begin by stating this.
\begin{proposition}\label{first proposition on 2D small data}%{{{
Consider the $2D$ evolution problem  \eqref{eqnQExpansion}-\eqref{eqnQICBC} on a bounded smooth domain
$\Omega\subset\RR^2$. Suppose the coercivity condition
\eqref{eqn2DCoercivity} holds together with the structural assumptions \eqref{AssumptionsBulk}
. For smooth solutions $Q$ there exists an explicitly
computable constant $\eta_1$ (depending on $L_i,i=1,\dots,4$) so
that if
\begin{equation}\label{boundary:assumption}
 \|\tilde{Q}\|_{L^\infty(\partial\Omega)} \le \|Q_0\|_{L^\infty(\Omega)}<\sqrt{2\eta_1}
\end{equation}
and
\begin{equation}\label{small coefficient assumption}
|a| \leq 2c\eta_1,
\end{equation}
then for any $T>0$, we have
\begin{equation}\label{eqnQBound}
 \|Q\|_{L^\infty((0, T)\times\Omega)} \leq \sqrt{2\eta_1}.
\end{equation}
\end{proposition}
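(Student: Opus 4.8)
The plan is to deduce the matrix bound \eqref{eqnQBound} from the scalar pointwise estimate $\phi \le 2\eta_1$ on $(0,T)\times\Omega$, where $\phi \defeq \tr(Q^2) = \abs{Q}^2$. Contracting \eqref{eqnQExpansion} with $2Q_{ij}$, using $\tr(Q)=0$ (which kills every $\delta_{ij}$ term) and $2Q_{ij}\partial_m Q_{ij} = \partial_m\phi$, one gets an equation of the form
\begin{equation*}
  \partial_t\phi = 2L_1\Delta\phi + 2L_4 Q_{lk}\partial_l\partial_k\phi + 4(L_2+L_3)Q_{ij}\partial_j\partial_k Q_{ik} - 4L_1\abs{\nabla Q}^2 + R - 2(a + c\phi)\,\phi,
\end{equation*}
where $R$ gathers the remaining terms, each carrying a factor $L_4$ and being either linear in $\nabla\phi$ or quadratic in $\nabla Q$. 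A direct pointwise maximum principle is not available here, because $4(L_2+L_3)Q_{ij}\partial_j\partial_k Q_{ik}$ and the quadratic part of $R$ involve derivatives of $Q$ that are not controlled by $\nabla\phi$ and $D^2\phi$, so at an interior maximum of $\phi$ their sign is unknown. I would instead argue by an energy estimate for a truncation of $\phi$.

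Fix $T>0$ and choose a level $K$ with $\max\bigl(\norm{Q_0}_{L^\infty(\Omega)}^2,\,\abs{a}/c\bigr) \le K < 2\eta_1$, possible by \eqref{boundary:assumption} and \eqref{small coefficient assumption}. Put $w \defeq (\phi - K)_+$ and test the equation for $\phi$ against $w$ (equivalently, test \eqref{eqnQExpansion} against $Q_{ij}w$, which is symmetric, traceless, and vanishes on $\partial\Omega$). Since $\phi|_{\partial\Omega} = \abs{\tilde Q}^2 \le \norm{Q_0}_{L^\infty}^2 \le K$ by \eqref{boundary:assumption}, $w$ vanishes on $\partial\Omega$ and all boundary terms produced by integrating by parts drop. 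One then finds: a time term $\tfrac12\tfrac{d}{dt}\norm{w}_{L^2(\Omega)}^2$; from the quadratic elastic terms a dissipative quantity which, after a Young inequality on the cross terms and by the coercivity \eqref{eqn2DCoercivity} (in a weighted form of Lemma~\ref{lmaPositiveEnergy}), is $\le -\delta\bigl(\int_\Omega\abs{\nabla Q}^2 w + \int_\Omega\abs{\nabla w}^2\bigr)$ for some $\delta>0$; a reaction contribution $-2\int_\Omega(a+c\phi)\,\phi\,w \le 0$, since on $\{w>0\}$ we have $\phi > K \ge \abs{a}/c$ and $c>0$, hence $a+c\phi>0$; and, crucially, $L_4$ and $L_2+L_3$ cross terms bounded---after the same integrations by parts and using $\abs{\nabla\phi}\le 2\abs{Q}\,\abs{\nabla Q}$---by $C(L_1,\dots,L_4)\,\norm{Q}_{L^\infty}\bigl(\int_\Omega\abs{\nabla Q}^2 w + \int_\Omega\abs{\nabla w}^2\bigr)$, plus terms with an extra factor $w\le\norm{Q}_{L^\infty}^2$.

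This estimate is self-referential: absorbing the $L_4$/mixed terms into the dissipation requires $\norm{Q}_{L^\infty}$ small, which is what we are proving. The way out is a continuity argument. By \eqref{boundary:assumption}, $\norm{Q(\cdot,0)}_{L^\infty(\Omega)} < \sqrt{2\eta_1}$; if the conclusion failed, the continuous map $t\mapsto\norm{Q(\cdot,t)}_{L^\infty(\Omega)}$ would first hit $\sqrt{2\eta_1}$ at some $\tau\in(0,T]$, and on $[0,\tau]$ we would have $\norm{Q}_{L^\infty}\le\sqrt{2\eta_1}$. Then the $L_4$/mixed terms carry the prefactor $C(L_i)\sqrt{2\eta_1}$, and choosing $\eta_1$ small enough in terms of $L_1,\dots,L_4$ and $\delta$---this is the explicitly computable $\eta_1$ of the statement---absorbs them, leaving $\tfrac{d}{dt}\norm{w}_{L^2(\Omega)}^2 \le C\norm{w}_{L^2(\Omega)}^2$ on $[0,\tau]$. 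As $w(\cdot,0)\equiv 0$, Gronwall gives $w\equiv 0$ on $[0,\tau]\times\Omega$, i.e.\ $\phi\le K<2\eta_1$ there, so $\norm{Q(\cdot,\tau)}_{L^\infty}\le\sqrt{K}<\sqrt{2\eta_1}$, contradicting the choice of $\tau$ unless $\tau=T$. Hence \eqref{eqnQBound} holds throughout $(0,T)\times\Omega$.

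The main obstacle, I expect, is the \emph{weighted} coercivity invoked above: bounding $\int_\Omega(\text{quadratic elastic form in }\nabla Q)\,w$ below by $\delta\int_\Omega\abs{\nabla Q}^2 w$ when the weight $w=(\phi-K)_+$ degenerates on $\partial\{\phi>K\}$. This is where the precise shape of \eqref{eqn2DCoercivity} matters and where the admissible range of $\eta_1$ relative to $\abs{L_1},\dots,\abs{L_4}$ gets pinned down. In particular the cross term $\int_\Omega(\operatorname{div}Q)^{\mathsf T}Q\,\nabla w$ coming from the $L_2+L_3$ part is delicate---it is not obviously dominated by the weighted dissipation---and controlling it may force a more careful test field of the form $Q_{ij}\,g(\phi)$, with $g$ non-decreasing and supported in $\{\phi>K\}$, or a De~Giorgi/Stampacchia iteration over the levels $K$ that exploits the $\norm{\nabla Q}_{L^\infty}$ bound available for smooth solutions. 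Once this weighted coercivity is in place, the rest---the $\phi$-equation, the integrations by parts, the sign of the reaction term via \eqref{small coefficient assumption}, and the bootstrap---is routine.
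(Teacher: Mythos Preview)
Your overall plan---derive a scalar equation for $\phi=\tr(Q^2)$, test against the truncation $(\phi-K)^+$, and close by a continuity argument---is exactly the paper's strategy. The gap is in your handling of the $L_2+L_3$ terms, which you flag as the main obstacle but do not resolve. In two dimensions that obstacle is illusory: writing $Q=\bigl(\begin{smallmatrix} p & q\\ q & -p\end{smallmatrix}\bigr)$, a direct computation shows that the full $L_2+L_3$ contribution to~\eqref{eqnQExpansion} equals $(L_2+L_3)\Delta Q_{ij}$; equivalently, for $Q\in\mcS^{(2)}$ one has the pointwise identity $2Q_{ij}\,\partial_j\partial_k Q_{ik}=Q_{ij}\Delta Q_{ij}$. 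Hence in your $\phi$-equation the entire quadratic elastic block collapses to $\zeta\,\Delta\phi-2\zeta\,\abs{\nabla Q}^2$ with $\zeta=2L_1+L_2+L_3>0$ by~\eqref{eqn2DCoercivity}, and after testing against $w=(\phi-K)^+$ one obtains cleanly $-\zeta\int_\Omega\abs{\nabla w}^2-2\zeta\int_\Omega\abs{\nabla Q}^2\,w$. There is no ``$(\operatorname{div}Q)^{\mathsf T}Q\,\nabla w$'' cross term, no weighted version of Lemma~\ref{lmaPositiveEnergy} is needed, and no De~Giorgi iteration enters.

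With this identity in hand the only remaining terms carry $L_4$ and are quadratic in $\nabla p,\nabla q$ (or $\nabla\phi$) with a prefactor bounded by a multiple of $\abs{L_4}\,\abs{Q}$; they are absorbed into the dissipation exactly when $\abs{Q}\le\sqrt{2\eta_1}$ with the explicit value $\eta_1=\zeta^2/\bigl((1+4\sqrt{2})^2L_4^2\bigr)$. The paper carries this out in the scalar variables $p,q$ (equations~\eqref{p equ}--\eqref{q equ} and~\eqref{h equation}) and arrives directly at $\frac{d}{dt}\int_\Omega\bigl((h^2-\eta_1)^+\bigr)^2\,dx\le 0$, so the Gronwall step in your argument is harmless but unnecessary.
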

\begin{remark}
As mentioned earlier, the physically relevant regime is when the
parameter $a$ has small magnitude. This is consistent with the
assumption \eqref{small coefficient assumption}.

 Furthermore a careful check of the proof of Proposition~\ref{first proposition on 2D small data}
  shows that~\eqref{eqnQBound} still holds for weak solutions that satisfy~\eqref{boundary:assumption} and~\eqref{small coefficient assumption}.
\end{remark}
\begin{theorem}\label{thm2Dgexist}
Suppose the coefficients $a, b, c$ and $L_1$, \dots, $L_4$ satisfy
the coercivity condition~\eqref{eqn2DCoercivity} together with the
structural assumptions~\eqref{AssumptionsBulk}, and let
$\Omega\subset\RR^2$ be a smooth, bounded domain. There exists an
explicitly computable constant $\eta_2$ (depending on
$L_i,i=1,\dots,4$ and $\Omega$) so that if $Q_0 \in H^1(\Omega)\cap
L^\infty(\Omega)$, $\tilde{Q}\in H^\frac32(\partial\Omega)$, and the
smallness conditions \eqref{boundary:assumption} and \eqref{small
coefficient assumption} hold with $\eta_1$ replaced by $\eta_2$,
then the system~\eqref{eqnQExpansion}--\eqref{eqnQICBC} has a unique global weak solution%
\footnote{see Definition~\ref{def of weak solution} for the precise
definition of a weak solution}. Further the initial
smallness~\eqref{boundary:assumption} is preserved for all time.
\end{theorem}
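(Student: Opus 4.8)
The plan is to prove Theorem~\ref{thm2Dgexist} in two stages: first obtain an a priori energy-type estimate valid as long as the $L^\infty$-smallness~\eqref{eqnQBound} holds, then close the argument by a standard Galerkin approximation, using Proposition~\ref{first proposition on 2D small data} (in its weak-solution form noted in the remark) to propagate the $L^\infty$ bound throughout. The key observation is that, once $\norm{Q}_{L^\infty}\le\sqrt{2\eta_1}$ with $\eta_1$ (here $\eta_2$) chosen sufficiently small relative to $L_1,\dots,L_4$, the quadratic gradient terms in~\eqref{eqnQExpansion} coming from $L_1,L_2,L_3$ (which are coercive by~\eqref{eqn2DCoercivity}, Lemma~\ref{lmaPositiveEnergy}) dominate the cubic $L_4$-terms, so the effective elastic energy $\mathcal E\el[Q]$ controls $\norm{\nabla Q}_{L^2}^2$ modulo lower-order terms. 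Testing~\eqref{eqnQExpansion} against $\partial_t Q$ (after subtracting a fixed extension of the boundary data $\tilde Q\in H^{3/2}(\partial\Omega)$ to handle the inhomogeneous boundary condition) yields an energy identity; the $L^\infty$-smallness lets one absorb the troublesome cubic contributions, and the $a$- and $c$-terms are handled using~\eqref{small coefficient assumption} and the sign of $c$, giving $\frac{d}{dt}\mathcal E[Q] + \|\partial_t Q\|_{L^2}^2 \le C\mathcal E[Q] + C$, hence an a priori bound on $\sup_{t\in(0,T)}\|\nabla Q\|_{L^2}^2$ and $\int_0^T\|\partial_t Q\|_{L^2}^2$ on any finite interval.

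Next I would set up a Galerkin scheme: take an orthonormal basis of $L^2(\Omega,\mcS^{(2)})$ adapted to the boundary condition (e.g.\ eigenfunctions of the relevant linear operator, plus a fixed extension of $\tilde Q$), project~\eqref{eqnQExpansion} onto the first $N$ modes, and solve the resulting ODE system locally in time. The crucial point is that the $L^\infty$ propagation must survive the approximation: either one checks that the argument of Proposition~\ref{first proposition on 2D small data} (which is built on a maximum-principle / Stampacchia-type truncation applied to $|Q|^2$) passes to the Galerkin level, or — more cleanly — one passes to the limit first on a short time interval where $L^\infty$-smallness is not yet lost, and then uses the limit equation together with Proposition~\ref{first proposition on 2D small data} to see that~\eqref{eqnQBound} never degenerates, allowing the solution to be continued. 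With the uniform $H^1$ bound from the energy estimate and the uniform $\int_0^T\|\partial_t Q\|_{L^2}^2$ bound, Aubin--Lions gives strong convergence of the Galerkin approximations in $L^2((0,T);H^1)$ and a.e., which suffices to pass to the limit in all the nonlinear terms — the quadratic gradient products $\partial_l Q\,\partial_k Q$, the term $Q\,\partial^2 Q$, and the cubic bulk term $\tr(Q^2)Q$ — since $Q_N$ is bounded in $L^\infty$ and converges strongly in $L^2(H^1)$. The limit $Q$ is then the desired weak solution in the sense of Definition~\ref{def of weak solution}.

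For uniqueness, I would take two weak solutions $Q^{(1)},Q^{(2)}$ with the same data, both satisfying~\eqref{eqnQBound}, set $W=Q^{(1)}-Q^{(2)}$, which vanishes on $\partial\Omega$ and at $t=0$, and derive a Gr\"onwall inequality for $\|W(t)\|_{L^2}^2$. Testing the difference equation against $W$, the leading term $2L_1\Delta W$ gives $-2L_1\|\nabla W\|_{L^2}^2$; the $L_2+L_3$ terms and the linear $a$-term are routine; the cubic bulk difference $\tr((Q^{(1)})^2)Q^{(1)}-\tr((Q^{(2)})^2)Q^{(2)}$ is Lipschitz in $W$ with constant controlled by $\norm{Q^{(i)}}_{L^\infty}^2$; and the genuinely delicate terms are the $L_4$ ones, in particular $\partial_l\partial_k W\, Q^{(1)}_{lk}$ and $\partial_l W\,\partial_k Q^{(1)}_{lk}$ type differences, which involve two derivatives on $W$ — these must be absorbed into $L_1\|\nabla W\|_{L^2}^2$ using integration by parts and the smallness of $\norm{Q}_{L^\infty}$, exactly as in the a priori estimate. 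In two dimensions one may also need the Ladyzhenskaya inequality $\|f\|_{L^4}^2\le C\|f\|_{L^2}\|f\|_{H^1}$ to control products of gradient differences. Once every term is bounded by $C\|W\|_{L^2}^2 + \tfrac{L_1}{2}\|\nabla W\|_{L^2}^2$, Gr\"onwall forces $W\equiv 0$.

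The main obstacle I anticipate is the interaction between the $L^\infty$ propagation mechanism and the Galerkin truncation: Proposition~\ref{first proposition on 2D small data} as stated is proved for smooth (or weak) solutions of the full PDE, and its truncation argument on $|Q|^2$ does not obviously commute with finite-dimensional spectral projection, because the projection destroys the pointwise structure needed for the maximum principle. Circumventing this — by doing the limit-then-continue argument, or by an alternative regularization (e.g.\ a parabolic mollification or vanishing-viscosity scheme) for which the truncation argument is legitimate — is the technical heart of the proof; everything else is the by-now standard combination of energy estimates, compactness, and Gr\"onwall made possible by the a priori $L^\infty$ smallness.
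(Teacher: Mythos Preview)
You have correctly identified both the overall architecture (a priori estimates $\Rightarrow$ approximation $\Rightarrow$ compactness $\Rightarrow$ Gr\"onwall uniqueness) and, crucially, the real obstacle: the Stampacchia-type argument behind Proposition~\ref{first proposition on 2D small data} does \emph{not} survive spectral projection, so a plain Galerkin scheme cannot propagate $L^\infty$-smallness. Where your proposal falls short is in resolving this obstacle. Your two suggested fixes are each problematic. The first (run the truncation argument at the Galerkin level) fails for exactly the reason you suspect: multiplying by $(h^2-\eta)^+$ and integrating by parts uses the full PDE structure, which $\mathcal P_m$ destroys. The second (pass to the limit on a short time interval, then continue) is circular as stated: to pass to the limit you need uniform-in-$m$ energy bounds, but the energy is only coercive once you already know $\|Q_m\|_{L^\infty}$ is small, and there is no mechanism forcing $T_m$ (the time each $Q_m$ stays small) to be bounded below as $m\to\infty$.

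The paper closes this gap by a device you do not mention: it augments the equation with an $\varepsilon$-small \emph{singular} potential $f(p,q)=-\ln(4\eta_2-p^2-q^2)$, which is $+\infty$ outside the ball $\{p^2+q^2<4\eta_2\}$. Because this is an energy term, finiteness of the (modified) energy forces $p^2+q^2<4\eta_2$ a.e.\ --- and \emph{this} bound does survive Galerkin, since it comes from the energy inequality, not a maximum principle. With $4\eta_2$ in hand one gets uniform $H^1$ and $L^2_tH^2$ bounds and can send $m\to\infty$; the limit now solves the genuine PDE (plus the $\varepsilon$-term), so Proposition~\ref{first proposition on 2D small data} applies and upgrades the bound to $p^2+q^2\le\eta_2$. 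This stronger bound keeps the solution well inside the domain of $f$, so $\varepsilon\partial_p f,\varepsilon\partial_q f$ are uniformly $O(\varepsilon)$ and vanish as $\varepsilon\to0$. (There is a further layer: $f$ is first replaced by smooth convex $f_N\nearrow f$ so that the Galerkin ODEs make sense, with $N\to\infty$ taken before $m\to\infty$.) Your final remark about ``an alternative regularization for which the truncation argument is legitimate'' is pointing in the right direction, but the specific idea --- making the $L^\infty$ constraint energy-enforced rather than maximum-principle-enforced --- is the missing ingredient. Your uniqueness sketch, by contrast, is essentially the paper's Lemma~\ref{lemma on continuous dependence}.
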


We prove Proposition~\ref{first proposition on 2D small data} and Theorem~\ref{thm2Dgexist} in section~\ref{sec:well-posedness}.
The smallness assumption on the initial data is essential;
we complement Theorem~\ref{thm2Dgexist} with a result showing that certain solutions exhibit a finite time blow up.

\begin{theorem}\label{thmBlowUp}
Suppose the coefficients $a, b, c$ and $L_1$, \dots, $L_4$ satisfy
the coercivity condition~\eqref{eqn2DCoercivity} together with the
structural assumptions~\eqref{AssumptionsBulk}. There exists a
smooth domain $\Omega$, smooth initial data $Q_0$, and a smooth
(time independent) function $\tilde Q: \del \Omega \to \R$  such
that the system~\eqref{eqnQExpansion} with Dirichlet boundary
conditions $\tilde Q$ does not admit a global smooth solution.
\end{theorem}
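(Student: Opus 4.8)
The plan is to reduce the two–dimensional system \eqref{eqnQExpansion} to a scalar one–dimensional equation by a symmetry ansatz, and then to produce a finite–time singularity for that reduced equation by a nonlinear differential inequality in which the (sign–indefinite, merely monotone) energy is used to control the high–order nonlinearity. Concretely, take $\Omega$ to be (a smoothing of) a rectangle $(0,\ell)\times(0,\ell')$ and look for solutions of the form $Q(x_1,x_2,t)=\operatorname{diag}\!\big(u(x_1,t),-u(x_1,t)\big)$, with $u$ independent of $x_2$. Substituting into \eqref{eqnQExpansion} — the computation is relegated to Appendix~\ref{sxnCalculations} — one checks that the off–diagonal components and the $x_2$–direction are automatically consistent and that $u$ solves
\[
u_t=\big(2L_1+L_2+L_3+2L_4u\big)\,u_{xx}+L_4\,u_x^2-au-2cu^3 \qquad\text{on }(0,\ell),
\]
with Dirichlet data read off from $\tilde Q$. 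By \eqref{eqn2DCoercivity} the constant $2L_1+L_2+L_3$ is strictly positive, so this quasilinear equation is parabolic exactly as long as $u>-(2L_1+L_2+L_3)/(2L_4)$ (assume $L_4>0$; the opposite sign is handled by $u\mapsto -u$); standard theory then gives a unique smooth solution on a maximal interval $[0,T^\ast)$, staying in this forward–parabolic regime while it exists.

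The next observation is that the scalar equation is the $L^2$ gradient flow of
\[
\E_{1D}[u]=\int_0^\ell\Big[\big(2L_1+L_2+L_3+2L_4u\big)u_x^2+au^2+cu^4\Big]\,dx ,
\]
which, like the full functional, is unbounded from below: a deep, narrow negative spike makes the cubic elastic term $2L_4uu_x^2$ arbitrarily negative while the narrowness keeps the stabilising quartic $cu^4$ under control. For smooth solutions one has the dissipation identity $\tfrac{d}{dt}\E_{1D}[u(t)]=-2\|u_t(t)\|_{L^2}^2\le 0$, hence $\E_{1D}[u(t)]\le\E_{1D}[u_0]$ on $[0,T^\ast)$. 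This monotonicity is essentially the only a priori information available globally, and the sign of $\E_{1D}$ itself is not controlled.

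To get the blow up, fix a time–independent $\tilde Q$ (for instance $\tilde Q\equiv 0$, so $u|_{\partial}=0$, chosen so that the integrations by parts below yield boundary terms of the favourable sign), and take $u_0$ to be a sufficiently deep and narrow negative bump so that $\E_{1D}[u_0]<-\Lambda$, for a threshold $\Lambda$ depending only on $a,c,\ell,L_1,\dots,L_4$. One then introduces a suitable functional $M(t)$ — e.g.\ $M(t)=\|u(t)\|_{L^2}^2$, possibly replaced by a weighted or negative–power variant in the spirit of a concavity argument — differentiates it, substitutes the equation, integrates by parts, and uses the energy bound $\E_{1D}[u(t)]\le\E_{1D}[u_0]<-\Lambda$ to absorb the terms of indefinite sign (the $L_4uu_x^2$ cross term and the quartic contribution). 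The outcome should be a nonlinear differential inequality (of Riccati or concavity type) forcing $M(t)\to\infty$ as $t\uparrow T$ for some $T\le T^\ast$, once $-\E_{1D}[u_0]$ is large enough. Since a global smooth solution of \eqref{eqnQExpansion} on this $\Omega$ with this $\tilde Q$ would keep $\|Q(t)\|_{L^2(\Omega)}^2=2\ell'\,\|u(t)\|_{L^2(0,\ell)}^2$ finite for all time — and smoothness also keeps $\E_{1D}$ bounded below — no such solution can exist.

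The crux is the last step. The only global control is the energy, of unknown sign, so the cubic elastic nonlinearity $L_4uu_x^2$ and the quartic bulk term $cu^4$ — the latter of ``supercritical'' scaling relative to $\|u\|_{L^2}^2$ — must be dominated purely through the energy identity. Producing a genuinely autonomous differential inequality (one that does not degrade as $M$ grows), choosing the right functional $M$ together with the right profile for the initial bump (deep and narrow enough, and compatible with $\tilde Q$), and arranging the boundary terms so that they carry the correct sign — which is why the geometry of $\Omega$ and the choice of $\tilde Q$ enter the construction — is the delicate point.
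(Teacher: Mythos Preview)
Your reduction ansatz has a fatal compatibility problem with the boundary conditions. If $Q(x_1,x_2,t)=\operatorname{diag}(u(x_1,t),-u(x_1,t))$ on a rectangle $(0,\ell)\times(0,\ell')$, then on the horizontal edges $x_2=0$ and $x_2=\ell'$ the Dirichlet data must equal $\operatorname{diag}(u(x_1,t),-u(x_1,t))$ for \emph{every} $t$; since $\tilde Q$ is required to be time-independent, this forces $u(\cdot,t)$ to be stationary, and hence $u\equiv u_0$, killing the dynamics. Smoothing the corners does not help --- it only makes the incompatibility worse, since an $x_2$-independent solution cannot match any nontrivial smooth boundary data on a curved portion of $\partial\Omega$. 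The paper's choice of the annulus $B_{R_1}\setminus B_{R_0}$ together with the radial hedgehog ansatz $Q=\theta(t,|x|)\big(\tfrac{x\otimes x}{|x|^2}-\tfrac12\Id\big)$ is precisely what makes the reduction compatible with time-independent Dirichlet data on a smooth bounded domain: the boundary consists of two circles, on each of which $\theta$ is a constant.

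Independently of this, your blow-up step is only a plan, not an argument: you acknowledge that producing an autonomous differential inequality from the energy bound is ``the delicate point'' but do not carry it out. In the paper the mechanism is quite concrete. One tracks $M(t)=\int_{R_0}^{R_1}\theta_-^2\,r\,dr$ (for $L_4<0$; otherwise $\theta_+$), multiplies the reduced equation by $-\theta_- r$, and obtains an identity in which the only unfavourable term is $-\zeta\int(\theta_-')^2 r\,dr$. That term is controlled not by a concavity or Levine-type device, but by the energy inequality itself: multiplying the reduced equation by $-\partial_t\theta_-\,r$ gives monotonicity of a reduced energy $\mathcal{F}(t)$, from which $-\zeta\int(\theta_-')^2 r\,dr$ is bounded below by cubic terms plus $\mathcal{F}(0)$. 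After this substitution the surviving terms are of the form $\int(\theta_-')^2\theta_-\,r\,dr$ and $\int\theta_-^3/r\,dr$; Poincar\'e applied to $(\theta_-^{3/2})'$ on the thin annulus $R_0^2\pi^2>9(R_1-R_0)^2$ turns the former into the latter with a favourable sign, and H\"older gives $\int\theta_-^3\,dr\gtrsim M^{3/2}$. The result is $\tfrac{d}{dt}M\ge M_0 M^{3/2}-|a|M+4\mathcal{F}(0)$, which blows up once $M(0)$ is large. Your sketch neither identifies which functional $M$ to use nor explains how the $cu^4$ term --- which in your 1D model sits in the \emph{stabilising} direction --- would be absorbed; in the paper's computation it cancels exactly between the $M'$ identity and the energy bound.
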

\begin{remark}
  Our proof (Section~\ref{sec:blow-up}) chooses $\Omega$ to be the annulus
   $B_{R_1}(0)\setminus B_{R_0}(0) \subset \R^2$ where $0 < R_0 < R_1$, and ``hedgehog'' type initial data.
  Namely, we choose $Q_0$ of the form
  \begin{equation*}
    Q(0) = \theta_0(|x|) \left(\frac{x}{|x|}\otimes \frac{x}{|x|}-\Id_2\right),
  \end{equation*}
  where $\theta_0: [R_0, R_1] \to \R$ is smooth.
  If $\theta_0$ is large enough, and $R_0$, $R_1$ are such that
  \begin{equation*}
    \frac{R_0^2\pi^2}{9(R_1 - R_0)^2}>1,
  \end{equation*}
  we show $\norm{Q(t)}_{L^2(\Omega)} \to \infty$ in finite time, for any smooth solution.
\end{remark}

Finally in Section~\ref{sec:physicality} we study how the flow distorts the convex hull
of eigenvalues, in an attempt to understand what is the maximal size of initial data that would give global well-posedness.
The situation is more interesting in $3D$ than in $2D$ as in $3D$ the convex set of physical $Q$-tensors cannot
be described just in terms of the Frobenius norm of the matrix.
We restrict ourselves to a simple setting (with specific assumptions on the elastic constants $L_i$'s, $i=1,2,3,4$  and work in the whole space).
Our main result in this section is the following:
\begin{proposition}\label{prop:physicality}
Let $Q(t,x)\in C([0,T];H^k(\RR^d))$ with $k>\frac{d}{2}$, $d=2,3$ and
arbitrary $T>0$ be a solution of the system \eqref{eqnQExpansion}--\eqref{eqnQICBC}, under assumptions \eqref{AssumptionsBulk}.
Assume further
\begin{itemize}
\item $L_1\not=0$, $L_4=0$ and \eqref{eqn2DCoercivity} holds if $d = 2$,
\item or $L_1\neq0$ and   $L_2+L_3=L_4=0$ if $d = 3$.
\end{itemize}
Suppose the initial data $Q_0\in H^k(\RR^d)$ is such that for any $x \in \RR^d$, the eigenvalues of $Q_0(x)$ are in the interval
\begin{gather*}
\Big[-\sqrt{\frac{|a|}{2c}},\sqrt{\frac{|a|}{2c}}\Big]
\quad\text{when } d=2,\\
\llap{or\qquad}
\Big[-\frac{b+\sqrt{b^2-24ac}}{12c},\frac{b+\sqrt{b^2-24ac}}{6c}\Big]
\quad\text{when } d= 3.
\end{gather*}
If $d=3$, we further assume
\begin{equation}\label{restriction:a}
|a|<\frac{b^2}{3c}.
\end{equation}
Then, for any $t\in [0,T]$ and $x\in\RR^d$, the eigenvalues of $Q(t,x)$ stay in the same interval.
\end{proposition}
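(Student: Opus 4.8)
The plan is to reduce the statement about eigenvalues lying in an interval to a scalar pointwise differential inequality, and then to apply a maximum-principle/invariant-region argument. Under the stated simplifying assumptions ($L_4=0$, and $L_2+L_3=0$ in the $3D$ case, while in $2D$ we only need $L_4=0$), the evolution \eqref{eqnQExpansion} collapses to a semilinear heat system of the form $\partial_t Q = 2L_1\Delta Q - aQ - c\,\tr(Q^2)Q$ (plus, in the $2D$ case, the terms $(L_2+L_3)(\partial_j\partial_k Q_{ik}+\partial_i\partial_k Q_{jk}) - (L_2+L_3)\partial_l\partial_k Q_{lk}\delta_{ij}$, which by \eqref{eqn2DCoercivity} still form an elliptic operator; one checks it acts diagonally enough, or one simply notes $L_2+L_3$ can be absorbed). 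The key structural point is that the nonlinearity $-aQ - c\,\tr(Q^2)Q$ is \emph{isotropic}: it commutes with $Q$, so it does not rotate the eigenframe, only rescales eigenvalues. This is exactly what makes an eigenvalue interval a candidate invariant region.

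First I would set up the scalar quantity to control. Let $I=[\alpha_-,\alpha_+]$ be the target interval (with $\alpha_\pm = \pm\sqrt{|a|/2c}$ for $d=2$, and the stated asymmetric interval for $d=3$). For the upper bound, consider $\varphi(t,x) = \lambda_{\max}(Q(t,x))$; since $Q\in C([0,T];H^k)$ with $k>d/2$, $Q$ is continuous (indeed Hölder) in $(t,x)$, and $\lambda_{\max}$ is a Lipschitz convex function of the matrix. I would either work with $\varphi$ directly via Rademacher/Danskin-type arguments (at a point where $\lambda_{\max}$ is simple it is smooth, and $\nabla^2$ of a simple eigenvalue has a sign-definite correction term that helps), or — cleaner — test the equation against the spectral projection onto the top eigenvector. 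The cleanest route: fix the largest eigenvalue $\alpha_+$ of the target interval and the function $f(s) = s - \alpha_+$; show that the set $\{Q : \lambda_{\max}(Q)\le \alpha_+\}$ is invariant by checking that on its boundary the vector field points inward. Concretely, suppose for contradiction that $M\defeq \sup_{[0,t_0]\times\RR^d}\lambda_{\max}(Q) $ first exceeds $\alpha_+$ at some $(t_0,x_0)$ (one must be slightly careful since $\RR^d$ is unbounded — but $Q(t,\cdot)\in H^k$ decays, so $\lambda_{\max}\to 0$ at spatial infinity and the sup is attained or approached in a compact set; alternatively run the argument on balls and pass to the limit). At such a first-touching point, spatial derivatives of the top eigenvalue vanish and the Laplacian term is $\le 0$, the coercive $L_2+L_3$ piece likewise contributes nonpositively, so
\[
\frac{d}{dt}\lambda_{\max} \le -a\,\alpha_+ - c\,\tr(Q^2)\,\alpha_+ \le \alpha_+\bigl(-a - c\,\alpha_+^2\bigr),
\]
using $\tr(Q^2)\ge \alpha_+^2$ when $\alpha_+>0$. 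With $\alpha_+ = \sqrt{|a|/2c}$ (so $c\alpha_+^2 = |a|/2$) and $a$ possibly negative, the bracket is $-a - |a|/2 \le -|a|/2 \le 0$ when $a\ge 0$, and $=|a| - |a|/2 = |a|/2$ when $a<0$ — wait, that is positive, so one needs the sharper bound $\tr(Q^2)\ge \alpha_+^2 + \lambda_{\min}^2$ and, crucially, the full interval constraint; this is where the precise endpoints $\pm\sqrt{|a|/2c}$ (and in $3D$ the roots of the cubic $cs^3 - \tfrac{b}{?}s^2 + \tfrac{a}{?}s$ coming from $\tr(Q^2)$ being bounded below on the eigenvalue simplex) enter. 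I would choose the interval endpoints exactly so that $s(-a - c\,\tr(Q^2)) \le 0$ whenever $s$ is an endpoint and the \emph{other} eigenvalues are in the interval, which pins down the formulas quoted. The lower bound $\lambda_{\min}\ge \alpha_-$ is symmetric for $d=2$; for $d=3$ the asymmetry of the interval reflects that the relevant bulk potential $\psi(s)=\tfrac a2 s^2 - \tfrac b3 s^3 + \tfrac c4 s^4$-type constraint is not even, and one uses the traceless condition $\lambda_1+\lambda_2+\lambda_3=0$ to reduce to a scalar cubic whose roots are the stated endpoints, with \eqref{restriction:a} ensuring these roots are real and ordered correctly.

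The main obstacle I anticipate is making the maximum-principle argument rigorous for the eigenvalue functions, which are only Lipschitz (not $C^2$) in $Q$, combined with the domain being all of $\RR^d$. I would handle the regularity by the standard device of perturbing: either add $\eps e^{Kt}$ to get a strict inequality and a genuine first-touching time, or use the viscosity-solution formulation, or — since we are told $Q$ is a classical-in-$H^k$ solution and $k>d/2$ gives enough regularity — diagonalize $Q(t_0,x_0)$ and test against the fixed projector $P_0$ onto the top eigenspace, using that $\tr(P_0 \partial_t Q) = \partial_t \lambda_{\max}$ at that point when the eigenvalue is simple, and a separate (easier) argument when it is degenerate. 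The spatial-infinity issue is handled by the $H^k$ decay of $Q$, or by first proving the result on large balls $B_R$ with the caveat that $Q$ restricted to $\partial B_R$ is small, then letting $R\to\infty$. Once the pointwise inequality "$\lambda_{\max}$ cannot cross $\alpha_+$" and its lower-bound analogue are established, the conclusion that the spectrum stays in $I$ for all $t\in[0,T]$ is immediate. I would also double-check that in the $2D$ case the extra $(L_2+L_3)$ second-order terms, which are not simply $c\Delta Q$, still satisfy a maximum principle for the eigenvalue — this follows because the operator $Q\mapsto 2L_1\Delta Q + (L_2+L_3)(\ldots)$ is, under \eqref{eqn2DCoercivity}, a strongly elliptic second-order operator acting componentwise in a way compatible with the ordering, or, if not, one invokes the extra hypothesis $L_4=0$ together with \eqref{eqn2DCoercivity} to rewrite it; this verification is the one genuinely fiddly computational step and I would relegate its details to a short lemma.
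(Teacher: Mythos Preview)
Your approach is genuinely different from the paper's. The paper does \emph{not} run a maximum-principle/invariant-region argument on the eigenvalues; instead it invokes a nonlinear Trotter product formula, splitting the flow into the heat semigroup $e^{2tL_1\Delta}$ and the bulk ODE $\dot S=-aS+b(S^2-\tfrac{1}{d}\tr(S^2)\Id)-c\tr(S^2)S$. It then shows separately that (i) the heat kernel, being a limit of convex combinations of Dirac masses, preserves the closed convex hull of the range of the initial data, and (ii) the ODE preserves the eigenvalue interval --- in $3D$ by diagonalising $S$ (the ODE commutes with orthogonal conjugation, so diagonal initial data stays diagonal) and analysing the resulting two-dimensional eigenvalue system directly. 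The Trotter approximants then converge in $H^k$ and the eigenvalue constraint passes to the limit. What the paper buys is that the PDE analysis is reduced to linear heat plus a finite-dimensional ODE, avoiding the regularity headaches you flag for $\lambda_{\max}$. What your route would buy, if completed, is a more direct argument that avoids the operator-splitting machinery.

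Two concrete points on your sketch. First, your worry about the $(L_2+L_3)$ terms in $2D$ dissolves: writing $Q=\begin{pmatrix}p&q\\q&-p\end{pmatrix}$, the full second-order operator in \eqref{eqnQExpansion} with $L_4=0$ collapses to $\zeta\Delta p$ and $\zeta\Delta q$ with $\zeta=2L_1+L_2+L_3>0$ (this is exactly equations~\eqref{p equ}--\eqref{q equ} in the paper), so there is no ``fiddly lemma'' needed --- it really is a scalar Laplacian. Second, in $3D$ your displayed inequality $\tfrac{d}{dt}\lambda_{\max}\le -a\alpha_+-c\tr(Q^2)\alpha_+$ omits the $b$-term $b(\alpha_+^2-\tfrac13\tr(Q^2))$, which is essential for the asymmetric interval. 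When you include it and use the tracelessness, you will find that if $\lambda_{\max}=\alpha_+$ then the constraint $\lambda_2,\lambda_3\ge\alpha_-=-\alpha_+/2$ together with $\lambda_2+\lambda_3=-\alpha_+$ \emph{forces} $\lambda_2=\lambda_3=\alpha_-$, so the first-touching point sits at a \emph{corner} of the invariant region where the nonlinear contribution vanishes exactly. The same happens in the paper's ODE analysis (the pairs $(-\tfrac{s_+}{3},\tfrac{2s_+}{3})$ and $(-\tfrac{s_+}{3},-\tfrac{s_+}{3})$ are stationary). Your $\eps$-perturbation will still close the argument, but you should be aware that the boundary is only weakly, not strictly, invariant.
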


The ``usual'' energy methods do not seem to yield Proposition~\ref{prop:physicality} in dimension $d=3$.
Instead we use a Trotter product formula and provide a somewhat atypical proof in section~\ref{sec:physicality}.

\subsection*{Notational Convention.}
We define $A:B \defeq \tr(A^tB)$ when $A,B$ are $d\times d$
matrices, and let $|Q|$  denote the Frobenius norm of the matrix $Q$
(i.e. $|Q| \defeq \sqrt{\tr (Q^tQ)}=\sqrt{\tr (Q^2)}$). We denote
the space of $Q$-tensors by $\mcS^{(d)}$, where
\begin{equation*}
  \mcS^{(d)} \defeq \{M\in\RR^{d\times d}, \,M=M^t,\,\tr(M)=0\},
\end{equation*}
and define the matrix valued $L^p$ space by
\begin{equation*}
  L^p(\Omega,\mcS^{(d)}) \defeq \{Q:\Omega\to\mcS^{(d)}, |Q|\in L^p(\Omega,\RR)\},\quad\text{when } 1 \leq p \leq \infty.
\end{equation*}
For the sake of simplicity, we let $\|\cdot\|$ (with no subscripts) to denote
$\|\cdot\|_{L^2(\Omega)}$.
We denote the partial derivative with
respect to $x_k$ of the $ij$ component of $Q$, by either $Q_{ij,k}$
or $\partial_k Q_{ij}$. Throughout the paper, we assume the Einstein summation
convention over the repeated indices.

\section{Global well-posedness for small initial data}\label{sec:well-posedness}
Using standard techniques the gradient flow structure of the
equation should provide apriori estimates for~\eqref{EquationQAbstract}
for smooth enough solutions. Taking the
(matrix) inner product of equation~\eqref{EquationQAbstract} with
$\frac{\delta \E}{\delta Q} - \lambda{I}+\mu-\mu^T$ and integrating
yields
$$
\frac{d}{dt} \E[Q] = -\int_{\Omega} \left| \frac{\delta \E}{\delta
Q}-\lambda{I}+\mu-\mu^T \right|^2\,dx.
$$
This gives the energy equality
\begin{equation}\label{eqnEE}
\E[Q(t)] + \int_0^t \int_\Omega  \left|\frac{\delta \E}{\delta
Q}-\lambda{I}+\mu-\mu^T  \right| ^2 \, dx \, ds
 = \E[Q(0)], \ \ \ \ \ \ \ \forall t>0.
\end{equation}

The main defect of the energy $\E[Q]$ is that it is unbounded from
below as $L_4\not=0$. Thus, unlike in the usual contexts, it does
not provide apriori control over the $H^1$ norm of $Q$. On the other
hand, if $\norm{Q}_{L^\infty}$ is small enough, then we can absorb
the cubic term into the three quadratic terms and force the elastic
part of the energy to be positive. The idea behind our proof is to
first prove \emph{preservation of smallness}: namely, if
$\norm{Q}_{L^\infty}$ is small enough initially, then it does not
increase with time. Now coercivity of the quadratic terms, and
smallness of the cubic term force the energy $\E[Q]$ to stay
positive, from which~\eqref{eqnEE} will provide an a~priori $H^1$
bound for $Q$. This will be enough to prove well-posedness of
\eqref{EquationQAbstract} (or equivalently
equation~\eqref{eqnQExpansion}).

\subsection{Preservation of smallness in \texorpdfstring{$L^\infty$}{L-infinity}}

The goal of this section is to prove Proposition~\ref{first
proposition on 2D small data} showing that $L^\infty$ smallness of
the initial data is preserved in time. This in turn implies that the
energy is positive definite and will allow us to obtain apriori
estimates on higher norms.

We begin by recalling a few well-known results that come directly
from Gagliardo-Nirenberg inequalities and elliptic PDE theory.
\begin{lemma} \label{def of C1 and C2} Suppose $\Omega$ is a smooth, bounded
domain in $\mathbb{R}^2$. There exists a positive constant $C_1 =
C_1(\Omega)$, such that for any $f\in H^2(\Omega)$ and $g\in
H^{\frac32}(\partial\Omega)$, with $f|_{\partial\Omega}=g$, we have
\begin{equation}\label{rel:estc12}
\|f\|_{L^\infty(\Omega)}\leq
C_1\|f\|^{\frac12}\big(\|\Delta{f}\|^{\frac12}+\|f\|^{\frac12}+
\|g\|_{H^{\frac32}}^{\frac12}\big), \ \ \|D^2f\| \leq
C_1\big(\|\Delta{f}\|+\|f\|+\|g\|_{H^{\frac32}(\partial\Omega)}
\big).
\end{equation}
Moreover, for any $f\in H^2(\Omega)$, we have the interpolation
estimate
\begin{equation} \label{rel:estc42}\|\nabla f\|_{L^4}^2\le
C_1\|f\|_{L^\infty}\left(\|\Delta f\|_{L^2}+\|f\|+
\|g\|_{H^{\frac32}}^{\frac12}\right).
\end{equation}
Finally, for $f\in H^1_0(\Omega)$, we have the Ladyzhenskaya
inequality \cite{LZ69}
 \begin{equation}
 \|f\|_{L^4(\Omega)}^2\le C\|\nabla f\|\|f\|.
\end{equation}
  \end{lemma}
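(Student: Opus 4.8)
All four inequalities are standard consequences of two ingredients: the $H^2$ a priori estimate for the Laplacian with inhomogeneous Dirichlet data, and two-dimensional Gagliardo--Nirenberg interpolation. The plan is to first establish the elliptic $H^2$ bound and then feed it into the interpolation inequalities.

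First I would prove the second estimate in \eqref{rel:estc12}. Let $w\in H^2(\Omega)$ be the harmonic extension of $g$, i.e.\ the solution of $\Delta w=0$ in $\Omega$ with $w|_{\partial\Omega}=g$; by the trace theorem and elliptic regularity on the smooth bounded domain $\Omega$ one has $\|w\|_{H^2(\Omega)}\le C(\Omega)\|g\|_{H^{3/2}(\partial\Omega)}$. Then $v=f-w$ lies in $H^2(\Omega)\cap H^1_0(\Omega)$ and solves $\Delta v=\Delta f$, so the a priori estimate for the Dirichlet Laplacian gives $\|v\|_{H^2}\le C\|\Delta v\|=C\|\Delta f\|$. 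The triangle inequality then yields $\|D^2 f\|\le\|D^2 v\|+\|D^2 w\|\le C(\|\Delta f\|+\|g\|_{H^{3/2}(\partial\Omega)})$, which implies the claimed bound (the term $\|f\|$ on the right is not actually needed, but is harmless and is kept for uniformity with the other inequalities). The key point here is that the splitting $f=w+v$ keeps only $\|\Delta f\|$ and the \emph{boundary} norm of $g$ on the right-hand side, avoiding a circular dependence on $\|f\|_{H^2}$.

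For the first estimate in \eqref{rel:estc12} I would invoke Agmon's inequality in dimension two, $\|f\|_{L^\infty(\Omega)}\le C(\Omega)\|f\|_{L^2}^{1/2}\|f\|_{H^2}^{1/2}$ --- itself a Gagliardo--Nirenberg inequality, valid on a smooth bounded domain after composing with a fixed bounded extension operator $H^2(\Omega)\to H^2(\RR^2)$. Substituting $\|f\|_{H^2}\le C(\|\Delta f\|+\|f\|+\|g\|_{H^{3/2}})$ from the previous step and using subadditivity of the square root, $\sqrt{\alpha+\beta+\gamma}\le\sqrt\alpha+\sqrt\beta+\sqrt\gamma$, gives exactly \eqref{rel:estc12}. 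Inequality \eqref{rel:estc42} is obtained in the same way, starting from the two-dimensional interpolation $\|\nabla f\|_{L^4(\Omega)}^2\le C(\Omega)\big(\|f\|_{L^\infty}\|D^2 f\|_{L^2}+\|f\|_{L^\infty}\|f\|_{L^2}\big)$ and again inserting the $H^2$ bound of the previous step. Finally, the Ladyzhenskaya inequality $\|f\|_{L^4}^2\le C\|\nabla f\|\,\|f\|$ for $f\in H^1_0(\Omega)$ is classical: extend $f$ by zero to $\RR^2$, apply the one-dimensional bound $\sup_s|F(s)|^2\le 2\int_{\RR}|F||F'|$ in each coordinate direction, and then Cauchy--Schwarz, as in \cite{LZ69}.

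None of these steps is difficult; the only thing requiring genuine care is the first one, namely extracting an $H^2$ estimate whose right-hand side does not secretly re-involve $\|f\|_{H^2}$. Everything else is bookkeeping --- most notably, checking that all the Gagliardo--Nirenberg and Agmon constants can be taken to depend only on $\Omega$, which is precisely why one reduces to the Euclidean inequalities via a fixed extension operator before interpolating.
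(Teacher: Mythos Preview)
Your proposal is correct and takes essentially the same approach as the paper, which simply cites interpolation inequalities (Adams--Fournier) together with elliptic regularity (Evans) for \eqref{rel:estc12}, and Gagliardo--Nirenberg plus the same elliptic regularity for \eqref{rel:estc42}. Your harmonic-extension decomposition is exactly the standard way to make the cited elliptic regularity estimate explicit, so there is no substantive difference between your argument and the paper's.
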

\br Further, for $f\in H^2(\Omega)\cap H_0^1(\Omega)$ the terms
$\|f\|$ and $\|g\|_{H^{\frac32}(\partial\Omega)}$ are not required
in \eqref{rel:estc12} and \eqref{rel:estc42}. This follows from
elliptic regularity (see for instance \cite{EV98}, Section $6.3.2$,
Thm. $4$ and remark $(i)$ afterwards).
 \er

The proofs of \eqref{rel:estc12}, follow from interpolation
inequalities (see for instance \cite[Theorems 5.2, 5.8]{AF03})
combined with the elliptic regularity~\cite[Theorem 6.3.2.4]{EV98}.
The estimate \eqref{rel:estc42} is a consequence of
Gagliardo-Nirenberg inequality (see for instance \cite{BR11}, p.313)
combined with the elliptic regularity result previously mentioned.

We can now provide the proof of Proposition~\ref{first proposition
on 2D small data}:

\begin{proof}[Proof of Proposition~\ref{first proposition on 2D small data}]
Due to the special structure of $Q$ in $2D$, we expand $Q$ as
\begin{eqnarray} Q(x, t)=\left(
                \begin{array}{cc}
                  p(x_1,x_2,t) & q(x_1,x_2,t) \\
                  q(x_1,x_2,t) & -p(x_1,x_2,t) \\
                \end{array}
            \right),
               \label{Q in matrix form}
 \end{eqnarray}
where $p, q$ are two scalar functions. Inserting \eqref{Q in matrix
form} into \eqref{eqnQExpansion}, we obtain the following
evolution equations for $p$ and $q$:
 \begin{align}
\frac{\partial{p}}{\partial{t}}=\zeta\Delta{p}&+L_4\big[(\partial_1p)^2-(\partial_1q)^2-(\partial_2p)^2
+(\partial_2q)^2+2\partial_1p\partial_2q+2\partial_2p\partial_1q \big] \non\\
&+2L_4(p\partial_1\partial_1p+2q\partial_1\partial_2p-p\partial_2\partial_2p)-ap-2c(p^2+q^2)p,
\label{p equ} \\
\frac{\partial{q}}{\partial{t}}=\zeta\Delta{q}&+2L_4\big[\partial_1q\partial_2q-\partial_1p\partial_2p+\partial_1p\partial_1q
-\partial_2p\partial_2q \big] \non\\
&+2L_4(p\partial_1\partial_1q+2q\partial_1\partial_2q-p\partial_2\partial_2q)-aq-2c(p^2+q^2)q,
\label{q equ} \\
p(x, 0)=p_0&(x), \ \ q(x)=q_0(x), \ \ p(x,
t)|_{\partial\Omega}=\tilde{p}(x), \ \
q(x,t)|_{\partial\Omega}=\tilde{q}(x).
 \label{eqnPQICBC}
 \end{align}
 Here, $\tilde{p}$ and $\tilde{q}$ are the corresponding components
 associated to $\tilde{Q}$
 and
\begin{equation}
  \zeta\defeq 2L_1+L_2+L_3> 0. \label{zeta}
\end{equation}
Note that positivity of $\zeta$ is a consequence of assumption
\eqref{eqn2DCoercivity}.

Define
\begin{equation}
\eta_1 \defeq \frac{\zeta^2}{(1+4\sqrt{2})^2L_4^2}>0. \label{def of
eta 1}
\end{equation}

Multiplying \eqref{p equ} with $p$, \eqref{q equ} with $q$, and
adding gives
\begin{align}
\frac12\frac{\partial{h}^2}{\partial{t}}
=\frac{\zeta}{2}&\Delta{h^2}-\zeta\big(|\nabla{p}|^2+|\nabla{q}|^2
\big)-L_4(p\partial_2\partial_2{h}^2-p\partial_1\partial_1{h}^2-2q\partial_1\partial_2h^2)
-ah^2-2ch^4
\non\\
&+L_4p\big[(\partial_2p)^2-(\partial_1p)^2-3(\partial_1q)^2+3(\partial_2q)^2+2\partial_1p\partial_2q+2\partial_2p\partial_1q
\big] \non\\
&+2L_4q\big[\partial_1p\partial_1q-3\partial_1p\partial_2p-\partial_2p\partial_2q-\partial_1q\partial_2q
\big], \label{h equation}
 \end{align}
 where
\begin{equation}\label{def:h} h(x_1, x_2, t) \defeq \sqrt{p^2+q^2}. \end{equation}
%%%%%%%%%%%%%%%%%%%%%%%%%%%%%%%%%%%%%%%%%%%%%%%%%%%%%%%%%%%%%%%%%%%%%%%%%%%%%%%%%%%%
Multiplying \eqref{h equation} by $(h^2-\eta_1)^{+}$ and integrating
gives
\begin{align}
 \frac14\frac{d}{dt}\int_{\Omega}|(h^2&-\eta_1)^+|^2dx \non\\
  =-\frac{\zeta}{2}&\int_{\Omega}|\nabla(h^2-\eta_1)^+|^2dx-\zeta\int_{\Omega}(h^2-\eta_1)^+\big(|\nabla{p}|^2
+|\nabla{q}|^2\big)dx+L_4\int_{\Omega}p|\partial_2(h^2-\eta_1)^+|^2dx
\non\\
&+L_4\int_{\Omega}\partial_2{p}\partial_2(h^2-\eta_1)^+ (h^2-\eta_1)^+
dx-L_4\int_{\Omega}p|\partial_1(h^2-\eta_1)^+|^2dx
\non\\
&-L_4\int_{\Omega}\partial_1{p}\partial_1(h^2-\eta_1)^+
(h^2-\eta_1)^+dx-2L_4\int_{\Omega}q\partial_2(h^2-\eta_1)^+\partial_1(h^2-\eta_1)^+dx
\non\\
&-2L_4\int_{\Omega}\partial_1q\partial_2(h^2-\eta_1)^+(h^2-\eta_1)^+dx-\int_{\Omega}2ch^2\Big(\frac{a}{2c}+h^2\Big)(h^2-\eta_1)^+dx
\non\\
&+L_4\int_{\Omega}p\big[(\partial_2p)^2-(\partial_1p)^2-3(\partial_1q)^2+3(\partial_2q)^2+2\partial_1p\partial_2q+2\partial_2p\partial_1q
\big](h^2-\eta_1)^+dx \non\\
&+2L_4\int_{\Omega}q\big[\partial_1p\partial_1q-3\partial_1p\partial_2p-\partial_2p\partial_2q-\partial_1q\partial_2q\big](h^2-\eta_1)^+dx
\non\\
=
-\frac{\zeta}{2}&\int_{\Omega}|\nabla(h^2-\eta_1)^+|^2dx-\zeta\int_{\Omega}(h^2-\eta_1)^+\big(|\nabla{p}|^2
+|\nabla{q}|^2\big)dx+I_1+\cdots+I_9. \label{deriv:maximumbound}
\end{align}
Above we used \eqref{eqnPQICBC},
\eqref{boundary:assumption} and integration by parts.

We estimate the terms $I_1$ through $I_9$ individually. Using the
Schwarz inequality and the fact $|p|+|q| \leq \sqrt{2p^2+2q^2}$, we
obtain
\begin{equation}
 I_1+I_3+I_5 \leq|L_4|\int_{\Omega}(|p|+|q|)|\nabla(h^2-\eta_1)^+|^2dx\leq\sqrt{2}|L_4|\int_{\Omega}h|\nabla(h^2-\eta_1)^+|^2dx.
\end{equation}
Also,
\begin{align}
I_2+I_4 &\leq
|L_4|\int_{\Omega}|\nabla{p}||\nabla(h^2-\eta_1)^+|(h^2-\eta_1)^+dx
\non\\&\leq
\frac{|L_4|}{2}\int_{\Omega}|(h^2-\eta_1)^+|^{\frac32}|\nabla{p}|^2dx
+\frac{|L_4|}{2}\int_{\Omega}|(h^2-\eta_1)^+|^{\frac12}|\nabla(h^2-\eta_1)^+|^2dx
\non\\
&\leq \frac{|L_4|}{2}\int_{\Omega}h(h^2-\eta_1)^+|\nabla{p}|^2dx
+\frac{|L_4|}{2}\int_{\Omega}h|\nabla(h^2-\eta_1)^+|^2dx.
\end{align}
Similarly,
\begin{equation}
I_6\leq|L_4|\int_{\Omega}h(h^2-\eta_1)^+|\nabla{q}|^2dx+|L_4|\int_{\Omega}h|\nabla(h^2-\eta_1)^+|^2dx.
\end{equation}
Furthermore, assumption~\eqref{small coefficient assumption} implies
\begin{equation}
I_7 \leq \int_{\Omega}2ch^2\Big(\frac{|a|}{2c}-h^2
\Big)(h^2-\eta_1)^{+}dx \leq 0. \end{equation} Finally, using the
Cauchy-Schwarz inequality and $|p|+|q|\leq \sqrt{2p^2+2q^2}$ again,
we get \begin{equation} I_8+I_9 \leq
4\sqrt{2}|L_4|\int_{\Omega}h(|\nabla{p}|^2+|\nabla{q}|^2)(h^2-\eta_1)^+dx.
\end{equation}
%%%%%%%%%%%%%%%%%%%%%%%%%%%%%%%%%%%%%%%%%%%%%%%%%%%%%%%%%%%%%%%%%%%%%%%%%%%%%%%%%%%%%%
Combining the above we get
\begin{align}
 \frac14\frac{d}{dt}\int_{\Omega}|(h^2-&\eta_1)^+|^2dx
\non\\
\leq
\frac12\int_{\Omega}&\left[\big(3+2\sqrt{2}\big)|L_4|h-\zeta\right]|\nabla(h^2-\eta_1)^+|^2dx
\non\\
&+\int_{\Omega}\left[\big(1+4\sqrt{2}\big)|L_4|h-\zeta\right](|\nabla{p}|^2+|\nabla{q}|^2)(h^2-\eta_1)^+dx.
\label{equation for h square}
\end{align}
Note that $3+2\sqrt{2} < 1+4\sqrt{2}$, hence if we assume at initial
time
\begin{equation}
|Q_0|=\sqrt{2(p_0^2+q_0^2)}=\sqrt{2}h_0<\frac{\sqrt{2}\zeta}{(1+4\sqrt{2})|L_4|},
\end{equation}
then it follows from \eqref{equation for h square} that
$$
\frac{d}{dt}\int_{\Omega}|(h^2-\eta_1)^+|^2dx \leq 0, \ \ \forall t
\in (0, T),
$$
 which concludes the proof.
\end{proof}

\begin{remark}
For $L_4=0$ the previous result is to be expected, as the energy is
just the usual Dirichlet type energy, up to a null-Lagrangian (see
\eqref{2d coercive part} in the Appendix). The unexpected aspect
captured by the Lemma is that through the gradient type evolution,
the coercive part of the energy manages to control the size of the
badly behaved cubic term that is present for $L_4\not=0$.
\end{remark}

\subsection{Apriori Estimates for Higher Norms.}

For small data, Proposition~\ref{first proposition on 2D small data}
shows that the $L^\infty$ smallness is preserved. Consequently, this
will imply coercivity of the second order terms and positivity of
the energy $\mathcal E$. The main result of this section uses this
and the dissipative energy law~\eqref{eqnEE} to apriori control
higher order norms of $Q$.
%%%%%%%%%%%%%%%%%%%%%%%%%%%%%%%%%%%%%%%%%%%%%%%%%%%%%%%%%%%%%%%%%%%%%%%%%%%%%%%%%%%%%%%%%%%%%%%%%
\begin{proposition}\label{proposition on 2D lower bound}
For $\Omega\subset\RR^2$ smooth and bounded, there exists an
$\eta_2>0$ depending on $L_i,i=1,2,3,4$ and $\Omega$ so that if:
\begin{align}
&  Q_0 \in H^1(\Omega)\cap L^\infty(\Omega),\, \tilde{Q}\in H^\frac32(\partial\Omega),\non\\
& \|Q_0\|_{L^\infty(\Omega)} \leq \sqrt{2\eta_2},
 \quad\text{and}\quad
 \frac{|a|}{2c} \leq \eta_2,
\end{align}
 then under the coercivity condition \eqref{eqn2DCoercivity} and structural assumptions \eqref{AssumptionsBulk},
 for any $T>0$, and any smooth solution $Q$ of \eqref{eqnQExpansion}--\eqref{eqnQICBC} we have
$$
  \norm{Q}_{L^\infty(0, T; H^1(\Omega))} \leq C
  \quad\text{and}\quad
  \norm{Q}_{L^2(0, T; H^2(\Omega))} \leq C,
$$
for some constant $C$ depending on $T$, $\eta_2$, $\norm{Q_0}_{H^{1}}$ and $\norm{\tilde Q}_{H^{3/2}(\del\Omega)}$.
\end{proposition}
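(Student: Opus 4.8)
The plan is to run a standard energy argument for the gradient flow, using Proposition~\ref{first proposition on 2D small data} to control the dangerous cubic nonlinearity and the energy equality~\eqref{eqnEE} to get the first $H^1$ bound. Concretely, I would first fix $\eta_2 \le \eta_1$ small enough (to be determined in the course of the proof) so that whenever $\norm{Q}_{L^\infty} \le \sqrt{2\eta_2}$ the quadratic elastic terms dominate the cubic term $L_4 Q_{lk}\partial_kQ_{ij}\partial_lQ_{ij}$ pointwise: that is, there is a $\delta>0$ with
\begin{equation*}
 \mathcal F\el(Q) \ge \delta |\nabla Q|^2
 \quad\text{whenever } |Q| \le \sqrt{2\eta_2},
\end{equation*}
which is possible because~\eqref{eqn2DCoercivity} makes the quadratic part coercive (Lemma~\ref{lmaPositiveEnergy}) and $|L_4 Q_{lk}\partial_kQ_{ij}\partial_lQ_{ij}| \le |L_4|\,|Q|\,|\nabla Q|^2$. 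Since $\mathcal F\bulk = \frac a2\tr(Q^2) + \frac c4 \tr^2(Q^2) \ge -\frac{a^2}{4c}$ is bounded below, Proposition~\ref{first proposition on 2D small data} applied with $\eta_1 = \eta_2$ gives $\norm{Q(t)}_{L^\infty} \le \sqrt{2\eta_2}$ for all $t$, and then~\eqref{eqnEE} yields
\begin{equation*}
 \delta\int_0^t\!\!\int_\Omega |\nabla Q|^2 \,dx\,ds \le \mathcal E[Q(t)] + \tfrac{|a|^2}{4c}|\Omega|\,t \le \mathcal E[Q_0] + \tfrac{|a|^2}{4c}|\Omega|\,t.
\end{equation*}
Combined with an $L^2$ bound on $Q$ obtained directly from multiplying the equation by $Q$ and integrating (or from the $L^\infty$ bound and finiteness of $|\Omega|$), this gives $Q \in L^\infty(0,T;L^2) \cap L^2(0,T;H^1)$; but to upgrade to $L^\infty(0,T;H^1)$ I need to revisit the boundary data — since $\tilde Q \ne 0$ in general, I would split $Q = \hat Q + \bar Q$ where $\bar Q$ is a fixed $H^{3/2}$-harmonic (or time-independent) extension of $\tilde Q$ and $\hat Q$ has zero trace, so the integrations by parts in the energy estimate produce no boundary terms and the Ladyzhenskaya inequality of Lemma~\ref{def of C1 and C2} applies to $\hat Q$.

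For the $L^\infty(0,T;H^1)$ and $L^2(0,T;H^2)$ bounds the plan is to test~\eqref{eqnQExpansion} against $-\Delta \hat Q$ (equivalently, differentiate and test against $\nabla Q$) and produce a differential inequality for $\frac{d}{dt}\norm{\nabla Q}^2$. The good term is the leading $2L_1\Delta Q$, which upon this testing gives $+2L_1\norm{\Delta \hat Q}^2$ plus lower order; combined with the $(L_2+L_3)$ second-order terms and the $2L_4 Q_{lk}\partial_l\partial_k Q_{ij}$ term — which, by the $L^\infty$ smallness of $Q$ from Proposition~\ref{first proposition on 2D small data}, contributes something absorbable of the form $C\eta_2^{1/2}\norm{D^2 Q}^2$ — one gets a net coercive $\kappa\norm{D^2 Q}^2$ on the right-hand side provided $\eta_2$ is chosen small enough (here I again use the coercivity~\eqref{eqn2DCoercivity} for the principal part of the second-order operator, via Lemma~\ref{lmaPositiveEnergy} applied to the quadratic form in $D^2 Q$, or directly via Lemma~\ref{def of C1 and C2} to pass between $\norm{\Delta \hat Q}$ and $\norm{D^2\hat Q}$). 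The remaining terms to control are: (i) the linear term $aQ$ and cubic bulk term $c\,\tr(Q^2)Q$, both easily bounded using $\norm{Q}_{L^\infty}\le\sqrt{2\eta_2}$ and the $L^2$ bound on $Q$; and (ii) the quadratic-gradient terms $L_4\partial_l Q_{ij}\partial_k Q_{lk}$, $L_4\partial_i Q_{kl}\partial_j Q_{kl}$, $\tfrac{L_4}{2}|\nabla Q|^2\delta_{ij}$, which are the genuinely troublesome terms.

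The main obstacle is exactly step (ii): terms like $\int_\Omega L_4 |\nabla Q|^2 \,\Delta \hat Q\,dx$. These I would estimate by
\begin{equation*}
 \Big| \int_\Omega L_4 |\nabla Q|^2 \Delta \hat Q \,dx \Big|
 \le |L_4|\,\norm{\nabla Q}_{L^4}^2 \norm{\Delta \hat Q}
 \le C\,\norm{Q}_{L^\infty}\big(\norm{\Delta \hat Q} + \norm{Q} + \norm{\tilde Q}_{H^{3/2}}^{1/2}\big)\norm{\Delta\hat Q}
\end{equation*}
using precisely the interpolation estimate~\eqref{rel:estc42} of Lemma~\ref{def of C1 and C2}; the crucial point is that the prefactor $\norm{Q}_{L^\infty} \le \sqrt{2\eta_2}$ is small, so after a Young's inequality this whole term can be absorbed into $\tfrac\kappa2\norm{D^2 Q}^2$ at the cost of lower-order terms in $\norm{\nabla Q}^2$ and constants depending on $\norm{\tilde Q}_{H^{3/2}}$. (An alternative for these quadratic-gradient terms is to write $\norm{\nabla Q}_{L^4}^2 \le C\norm{\nabla\hat Q}\norm{\nabla\hat Q}_{H^1}$ via Ladyzhenskaya plus handling of $\bar Q$; either way one needs the smallness to close.) Once all terms are grouped, one arrives at
\begin{equation*}
 \frac{d}{dt}\norm{\nabla Q}^2 + \kappa\norm{D^2 Q}^2 \le C\big(1 + \norm{\nabla Q}^2\big),
\end{equation*}
with $C$ depending on $\eta_2$, $\norm{Q_0}_{H^1}$, $\norm{\tilde Q}_{H^{3/2}(\partial\Omega)}$ (through the fixed extension $\bar Q$), and $\norm{Q}_{L^\infty((0,T)\times\Omega)}$ which is under control by the earlier steps; Gr\"onwall then gives the $L^\infty(0,T;H^1)$ bound, and integrating the differential inequality in time gives the $L^2(0,T;H^2)$ bound, with the stated dependence of $C$ on $T$. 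One final bookkeeping point: since the estimates are performed on $\hat Q$, I translate back to $Q = \hat Q + \bar Q$ using that $\bar Q$ is a fixed $H^{3/2}(\partial\Omega)$-controlled, time-independent function, which only changes constants.
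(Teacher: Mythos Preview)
Your overall strategy is viable but it diverges from the paper in two places, and the first of these contains a slip worth flagging.

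\emph{The energy step.} Your displayed inequality
\[
 \delta\int_0^t\!\!\int_\Omega |\nabla Q|^2 \,dx\,ds \le \mathcal E[Q(t)] + \tfrac{|a|^2}{4c}|\Omega|\,t
\]
is not what the energy law gives. The identity~\eqref{eqnEE} is pointwise in $t$: combined with the coercivity $\mathcal E[Q(t)] \ge \tilde\eta\|\nabla Q(t)\|^2 - \tfrac{a^2}{4c}|\Omega|$ (valid once $\|Q(t)\|_{L^\infty}\le\sqrt{2\eta_2}$), it yields $\|\nabla Q(t)\|^2 \le C$ for every $t$, i.e.\ $Q\in L^\infty(0,T;H^1)$ directly, with no boundary splitting or Gr\"onwall needed. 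The paper does exactly this in two lines. Your detour through $L^2(0,T;H^1)$ and the subsequent upgrade is unnecessary.

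\emph{The $H^2$ step.} Here the approaches genuinely differ. You test against $-\Delta\hat Q$, absorb the bad second-order and quadratic-gradient terms using the $L^\infty$ smallness and the interpolation~\eqref{rel:estc42}, and close by Gr\"onwall. This works. The paper instead exploits the second piece of information in~\eqref{eqnEE}, namely $\partial_t Q\in L^2(0,T;L^2)$ from the dissipation term, and then simply rearranges the scalar equations~\eqref{p equ}--\eqref{q equ} as $\zeta\Delta p = p_t - (\text{rest})$, takes $L^2$ norms, and bounds the right side by $\|Q_t\| + 12|L_4|C_1\|h\|_{L^\infty}(\|\Delta p\|+\|\Delta q\|) + C$ using exactly the same interpolation~\eqref{rel:estc42}. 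Smallness of $\|h\|_{L^\infty}\le\sqrt{\eta_2}$ lets the $\Delta$-terms be absorbed, giving $\|\Delta Q\|\le C\|Q_t\| + C$ pointwise in $t$, hence $L^2(0,T;H^2)$ after squaring and integrating. This avoids the splitting $Q=\hat Q+\bar Q$, avoids Gr\"onwall, and makes the role of the dissipation transparent. Your route buys nothing extra here and is heavier; the paper's is the cleaner argument.
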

\begin{proof}
As mentioned earlier, the assumption~\eqref{eqn2DCoercivity} guarantees coercivity of the linear terms in 2D and quantitatively gives
\begin{equation}\label{eqn2DCoercivity1}
  \big(L_1|\nabla{Q}|^2+L_2\partial_jQ_{ik}\partial_kQ_{ij}
  +L_3\partial_jQ_{ij}\partial_kQ_{ik}\big)(x) \geq \nu|\nabla{Q}|^2(x),
\end{equation}
where
\begin{equation*}
  \nu\defeq \min\{L_1+L_2, L_1+L_3\}>0.
\end{equation*}
For continuity we prove this in Lemma~\ref{lmaPositiveEnergy} in Appendix~\ref{sxnCoercivity} below, and refer
 the reader to~\cite{LMT87,TE98} for the three dimensional analog.

Now define
\begin{equation}
\eta_1 \defeq \frac{\zeta^2}{(1+4\sqrt{2})^2L_4^2},
\qquad\text{and}\qquad
\eta_2 \defeq \frac{1}{60}\min\left\{\frac{\nu^2}{8L_4^2},
\frac{\zeta^2}{144L_4^2C_1^2},
\eta_1 \right\}, \label{def of eta 2}
\end{equation}
where $C_1$ is the constant appearing in Lemma~\ref{def of C1 and C2}.

To begin, an argument analogous to the proof of Proposition \ref{first proposition on 2D small data} gives
$$
  \|Q(t)\|_{L^\infty(\Omega)} \leq \sqrt{2\eta_2}, \qquad \forall t
  \in [0, T].
$$

Next, we infer from the basic energy law \eqref{eqnEE}, Lemma
\ref{lmaPositiveEnergy},
 that there exists $ \tilde{\eta}=\nu-2|L_4|\sqrt{2\eta_2}>0$, such that
\begin{align*}
\mathcal{E}(Q_0) &\geq \mathcal{E}(Q(t)) \geq
\int_{\Omega}\nu|\nabla{Q}|^2+L_4Q_{lk}\partial_lQ_{ij}\partial_kQ_{ij}
+\frac{a}{2}\tr(Q^2)+\frac{c}{4}\tr^2(Q^2) \,dx \\
&\geq \int_{\Omega}\nu|\nabla{Q}|^2\,dx-|L_4|\|Q\|_{L^\infty(\Omega)}\|\nabla{Q}\|^2
+\frac{c}{4}\int_{\Omega}\left\{\big[\tr(Q^2)+\frac{a}{c}\big]^2-\frac{a^2}{c^2}\right\}\,dx    \\
&\geq \tilde{\eta}\|\nabla{Q}(t)\|^2-\frac{a^2}{4c}|\Omega|.
\end{align*}
Hence $Q \in L^\infty(0, T; H^1(\Omega))$. Furthermore, it follows
from the basic energy law \eqref{eqnEE} and equation
\eqref{eqnQExpansion} that
$$
  Q_t \in L^2(0, T;L^2(\Omega)).
$$
By Lemma \ref{def of C1 and C2},  Proposition~\ref{first proposition
on 2D small data} and Cauchy-Schwarz inequality, we deduce from
\eqref{p equ} and \eqref{q equ} that
\begin{align}
\zeta\|\Delta{p}&(t)\| \non\\
\leq &\|p_t\|+|L_4|\|(\partial_1p)^2-(\partial_1q)^2-(\partial_2p)^2
+(\partial_2q)^2+2\partial_1p\partial_2q+2\partial_2p\partial_1q\| \non\\
&\quad+2|L_4|\|p\partial_1\partial_1p+2q\partial_1\partial_2p-p\partial_2\partial_2p\|+\|ap+2c(p^2+q^2)p\|
\non\\
\leq&\|p_t\|+2|L_4|\|(\partial_1p)^2+(\partial_2p)^2+(\partial_1q)^2+(\partial_2q)^2\|
+2|L_4|\|p\|_{L^\infty(\Omega)}\|\partial_1\partial_1p\| \non\\
&\quad+4|L_4|\|q\|_{L^\infty(\Omega)}\|\partial_1\partial_2p\|
+2|L_4|\|p\|_{L^\infty(\Omega)}\|\partial_2\partial_2p\|+C \non\\
\leq&\|p_t\|+2|L_4|C_1\Big[\|p\|_{L^\infty}\big(\|\Delta{p}\|+\|p\|+\|\tilde{p}\|_{H^\frac32(\partial\Omega)}
\big)
+\|q\|_{L^\infty}\big(\|\Delta{q}\|+\|q\|+\|\tilde{q}\|_{H^\frac32(\partial\Omega)} \big) \Big]\non\\
&\quad+8|L_4|C_1\|h\|_{L^\infty}\big(\|\Delta{p}\|+\|p\|+\|\tilde{p}\|_{H^\frac32(\partial\Omega)} \big)+C \non\\
\leq&\|p_t\|+|L_4|C_1\|h\|_{L^\infty}\big(10\|\Delta{p}\|+2\|\Delta{q}\|
\big)+C. \label{boundary used}
\end{align}

Here $C$ depends on $\Omega$, $Q_0$,
$\tilde{Q}$, and the coefficients of the system. Analogously, we
know
\begin{equation}
\zeta\|\Delta{q}(t)\|\leq\|q_t\|+|L_4|C_1\|h\|_{L^\infty}\big(10\|\Delta{q}\|+2\|\Delta{p}\|\big)+C.
\end{equation}
After summing up, we get
\begin{equation}\label{eq:LaplacDominatedEst}
\zeta\big(\|\Delta{p}(t)\| +\|\Delta{q}(t)\| \big)\leq\|Q_t\|
+12|L_4|C_1\|h\|_{L^\infty}\big(\|\Delta{q}\|+\|\Delta{p}\| \big)+C,
\end{equation}
which yields the bound of $\|\Delta{Q}\|$ in $L^2(0, T)$, due to the
choice of $\eta_2$ and the fact $\|h\|_{L^\infty(0, T; \Omega)}\leq
\sqrt{\eta_2}$.
\end{proof}

\begin{remark}\label{remark:60constant}
The factor $\frac{1}{60}$ in \eqref{def of eta 2} is not used the
proof above. However, it will be necessary in the proof of Theorem
\ref{thm2Dgexist}, part (i), as described in the discussion before
\eqref{est:H2potential2}.
\end{remark}

\subsection{Weak Solutions}

The purpose of this section is to show that the apriori estimates previously established are enough to
show global existence and uniqueness of weak solutions for small initial data.
While this is usually standard, the nonlinearity appearing in the higher order terms makes things complicated in our situation.
Specifically, we crucially need $\norm{Q}_{L^\infty}$ to be small in order to obtain coercivity of the second order terms.
Thus any approximating scheme devised to prove the existence of weak solutions must preserve $L^\infty$ smallness of the initial data.
Since $Q$ is a $2 \times 2$ matrix we don't have the luxury of a maximum principle that apriori preserves $\norm{Q}_{L^\infty}$,
and the approximating scheme must be constructed carefully.
We carry out this construction below.

We begin by recalling the definition of weak solutions in our context.

\begin{definition}\label{def of weak solution}

For any $T\in (0,+\infty)$, a function $Q$ satisfying
$$
   Q \in L^\infty(0, T; H^1\cap L^\infty) \cap L^2(0, T; H^2),\quad
   \partial_t Q \in L^2(0, T; L^2),
   \quad\text{and}\quad
   Q \in S^{(2)}, \quad \text{a.e. in } \ \Omega\times (0, T),
$$
is called a weak solution of the problem \eqref{p equ}-\eqref{q
equ}, if it satisfies the initial and boundary conditions~\eqref{eqnPQICBC}, and we have
\begin{align*}
-\int_{\Omega\times [0,T]} &Q: \partial_t R\,dx \, dt\\
=&-2L_1\int_{\Omega\times [0,T]}\partial_kQ:\partial_kR\,dx \, dt-\int_{\Omega\times [0,T]}\big[a+c\tr(Q^2)\big]Q:R\,dx \, dt\\
&\quad-2(L_2+L_3)\int_{\Omega\times
[0,T]}\partial_kQ_{ik}\partial_jR_{ij}\,dx \, dt
+(L_2+L_3)\int_{\Omega\times
[0,T]}\partial_kQ_{lk}\partial_lR_{ii}\,dx \, dt\\
&\quad-2L_4\int_{\Omega\times [0,T]}
Q_{lk}\partial_kQ_{ij}\partial_lR_{ij}\,dx \, dt
-L_4\int_{\Omega\times [0,T]}\partial_iQ_{kl}\partial_jQ_{kl}R_{ij}\,dx \, dt \\
&\quad+\frac{L_4}{2}\int_{\Omega\times [0,T]}|\nabla{Q}|^2R_{ii}\,dx \, dt-\int_\Omega Q_0:R(0)\,dx.
\end{align*}
Here $R\in C_c^\infty([0,T)\times \Omega,M^{2\times 2}(\mathbb{R}))$ is arbitrary.%
\end{definition}
\begin{remark}
The notion of weak solution above is similar to the one considered
in \cite[Definition~3.2, Remark~4]{SL09} for a related system. The
main difference in our situation is the regularity requirement on
$R$. The more standard requirement would be that $R\in
H^1_0(\Omega)$, however, because of the presence of the cubic term
we need a stronger assumption. A similar situation occurs in
\cite{CR13}, where test functions are taken in a smaller space than
$H^1_0(\Omega)$ because of the presence of similar terms. For
simplicity we take here only smooth functions although a larger
class of test function should still work for obtaining existence of
solutions.
\end{remark}

\begin{proof}[Proof  of Theorem~\ref{thm2Dgexist}, part {\it (i)}]
For simplicity we only consider the homogeneous boundary problem.
The analysis of the corresponding inhomogeneous boundary condition is similar but involves many lengthy computations which obscure the heart of the matter.
We start by augmenting \eqref{p equ}-\eqref{q equ}, \eqref{eqnQExpansion} with a singular potential.
Explicitly, consider the system
\begin{align}
\frac{\partial{p}}{\partial{t}}&=\zeta\Delta{p}-\eps \frac{\partial
f}{\partial
p}(p,q)+L_4\big[(\partial_1p)^2-(\partial_1q)^2-(\partial_2p)^2
+(\partial_2q)^2+2\partial_1p\partial_2q+2\partial_2p\partial_1q \big] \non\\
&\qquad+2L_4(p\partial_1\partial_1p+2q\partial_1\partial_2p-p\partial_2\partial_2p)-ap-2c(p^2+q^2)p,
\label{p equ+} \\
\frac{\partial{q}}{\partial{t}}&=\zeta\Delta{q}-\eps\frac{\partial
f}{\partial
q}(p,q)+2L_4\big[\partial_1q\partial_2q-\partial_1p\partial_2p+\partial_1p\partial_1q
-\partial_2p\partial_2q \big] \non\\
&\qquad+2L_4(p\partial_1\partial_1q+2q\partial_1\partial_2q-p\partial_2\partial_2q)-aq-2c(p^2+q^2)q,
\label{q equ+}
\end{align}
with initial data
\begin{equation}
p(x, 0) =p_0^\eps(x), \qquad q(x)=q_0^\eps(x), \quad\forall
x\in\Omega\label{IC+}
\end{equation}
and boundary conditions
\begin{equation}
p(x, t) =0, \qquad q(x,t)=0, \quad\forall x\in\partial\Omega
\label{BC+}.
\end{equation}
Here $p_0^\eps, q_0^\eps\in C^\infty(\Omega)\cap H^1_0(\Omega)$ are such that
$$
 p_0^\eps\to p_0
 \quad\text{and}\quad
 q_0^\eps\to q_0
 \quad\text{in } H^1(\Omega)\cap L^\infty(\Omega),
$$
and $f(p,q)$ is the singular potential%
\footnote{ Let us note that this choice of the singular
potential ensures that the system thus obtained satisfies the
symmetry and tracelesness constraints. The partial derivatives will
only make sense for solutions of finite energy, hence  such that
$p^2+q^2<4\eta_2$ a.e. so that we are in the effective domain of the
convex potential $f$}
$$
  f(p,q)=\left\{\begin{array}{ll}
     -\ln(4\eta_2-p^2-q^2) &\textrm{ if }p^2+q^2<4\eta_2\\
   \infty &\textrm{ if }p^2+q^2\ge 4\eta_2.\end{array}\right.
$$ (where $\eta_2$ is as defined in \eqref{def of eta 2}).
The advantage of this approximating system is that it has a singular
potential term in its energy:
 \begin{align}\label{def:energyEeps}
\mathcal{E}^\eps[p,q]&\defeq \int_\Omega \zeta\big(|\nabla
p|^2+|\nabla q|^2\big)+\eps f(p,q)\,dx
+2L_4\int_\Omega{p}\big(|\partial_1p|^2-|\partial_2p|^2\big)\,dx\non\\
&\qquad
+4L_4\int_\Omega{q}(\partial_1p\partial_2p+\partial_1q\partial_2q)\,dx+\int_\Omega
a(p^2+q^2)+c(p^2+q^2)^2\,dx.
\end{align}
Hence finite energy will imply $p^2+q^2\le 4\eta_2$ almost
everywhere. We will then prove an additional preservation of
smallness principle for the approximate system \eqref{p
equ+}-\eqref{BC+}. Namely, we will show the stronger $L^\infty$
bound $p^2+q^2\le \eta_2$ almost everywhere in time and space, provided this is true
initially, at $t=0$. Thus we will be able to conclude that  the terms coming
from the singular potential become uniformly small and disappear in
the limit $\eps\to 0$.

In order to obtain the existence of the approximate system \eqref{p
equ+}-\eqref{BC+}, we regularize the singular potential $f$ and
construct an approximating sequence using the Galerkin method. To
regularize the singular potential we use an  approximating sequence
of functions $f_N:\RR^2\to \RR$ that satisfy the following
properties:
\begin{enumerate}
\item $f_N:\RR^2\to \RR$ is $C^\infty$ and convex,
\item There exists a constant $\alpha\in\RR$ such that
 \begin{equation}\label{properties:fapprox}
   -\alpha^2\le f_N(p,q), \forall p,q\in \RR\textrm{ and }\forall N\ge
1,
  \end{equation}
\item $f_N\le f_{N+1}\le f$ on $\RR^2$ for all $N\in\NN$,
\item $f_N\to f$ in $C^k(D(f))$ as $N\to\infty$ (where $D(f)$ is the domain of $f$, namely $D(f):=\{(p,q)\in\RR^2; p^2+q^2<4\eta_2\}$).
\end{enumerate}
A similar construction was carried out in~\cite{MW12} using Moreau-Yosida approximation and a suitable smoothing,
and we refer the reader to~\cite{MW12} for the details.

For the Galerkin approximation, let $\{\varphi_1,\dots,\varphi_n,\dots\}$ be an orthonormal basis of $L^2(\Omega)$ consisting of eigenvectors of the
Laplacian (with zero Dirichlet boundary conditions).
Let
$\mathcal{P}_m:L^2\to H_m$ where
$H_m\defeq\textrm{span}\{\varphi_1,\dots,\varphi_m\}$.
Consider  the finite dimensional system
\begin{align}
\frac{\partial{p_m}}{\partial{t}}=&\zeta\Delta{p_m}-\eps
\mathcal{P}_m\Big\{\frac{\partial f_N}{\partial p}(p_m,q_m)\Big\}
+L_4\mathcal{P}_m\big\{(\partial_1p_m)^2-(\partial_1q_m)^2-(\partial_2p_m)^2\big\}\non\\
&+L_4\mathcal{P}_m\big\{(\partial_2q_m)^2+2\partial_1p_m\partial_2q_m+2\partial_2p_m\partial_1q_m\big\}\non\\
&+2L_4\mathcal{P}_m\big\{p_m\partial_1\partial_1p_m+2q_m\partial_1\partial_2p_m-p_m\partial_2\partial_2p_m-ap_m-2c(p_m^2+q_m^2)p_m\big\}
\label{p equ++} \\
\frac{\partial{q_m}}{\partial{t}}=&\zeta\Delta{q_m}-\eps
\mathcal{P}_m\Big\{\frac{\partial f_N}{\partial q}(p_m,q_m)\Big\}
+2L_4\mathcal{P}_m\big\{\partial_1q_m\partial_2q_m-\partial_1p_m\partial_2p_m\big\}\non\\
&+2L_4\mathcal{P}_m\big\{\partial_1p_m\partial_1q_m-\partial_2p_m\partial_2q_m\big\} \non\\
&+2L_4\mathcal{P}_m\big\{p_m\partial_1\partial_1q_m+2q_m\partial_1\partial_2q_m-p_m\partial_2\partial_2q_m-aq_m-2c(p_m^2+q_m^2)q_m\big\}
\label{q equ++}
\end{align}
with initial conditions
\begin{equation}
p_m(x, 0) =(\mathcal{P}_m p_0^\eps)(x),
\qquad
q(x)=(\mathcal{P}_mq_0^\eps)(x), \quad
\forall x\in\Omega \label{IC++}
\end{equation}

The above system depends on three parameters: $\eps$, $m$ and $N$.
For simplicity we drop the explicit dependence on $\eps$ and $N$
from the notation, and only keep the subscript $m$ in the solutions
$p_m,q_m$. We will first send $N\to\infty$ and then $m\to\infty$ to
obtain solutions to the approximate continuous system  \eqref{p
equ+}--\eqref{q equ+}. Finally we will pass to the limit $\eps\to
0$. We divide the remainder of the proof into three steps.

\bigskip
\step{Step 1: Sending $N\rightarrow\infty$.}
We look for solutions of the form
$$
p_m(t,x)\defeq \sum_{i=1}^m a^i_m(t)\varphi_i(x),\quad
q_m(t,x)=\sum_{i=1}^m b^i_m(t)\varphi_i(x)
$$
The existence of solutions for short time is a consequence of the
standard Cauchy-Peano local existence theory for systems of ordinary
differential equations. The bounds \eqref{est:iteratedinftyN}
obtained below will suffice for showing that the existence of the
system holds for arbitrary intervals of time.

Note that for $\eps>0$ small enough we have
$(p_0^\eps)^2+(q_0^\eps)^2<2\eta_2$ almost everywhere.
Since for $m\to\infty$ we have  $\mathcal{P}_mp_0^\eps\to p_0^\eps$ in $H^2\hookrightarrow L^\infty$ we can arrange
\begin{equation}\label{eq:compatibleInitialData}
\|\mathcal{P}_m{p}_0^\eps\|_{L^\infty}^2+\|\mathcal{P}_mq_0^\eps\|_{L^\infty}^2<2\eta_2,
\end{equation}
for $m = m(\eps)$ large enough, and $\eps$ sufficiently small.

Multiplying equation \eqref{p equ++} by $\partial_tp_m$, and
equation \eqref{q equ++} by $\partial_tq_m$, adding and integrating over $\Omega$ gives
\begin{multline}\label{est:energyineqdisc}
\mathcal{E}[p_m(t), q_m(t)]+\eps\int_\Omega f_N(p_m,q_m)\,dx+\|\partial_t p_m\|_{L^2((0,T)\times\Omega)}+\|\partial_t q_m\|_{L^2((0,T)\times\Omega)}\\
\le \mathcal{E}^\eps [p_m(0),q_m(0)].
\end{multline}
Here $\mathcal{E}[v,w]\defeq \mathcal{E}^\eps[v,w]-\eps \int_\Omega
f_N(v,w)\,dx$ with $\mathcal{E}^\eps$ as defined in
\eqref{def:energyEeps}.
To obtain~\eqref{est:energyineqdisc} we integrated by parts and used the fact that $\mathcal P_m$ is a self-adjoint operator on $L^2$.

We now focus on understanding what a priori bounds are provided
by \eqref{est:energyineqdisc}. We claim that the finite dimensionality of $H_m$ allows us to find a large enough
constant $C(m)$, which depends on $m$ but not on $N$ or $\eps$, such
that
\begin{equation}\label{est:finitedimcoerciv}
\int_\Omega \frac{\zeta}{2}\big(|\nabla p_m(t)|^2+|\nabla
q_m(t)|^2\big)\,dx\le \mathcal{E}[p_m(t),q_m(t)]+C(m).
\end{equation}

To see this, observe that there exists a constant
$\tilde C(m)$ depending only on $m$ and $\Omega$, such that
\begin{align*}
&\int_\Omega 2L_4\Big[ p_m\big(|\partial_1p_m|^2-|\partial_2p_m|^2\big)+2q_m\big(\partial_1p_m\partial_2p_m+\partial_1q_m\partial_2q_m\big)\Big]\,dx\\
&\quad+\int_\Omega a(p_m^2+q_m^2)+c(p_m^2+q_m^2)^2\,dx\\
&\le C\int_\Omega  \frac{2}{3}L_4(p_m^3+q_m^3)+\frac{4}{3}L_4|\nabla p_m|^3+\frac{4}{3}L_4|\nabla q_m|^3+a(p_m^2+q_m^2)+c(p_m^2+q_m^2)^2\,dx\\
&\le C\int_\Omega
L_4(p_m^3+q_m^3)+a(p_m^2+q_m^2)+c(p_m^2+q_m^2)^2\,dx+L_4\tilde
C(m)\int_\Omega \big(p_m^3+q_m^3\big)\,dx.
\end{align*} (where for the first inequality we used Young's inequality $ab\le \frac{a^3}{3}+\frac{2b^{\frac{3}{2}}}{3}$
and for the second the finite dimensionality of $H_m$).
This immediately implies~\eqref{est:finitedimcoerciv} as claimed.

For the rest of this Step, for the sake of clarity we will specify the hidden dependence on $N$, namely denote $p_m^N\defeq p_m$, $q_m^N\defeq q_m$.

Thus using~\eqref{properties:fapprox}, adding $C(m) + \eps \alpha^2 \abs{\Omega}$ to both sides of \eqref{est:energyineqdisc}
and taking into account \eqref{est:finitedimcoerciv},  we have the apriori bounds
\begin{align*}
  \norm{p_m^N}_{L^\infty(0,T;H^1)} &\leq C,
  &
  \norm{\partial_t p_m^N}_{L^2(0,T;L^2)} &\leq C,
  \\
  \norm{q_m^N}_{L^\infty(0,T;H^1)} &\leq C,
  &
  \norm{\partial_t q_m^N}_{L^2(0,T;L^2)} &\leq C,
\end{align*}
where the constant $C$ is independent of $N$ but depending on $m$. Further, since
$p_m^N,q_m^N\in H_m$ and $H_m$ is a finite dimensional space with a
$C^\infty$ basis the above implies
\begin{equation}\label{est:iteratedinftyN}
\sup_{N\in\NN}\|p_m^N\|_{L^\infty(0,T;H^k)}
+\|q_m^N\|_{L^\infty(0,T;H^k)}+\|\partial_t p_m^N\|_{L^2(0,T;H^k)}
+\|\partial_t q_m^N\|_{L^2(0,T;H^k)}<\infty \quad\forall k\in\NN.
\end{equation}

The above estimates show that as $N\to\infty$, the limit of $p_m^N$
and $q_m^N$ exist (along a subsequence) and in suitable spaces, to be denoted $p_m$, respectively $q_m$. Further, using the above
apriori estimates in~\eqref{est:energyineqdisc} we obtain
\begin{equation*}
  \eps\int_\Omega f_N(p_m^N,q_m^N)\,dx\leq C,
\end{equation*}
where the constant $C$ is independent of $N$. In particular, using
the monotonicity of $f_N(\cdot,\cdot)$ with respect to $N$, we have
that for any $N_0\ge 1$:
\begin{equation*}
\eps\int_\Omega f_{N_0}(p_m^N,q_m^N)\,dx\leq C,\forall N\ge N_0
\end{equation*} hence using the pointwise convergence of $p^N_m,q^N_m$ respectively to $p_m,q_m$ we get:
\begin{equation*}
  \eps\int_\Omega f_{N_0}(p_m,q_m)\,dx\leq C,
\end{equation*}

Since $N_0$ was chosen arbitrarily the monotone convergence theorem
now implies
\begin{equation}\label{bound of f indep of N}
\eps\int_\Omega f(p_m,q_m)\,dx\leq C,
\end{equation}
in the limit $N \to \infty$, along a subsequence. Thus, as $N \to
\infty$ along a subsequence,  we obtain a solution to \eqref{p
equ++}--\eqref{q equ++} with $f_N$ replaced by $f$. Further
\eqref{bound of f indep of N} shows that for all $t > 0$ the
limiting functions $p_m$ and $q_m$ are in the effective domain of
the convex potential $f$.

\bigskip
\step{Step 2: Sending $m\to\infty$.}
Since~\eqref{bound of f indep of N} implies $p_m^2+q_m^2< 4\eta_2$ for all $m \in \NN$, almost everywhere
 in $(0, T)\times\Omega$, the same argument as in Proposition~\ref{proposition on 2D lower bound} now shows
$$
 2 \tilde{\eta}\int_{\Omega}|\nabla{p}_m(x,t)|^2+|\nabla{q}_m(x,t)|^2dx-\frac{a^2}{4c}|\Omega|
  \leq \mathcal{E}[p_m(t),q_m(t)], \qquad \forall t>0, \, m\in
  \mathbb{N}.
$$
Using~\eqref{est:energyineqdisc} (with $f_N$ replaced by $f$) shows the existence of a constant $C(\eps)$ such that
\begin{equation}\label{uniform bound indep of m}
\int_0^T\int_\Omega\big(|\partial_tp_m|^2+|\partial_tq_m|^2\big)\,dx \, dt
< C(\eps).
\end{equation}
Since we work on a domain where $f$ is finite almost everywhere,
equations~\eqref{p equ++} and~\eqref{q equ++} (with $f_N$ replaced
by $f$) show
\begin{align}\label{uniform bound dep on eps}
&\int_0^T\int_\Omega \left(\zeta\Delta{p_m}-\eps\mathcal{P}_m\left\{\frac{2 p_m}{4\eta_2-(p_m^2+q_m^2)}\right\}+\mathcal{G}_m\right)^2\,dx\,dt\non\\
&+\int_0^T\int_\Omega
\left(\zeta\Delta{q_m}-\eps\mathcal{P}_m\left\{\frac{2
q_m}{4\eta_2-(p_m^2+q_m^2)}\right\}+\mathcal{H}_m\right)^2\,dx\,dt<C(\eps).
\end{align}
The quantities $\mathcal G_m$ and $\mathcal H_m$ above are defined by
\begin{gather}
  \label{eqnG}
  \mathcal{G}_m\defeq \frac{\partial{p_m}}{\partial{t}}-\zeta\Delta{p_m}+\eps\mathcal{P}_m\left\{\frac{2 p_m}{4\eta_2-(p_m^2+q_m^2)}\right\}
  \\
  \label{eqnH}
  \llap{\text{and}\qquad}
  \mathcal{H}_m\defeq \frac{\partial{q_m}}{\partial{t}}-\zeta\Delta{q_m}+\eps\mathcal{P}_m\left\{\frac{2 q_m}{4\eta_2-(p_m^2+q_m^2)}\right\}.
\end{gather}
Expanding the L.H.S. of \eqref{uniform bound indep of m}, we have
\begin{align}
C(\eps) &>\int_0^T\int_{\Omega}\big(|\zeta\Delta{p_m}|^2+|\zeta\Delta{q_m}|^2\big)\,dx \, dt
+\int_0^T\int_{\Omega}\big(|\mathcal{G}_m|^2+|\mathcal{H}_m|^2\big)\,dx \, dt\non\\
&\qquad+\int_0^T\int_{\Omega}\left|\eps\mathcal{P}_m\Big\{\frac{2p_m}{4\eta_2-(p_m^2+q_m^2)}\Big\}\right|^2
 +\left|\eps\mathcal{P}_m\Big\{\frac{2q_m}{4\eta_2-(p_m^2+q_m^2)}\Big\}\right|^2dxdt\non\\
&\qquad-\int_0^T\int_{\Omega}2\zeta\eps\Delta{p}_m\mathcal{P}_m\Big\{\frac{2p_m}{4\eta_2-(p_m^2+q_m^2)}\Big\}
 +2\zeta\eps\Delta{q}_m\mathcal{P}_m\Big\{\frac{2q_m}{4\eta_2-(p_m^2+q_m^2)}\Big\}\,dx \, dt\non\\
&\qquad-\int_0^T\int_{\Omega}2\eps\mathcal{P}_m\Big\{\frac{2p_m}{4\eta_2-(p_m^2+q_m^2)}\Big\}\mathcal{G}_m
 +2\eps\mathcal{P}_m\Big\{\frac{2q_m}{4\eta_2-(p_m^2+q_m^2)}\Big\}\mathcal{H}_m\,dx \, dt\non\\
&\qquad+\int_0^T\int_{\Omega}2\zeta\big(\Delta{p}_m\mathcal{G}_m+\Delta{q}_m\mathcal{H}_m\big)\,dx \, dt\non\\
&\defeq I_1+\cdots +I_6.
\end{align}
Clearly $I_1, I_2$ and $I_3$ are positive.
For $I_4$, we integrate by parts to obtain
\begin{align*}
I_4&=4\zeta\eps\int_0^T\int_{\Omega}\nabla{p}_m\cdot\nabla\Big\{\frac{2p_m}{4\eta_2-(p_m^2+q_m^2)}\Big\}
+\nabla{q}_m\cdot\nabla\Big\{\frac{2q_m}{4\eta_2-(p_m^2+q_m^2)}\Big\}\,dx \, dt\\
&=4\zeta\eps\int_0^T\int_{\Omega}\frac{4\eta_2\big(|\nabla{p}_m|^2+|\nabla{q}_m|^2\big)}{4\eta_2-(p_m^2+q_m^2)}\,dx \, dt\\
&\qquad+4\zeta\eps\int_0^T\int_{\Omega}\frac{p_m^2|\nabla{p}_m|^2+q_m^2|\nabla{q}_m|^2
-p_m^2|\nabla{q}_m|^2-q_m^2|\nabla{p}_m|^2+4p_mq_m\nabla{p}_m\nabla{q}_m}{4\eta_2-(p_m^2+q_m^2)}\,dx \, dt\\
&\geq 4\zeta\eps\int_0^T\int_{\Omega}\frac{2p_m^2|\nabla{p}_m|^2+2q_m^2|\nabla{q}_m|^2+4p_mq_m\nabla{p}_m\nabla{q}_m}{4\eta_2-(p_m^2+q_m^2)}\,dx \, dt\\
&\geq 0.
\end{align*}
Here we used the fact that $p_m^2+q_m^2< 4\eta_2$ a.e. in $(0, T)\times\Omega$.

By Young's inequality we see
\begin{align*}
I_5+I_6 &\geq -\frac12(I_1+I_3)-16I_2.
\end{align*}
Consequently
$$
  C(\eps) > \frac12(I_1+I_3)-15 I_2.
$$
We claim that due to our choice of $\eta_2$, the $15 I_2$ term can
be hidden in $I_1 / 2$. Indeed, using~\eqref{p equ++}, \eqref{q
equ++}, \eqref{eqnG} and \eqref{eqnH} we see that $\mathcal G_m$ and
$\mathcal H_m$ are respectively all the terms in~\eqref{p equ++}
and~\eqref{q equ++} that have $L_4$ as a coefficient. Of these, the
second order terms are all multiplied by $p_m$ or $q_m$, both are
which are uniformly bounded by $\eta_2$. The first order terms can
be handled by interpolation. Consequently when $\eta_2$ is
sufficiently small we can arrange $\abs{15 I_2} \leq I_1 / 4$ (see also Remark~\ref{remark:60constant}).

The above shows
\begin{equation}\label{est:H2potential2} \int_0^T\int_\Omega \delta_0 |\Delta
p_m|^2+\delta_0 |\Delta q_m|^2\,dx\,dt\le C(\eps),
\end{equation}
for some small constant $\delta_0>0$ independent of $m$.
This allows us to pass to the limit $m\to\infty$ and obtain weak solutions
of \eqref{p equ+},\eqref{q equ+}.
Moreover, these solutions are such
that the limits $p^\eps,q^\eps$ belong to $L^\infty(0,T;H^1\cap L^\infty)\cap
L^2(0,T;H^2)$.
Since $H^1\hookrightarrow
L^6$ we now have $p^\eps,q^\eps\in L^4(0,T;W^{1,3})$.
Consequently using the definition of weak solutions we see that \eqref{p equ+},\eqref{q equ+} hold pointwise with all the terms interpreted
as elements of $L^2(0,T;L^{3/2})$.

\step{Step 3: Sending $\eps\to0$.}
We recall that for clarity of presentation we have suppressed the $\eps$ superscript, and $p, q$ are
solutions of the $\eps$ dependent system \eqref{p equ+}--\eqref{q equ+}.
Since all terms in the equation \eqref{p equ+},\eqref{q equ+} are $L^2(0,T;L^{3/2})$ we can use the
same argument we used in the proof of Proposition~\ref{first proposition on 2D small data}.
Namely letting $h^2 = p^2 + q^2$, multiplying \eqref{p equ+} by $p(h^2-\eta_2)^+$, \eqref{q equ+}
by $q(h^2-\eta_2)^+$, adding and integrating over $\Omega$ leads to the analogue of~\eqref{equation for h square}:
\begin{align}
 \frac14\del_t \int_{\Omega}|(h^2-\eta_2)^+|^2(t) dx
 &\leq
  \frac12\int_{\Omega} \left[\big(3+2\sqrt{2}\big)|L_4|h-\zeta\right]|\nabla(h^2-\eta_2)^+|^2dx
\non\\
&\qquad+
\int_{\Omega}\left[\big(1+4\sqrt{2}\big)|L_4|h-\zeta\right](|\nabla{p}|^2+|\nabla{q}|^2)(h^2-\eta_2)^+dx
\non\\
&\qquad+\int_{\Omega}
-\eps\frac{2h^2(h^2-\eta_2)^+}{4\eta_2-(p^2+q^2)}\,dx.
%\frac14\int_{\Omega}|(h^2-\eta_2)^+|^2(0) dx
\label{equation for h square+}
\end{align}

Recall that we chose the initial data such that for the $\eps>0$ small enough we have
$$
  \|h(0,\cdot)\|_{L^\infty}^2
  <\eta_2
  <\eta_1
  =\frac{\zeta^2}{(1+4\sqrt{2})^2L_4^2}.
  %>0
$$
Inequality~\eqref{equation for h square+} shows that
$$
 \del_t \norm{ (h(t)^2-\eta_2)^+ }_{L^2}^2 \leq 0
 \qquad\text{provided}\qquad \norm{h(t)}_{L^\infty}^2 \leq \eta_1.
$$
This immediately shows that if $\norm{(h(t)^2 - \eta_2)^+}_{L^2}^2 = 0$ at time $0$, it must remain $0$ for all $t \geq 0$.
Consequently $p^2+q^2<\eta_2$ for all $t \geq 0$.

\iffalse
We note that the first three integrals on the right hand side of the
above inequality when regarded as  functions of one variable, the
time variable, are continuous (the continuity of the third integral
is due for example to the estimate  analogue to
\eqref{est:H2potential2} but for solutions of \eqref{p
equ+},\eqref{q equ+}). Thus, since they are negative at time $0$
there exists some time $\delta>0$ such that they are negative on
$[0,\delta]$.  Let us assume that there exists a maximal time
$T<\infty$ such that for $t\in [0,T]$ we have that all three
integrals are negative. Then the estimate implies
$\|h(t,\cdot)\|_{L^\infty}^2\le \eta_2$, for all $t\in [0,T]$, so
all the integrals, regarded as functions of time, are still strictly
negative at time $T$ hence by their continuity they are negative on
$[T,T+\gamma)$ for some $\gamma>0$ thus contradicting the maximality
of $T$. The contradiction we have reached shows that $T=\infty$ and
$p^2+q^2\le\eta_2$ for all times.
\fi

This immediately shows that $\abs{p \del_p f(p, q)} \leq C(\eta_2)$, and the extra
 $\eps$-terms appearing in~\eqref{p equ+}--\eqref{p equ+} converge to $0$ uniformly as $\eps \to 0$.
Following the proof of Proposition~\ref{proposition on 2D lower bound} this will now give~\eqref{boundary used}
 with additional $\eps$ terms that are uniformly converging to $0$.
This gives uniform in $\eps$ estimates for $p,q$ in $L^2(0,T;H^2)$ and for $\del_t p, \del_t q$ in $L^2( 0, T; L^2 )$,
 which is enough to pass to the limit $\eps\to 0$.
\end{proof}
\begin{lemma}\label{lemma on continuous dependence}
Suppose
$$
  Q_i=\left(
  \begin{array}{cc}
   p_i & {q}_i \\
   {q}_i & -{p}_i \\
   \end{array}
   \right) \in L^\infty(0, \infty; H^1(\Omega))\cap L_{loc}^2(0,
   \infty; H^2(\Omega)) \ \ (i=1,2)
$$
are two global weak solutions to the problem \eqref{p equ}-\eqref{eqnPQICBC} on $(0, T)$, which satisfy
$$
  \|{Q}_i\|_{L^\infty((0,\infty)\times\Omega)}\leq\sqrt{2\eta_2} \ (i=1, 2),$$ with $\eta_2$ as in Theorem~\ref{thm2Dgexist}.

Then for any $t \in (0, T)$, we have
\begin{equation}
\|(Q_1-Q_2)(t)\| \leq Ce^{Ct}\|Q_{01}-Q_{02}\|,
 \label{continuous dependence equation}
\end{equation}
where $C>0$ is a constant that depends on $\Omega$, $Q_{0i}$ $(i=1,
2)$, $\tilde{Q}$ and the coefficients of the system, but not $t$.
\end{lemma}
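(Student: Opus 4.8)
The plan is a standard energy--Gr\"onwall stability estimate, with one genuinely delicate point caused by the cubic elastic term. Write $p\defeq p_1-p_2$, $q\defeq q_1-q_2$, $Q\defeq Q_1-Q_2$. Since both solutions carry the same Dirichlet data, $Q(t)\in H^1_0(\Omega)$ for a.e.\ $t$, and subtracting the two copies of \eqref{p equ}--\eqref{q equ} gives an evolution system for $(p,q)$ whose right-hand side splits into: the dissipative part $\zeta\Delta p$, $\zeta\Delta q$; the zeroth-order term $-a(p,q)$; the bulk cubic differences $-2c\bigl((p_1^2+q_1^2)p_1-(p_2^2+q_2^2)p_2\bigr)$ and its $q$-analogue; the $L_4$ quadratic-gradient differences; and the $L_4$ ``$Q\cdot D^2Q$'' differences. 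Because $Q_i\in L^2_{loc}(0,T;H^2)$ and $\partial_tQ_i\in L^2_{loc}(0,T;L^2)$, the difference satisfies $Q\in L^2(0,T;H^2\cap H^1_0)$ with $\partial_tQ\in L^2(0,T;L^2)$, so $t\mapsto\norm{Q(t)}^2$ is absolutely continuous and we may pair the difference system with $(p,q)$ itself (justified by the usual density argument). Pairing and integrating by parts in the Laplacian terms produces the dissipation $-\zeta\bigl(\norm{\nabla p}^2+\norm{\nabla q}^2\bigr)$.

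The lower-order and quadratic-gradient terms are routine. The $-a(p,q)$ term contributes a multiple of $\norm{Q}^2$, and the cubic bulk difference is bounded pointwise by $C(c,\eta_2)\abs{Q}^2$ using $\norm{Q_i}_{L^\infty}\le\sqrt{2\eta_2}$, hence also contributes $\le C\norm{Q}^2$. For a quadratic-gradient difference I write $(\partial_kp_1)^2-(\partial_kp_2)^2=(\partial_kp_1+\partial_kp_2)\,\partial_kp$, so after pairing with $p$ the integrand is controlled by $\abs{\nabla Q_i}\,\abs{\nabla Q}\,\abs{Q}$; estimating by $\norm{\nabla Q_i}_{L^4}\norm{\nabla Q}\norm{Q}_{L^4}$, using the Ladyzhenskaya inequality $\norm{Q}_{L^4}^2\le C\norm{\nabla Q}\norm{Q}$ from Lemma~\ref{def of C1 and C2} and the $2D$ interpolation $\norm{\nabla Q_i}_{L^4}^4\le C\norm{\nabla Q_i}^2\norm{Q_i}_{H^2}^2$, then invoking Young's inequality, turns such a term into $\varepsilon\norm{\nabla Q}^2+g_1(t)\norm{Q}^2$ with $g_1\in L^1(0,T)$ (indeed $g_1$ is controlled by $\norm{\nabla Q_i}_{L^4}^4\in L^1(0,T)$ because $Q_i\in L^\infty(0,T;H^1)\cap L^2(0,T;H^2)$).

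The delicate terms are the $L_4$ ``$Q\cdot D^2Q$'' differences, for instance $p_1\partial_1\partial_1p_1-p_2\partial_1\partial_1p_2=p\,\partial_1\partial_1p_1+p_2\,\partial_1\partial_1p$. Pairing the first summand with $p$ gives $\int p^2\,\partial_1\partial_1p_1\le\norm{p}_{L^4}^2\norm{Q_1}_{H^2}$, again absorbed as $\varepsilon\norm{\nabla Q}^2+g_2(t)\norm{Q}^2$ with $g_2\in L^1$. The second summand carries a second derivative of the \emph{difference}, so one integrates by parts: $\int p_2\,p\,\partial_1\partial_1p=-\int(\partial_1p_2)\,p\,\partial_1p-\int p_2\,(\partial_1p)^2$; the first resulting integral is of the type just handled, while the second is $\le\norm{p_2}_{L^\infty}\norm{\nabla p}^2\le\abs{L_4}\sqrt{2\eta_2}\,\norm{\nabla Q}^2$ --- a term with the same sign structure as the dissipation. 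This is precisely where the smallness of $\eta_2$ (chosen in \eqref{def of eta 2} so that $\abs{L_4}\sqrt{2\eta_2}$ is a small fraction of $\zeta$) is used, exactly as in the proof of Proposition~\ref{first proposition on 2D small data}: all such ``$\int Q_i(\nabla Q)^2$'' contributions, together with the $\varepsilon\norm{\nabla Q}^2$ errors, are absorbed into a fraction of $\zeta\bigl(\norm{\nabla p}^2+\norm{\nabla q}^2\bigr)$. Collecting everything yields
\[
  \frac{d}{dt}\norm{Q(t)}^2+\zeta'\bigl(\norm{\nabla p}^2+\norm{\nabla q}^2\bigr)\le g(t)\,\norm{Q(t)}^2,
\]
for some $\zeta'>0$ and $g=C_0+g_1+g_2+\cdots\in L^1(0,T)$, where $C_0$ collects the genuinely constant contributions. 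Dropping the dissipation and applying Gr\"onwall's inequality gives $\norm{Q(t)}^2\le\norm{Q(0)}^2\exp\bigl(\int_0^tg\bigr)$; since $\int_0^Tg<\infty$ and the constant part of $g$ is $C_0$, this is bounded by $C^2e^{2Ct}\norm{Q_{01}-Q_{02}}^2$, and taking square roots gives \eqref{continuous dependence equation}, with $C$ depending on $\Omega$, the coefficients, $\norm{\tilde Q}$, and the finite norms $\norm{Q_i}_{L^\infty(0,T;H^1)}$, $\norm{Q_i}_{L^2(0,T;H^2)}$, but not on $t$. The only real obstacle is this last family of terms: they cannot be closed by brute force, and one must combine integration by parts with the $L^\infty$-smallness guaranteed by the choice of $\eta_2$; the rest is bookkeeping.
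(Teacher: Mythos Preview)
Your proposal is correct and follows essentially the same route as the paper: subtract the equations, test with the difference, absorb the zero-order and quadratic-gradient terms via $L^4$ interpolation (Ladyzhenskaya) and Young's inequality with coefficient $\|Q_i\|_{H^2}^2\in L^1(0,T)$, and handle the dangerous $Q\cdot D^2Q$ differences by splitting, integrating by parts, and using the $L^\infty$-smallness $\|Q_i\|_{L^\infty}\le\sqrt{2\eta_2}$ to absorb the resulting $\int p_2(\partial_k\bar p)^2$-type terms into the $\zeta$-dissipation. The paper organizes the bookkeeping slightly differently (labeling the six groups $I_1,\dots,I_6$ and then $I_{3a},\dots,I_{3j}$) and tracks the explicit constant $4|L_4|\|h_2\|_{L^\infty}\le\frac{4\zeta}{1+4\sqrt{2}}<\frac{2\zeta}{3}$, but the substance is identical.
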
%}}}
\begin{proof}%{{{
Let $\bar{p}={p}_1-{p}_2$, $\bar{q}={q}_1-{q}_2$. We see
%%%%%%%%%%%%%%%%%%%%%%%%%%%%%%%%%%%%%%%%%%%%%%%%%%%%%%%
\begin{align}
\bar{p}_t
=\zeta\Delta&\bar{p}-a\bar{p}-2c(p_1^2+p_1p_2+p_2^2+q_1^2)\bar{p}-2cp_2(q_1+q_2)\bar{q} \non \\
&+L_4\big[
\partial_1\bar{p}\partial_1(p_1+p_2)-\partial_1\bar{q}\partial_1(q_1+q_2)-\partial_2\bar{p}\partial_2(p_1+p_2)\big]\non\\
&+L_4\big[\partial_2\bar{q}\partial_2(q_1+q_2)+2\partial_1\bar{p}\partial_2q_1+2\partial_1p_2\partial_2\bar{q}+2\partial_1q_1\partial_2\bar{p}+2\partial_2p_2\partial_1\bar{q}
\big] \non \\
&+2L_4\big(\bar{p}\partial_1\partial_1p_1+p_2\partial_1\partial_1\bar{p}+2\bar{q}\partial_1\partial_2p_1+2q_2\partial_1\partial_2\bar{p}-\bar{p}\partial_2\partial_2p_1
-p_2\partial_2\partial_2\bar{p} \big), \label{p bar equ}
\\
%\footnote{\color{green}Dani 07/21: the first line does not seem to be correct; the third
%line seems to be missing $2\partial_2\bar
%p\partial_1q_1+\partial_2p_2\partial_1\bar q$ \color{red} Xiang 07/23: It
%was my bad. Now it has been solved.}
\shortintertext{and}
\bar{q}_t
=\zeta\Delta&\bar{q}-a\bar{q}-2c(q_1^2+q_1q_2+q_2^2+p_2^2)\bar{q}-2cq_1(p_1+p_2)\bar{p} \non\\
&+2L_4\big[\partial_1\bar{q}\partial_2q_1+\partial_1q_2\partial_2\bar{q}-\partial_1\bar{p}\partial_2p_1-\partial_1p_2\partial_2\bar{p}\big]\non\\
&+2L_4\big[\partial_1\bar{p}\partial_1q_1+\partial_1p_2\partial_1\bar{q}-\partial_2\bar{p}\partial_2q_1-\partial_2p_2\partial_2\bar{q} \big]  \non\\
&+2L_4\big(\bar{p}\partial_1\partial_1q_1+p_2\partial_1\partial_1\bar{q}+2\bar{q}\partial_1\partial_2q_1+2q_2\partial_1\partial_2\bar{q}-\bar{p}\partial_2\partial_2q_1
-p_2\partial_2\partial_2\bar{q} \big),
\label{q bar equ}\\
\bar{p}(0,x)=&\bar{q}(0,x)=0, \ \forall x \in \Omega, \ \
\bar{p}|_{\partial\Omega}=\bar{q}|_{\partial\Omega}=0.
 \label{IC for p bar equ}
 \end{align}
 %\footnote{\color{green}Dani 07/21: the first line does not seem to be correct \color{red} Xiang 07/23: It is solved}

Multiplying equation \eqref{p bar equ} with $\bar{p}$, equation
\eqref{q bar equ} with $\bar{q}$, integrating over $\Omega$ and
using the boundary condition \eqref{IC for p bar equ} gives
\begin{align}
\frac12\frac{d}{dt}\big(\|\bar{p}\|^2+\|\bar{q}\|^2&\big)+\zeta\|\nabla\bar{p}\|^2+\zeta\|\nabla\bar{q}\|^2\non\\
=L_4\int\big[
\partial_1&\bar{p}\partial_1(p_1+p_2)-\partial_1\bar{q}\partial_1(q_1+q_2)-\partial_2\bar{p}\partial_2(p_1+p_2)+\partial_2\bar{q}\partial_2(q_1+q_2)\non\\
&+2\partial_1\bar{p}\partial_2q_1+2\partial_1p_2\partial_2\bar{q}+2\partial_1q_1\partial_2\bar{p}+2\partial_2p_2\partial_1\bar{q}
\big]\bar{p}\,dx\non\\
-\int{a}\bar{p}^2&+2c(p_1^2+p_1p_2+p_2^2+q_1^2)\bar{p}^2+2cp_2\big[(q_1+q_2)\bar{q}\big]\bar{p}\,dx \non\\
+2L_4\int&\big(\bar{p}\partial_1\partial_1p_1+p_2\partial_1\partial_1\bar{p}+2\bar{q}\partial_1\partial_2p_1+2q_2\partial_1\partial_2\bar{p}-\bar{p}\partial_2\partial_2p_1
-p_2\partial_2\partial_2\bar{p} \big)\bar{p}\,dx \non\\
+2L_4\int&\big[
\partial_1\bar{q}\partial_2q_1+\partial_1q_2\partial_2\bar{q}-\partial_1\bar{p}\partial_2p_1-\partial_1p_2\partial_2\bar{p}
+\partial_1\bar{p}\partial_1q_1+\partial_1p_2\partial_1\bar{q}\non\\
&-\partial_2\bar{p}\partial_2q_1-\partial_2p_2\partial_2\bar{q}
\big]\bar{q}\,dx\non\\
-\int{a}\bar{q}^2&+2c(q_1^2+q_1q_2+q_2^2+p_2^2)\bar{q}^2+2cq_1\big[(p_1+p_2)\bar{p}\big]\bar{q}\,dx  \non\\
+2L_4\int&\big(\bar{p}\partial_1\partial_1q_1+p_2\partial_1\partial_1\bar{q}+2\bar{q}\partial_1\partial_2q_1+2q_2\partial_1\partial_2\bar{q}-\bar{p}\partial_2\partial_2q_1
-p_2\partial_2\partial_2\bar{q} \big)\bar{q}\,dx \non\\
%\defeq
= I_1+\cdots+&I_6.
\end{align}
%\footnote{\color{green}Dani 07/21: $I_1,I_2$ and $I_5$ need to be modified \color{red}Xiang 07/23: It is solved}
Note that $p_1, p_2, q_1, q_2, \bar{p}, \bar{q} \in  L^\infty(0,
\infty; H^1(\Omega))\cap L_{loc}^2(0, \infty; H^2(\Omega))\cap
L^\infty\big((0, \infty)\times \Omega\big)$, hence we know by Lemma
\ref{def of C1 and C2} that
%%%%%%%%%%%%%%%%%%%%%%%%%%%%%%%%%%%%%%%%%%%%%%%%%%%%%%%%%%%%%%%%%%%%%%%%%%%%%%%%%%%%%%%%%%%%%%%%%%%%%%%%%%%%%%%%
\begin{align} I_1+&I_4 \non\\
\leq&C\big(\|\bar{p}\|_{L^4(\Omega)}+\|\bar{q}\|_{L^4(\Omega)}\big)\big(\|\nabla\bar{p}\|+\|\nabla\bar{q}\|\big)
\big(\|\nabla{Q}_1\|_{L^4(\Omega)}+\|\nabla{Q}_2\|_{L^4(\Omega)}\big)
\non\\
\leq&
C\big(\|\Delta{Q}_1\|^\frac12\|\nabla{Q}_1\|^\frac12+\|\nabla{Q}_1\|+\|\Delta{Q}_2\|^\frac12\|\nabla{Q}_2\|^\frac{1}{2}+\|\nabla{Q}_2\|\big)
\big(\|\bar{p}\|^\frac12+\|\bar{q}\|^\frac12
\big)\big(\|\nabla\bar{p}\|^\frac32+\|\nabla\bar{q}\|^\frac32\big)
\non\\
\leq&\frac{\zeta}{9}\big(\|\nabla\bar{p}\|^2+\|\nabla\bar{q}\|^2\big)
+C\big(\|\Delta{Q}_1\|^2+\|\Delta{Q}_2\|^2\big)\big(\|\bar{p}\|^2+\|\bar{q}\|^2\big),
\end{align}
%\footnote{\color{green} Dani 07/21: modified but it doesn't create any troubles} and
%%%%%%%%%%%%%%%%%%%%%%%%%%%%%%%%%%%%%%%%%%%%%%%%%%%%%%%%%%%%%%%%%%%%%%%%%%%%%%%%%%%%%%%%%%%%%%%%%%%%%%%%%%%%%%%%
\begin{eqnarray}
I_2+I_5 \leq C\big(\|\bar{p}\|^2+\|\bar{q}\|^2\big).
\end{eqnarray}
For $I_3$, integrating by parts gives
%%%%%%%%%%%%%%%%%%%%%%%%%%%%%%%%%%%%%%%%%%%%%%%%%%%%%%%%%%%%%%%%%%%%%%%%%%%%%%%%%%%%%%%%%%%%%%%%%%%%%%%%%%%%%%%%
 \begin{align*}
 I_3 &= -2L_4\Big\{ 2\int\bar{p}\partial_1\bar{p}\partial_1p_1\,dx+\int\partial_1p_2\bar{p}\partial_1\bar{p}\,dx+\int{p}_2(\partial_1\bar{p})^2\,dx
 +2\int\bar{p}\partial_1\bar{q}\partial_2p_1\,dx \\
    &\qquad\qquad\quad+2\int\bar{q}\partial_1\bar{p}\partial_2p_1\,dx+2\int\bar{p}\partial_1q_2\partial_2\bar{p}\,dx+2\int{q}_2\partial_1\bar{p}\partial_2\bar{p}\,dx
    \\
  &\qquad\qquad\quad-2\int\bar{p}\partial_2\bar{p}\partial_2p_1\,dx-\int\partial_2p_2\bar{p}\partial_2\bar{p}\,dx-\int{p}_2(\partial_2\bar{p})^2\,dx   \Big\}\\
  &%\defeq
  = I_{3a}+\cdots+I_{3j}.
  \end{align*}
Among all these $I_{3a}, \cdots, I_{3j}$, we may estimate
separately. First, by the assumption
$$
\|{Q}_1\|_{L^\infty((0,\infty)\times\Omega)}\leq\sqrt{2\eta_2}, \ \
\ \ \
\|{Q}_2\|_{L^\infty((0,\infty)\times\Omega)}\leq\sqrt{2\eta_2},
$$
 we see
 \begin{align}
I_{3c}+&I_{3g}+I_{3j}\non\\
&=2L_4\Big\{\int{p}_2(\partial_1\bar{p})^2\,dx+2\int{q}_2\partial_1\bar{p}\partial_2\bar{p}\,dx-\int{p}_2(\partial_2\bar{p})^2\,dx\Big\}
\non\\
&\leq
2|L_4|\Big\{\|p_2\|_{L^\infty(\Omega)}\|\partial_1\bar{p}\|^2+\|q_2\|_{L^\infty(\Omega)}\big(\|\partial_1\bar{p}\|^2
+\|\partial_2\bar{p}\|^2\big)+\|p_2\|_{L^\infty(\Omega)}\|\partial_2\bar{p}\|^2
\Big\} \non\\
&\leq 2|L_4|\|h_2\|_{L^\infty(\Omega)}\big\{\|\partial_1\bar{p}\|^2+\|\partial_1\bar{p}\|^2+\|\partial_2\bar{p}\|^2+\|\partial_2\bar{p}\|^2 \big\} \non\\
&\leq 4|L_4|\|h_2\|_{L^\infty(\Omega)}\|\nabla\bar{p}\|^2 \non\\
&\leq\frac{4\zeta}{1+4\sqrt{2}}\|\nabla\bar{p}\|^2 \non\\
&\leq\frac{2\zeta}{3}\|\nabla\bar{p}\|^2. \non
\end{align}
%\footnote{\color{green} Dani 07/21: What is happening here starting from the second inequality? What is $h$? Presumably the one
 %in \eqref{def:h}. Please reference this and also where you got the penultimate inequality from, this seems pretty delicate...
% \color{red}Xiang 07/23: more explanations were added right after the inequality.
% }
Here $h_2=\sqrt{p_2^2+q_2^2}$ is defined in the same way as
\eqref{def:h}, and we know from \eqref{def of eta 2} and Proposition
\ref{proposition on 2D lower bound} that
$\|h\|_{L^\infty(\Omega)}\leq \sqrt{\eta_2}\leq
\frac{\zeta}{(1+4\sqrt{2})|L_4|}$. Next, similar to the estimates
for $I_1$ and $I_4$, we have
\begin{align*}
I_{3a}+I_{3b}+I_{3d}&+I_{3e}+I_{3f}+I_{3h}+I_{3i}\\
\leq
&\frac{\zeta}{9}\big(\|\nabla\bar{p}\|^2+\|\nabla\bar{q}\|^2\big)
+C\big(\|\Delta{Q}_1\|^2+\|\Delta{Q}_2\|^2\|+1\big)\big(\|\bar{p}\|^2+\|\bar{q}\|^2\big).
\end{align*}
%\footnote{\color{green} Dani 07/21: Modified but it creates no troubles; also propagated this modification to the next lines}
Therefore,
\begin{equation}
I_3 \leq
\frac{7\zeta}{9}\|\nabla\bar{p}\|^2+\frac{\zeta}{9}\|\nabla\bar{q}\|^2+C\big(\|\Delta{Q}_1\|^2+\|\Delta{Q}_2\|^2+1\big)\big(\|\bar{p}\|^2+\|\bar{q}\|^2\big).
\end{equation}
%And analogously to $I_3$, we may control $I_6$ as
We control $I_6$ in a manner similar to $I_3$:
\begin{equation}
 I_6 \leq
\frac{\zeta}{9}\|\nabla\bar{p}\|^2+\frac{7\zeta}{9}\|\nabla\bar{q}\|^2+C\big(\|\Delta{Q}_1\|^2+\|\Delta{Q}_2\|^2+1\big)\big(\|\bar{p}\|^2+\|\bar{q}\|^2\big).
\end{equation}
Combining our estimates we have
\begin{equation}
\frac12\frac{d}{dt}\big(\|\bar{p}\|^2+\|\bar{q}\|^2\big) \leq
C\big(\|\Delta{Q}_1\|^2+\|\Delta{Q}_2\|^2+1\big)\big(\|\bar{p}\|^2+\|\bar{q}\|^2\big),
\ \ \ \forall t>0. \label{uniqueness inequality}
\end{equation}
Here $C$ is a positive constant that depends on $Q_0$, $\tilde{Q}$,
and the coefficients of the system. Using Proposition
\ref{proposition on 2D lower bound} and \eqref{eqnEE}, then a direct
application of Gronwall's inequality leads to \eqref{continuous
dependence equation}
\end{proof}%}}}

\section{Blow up for large  initial data}%{{{1
\label{sec:blow-up}
In this section we aim to prove Theorem~\ref{thmBlowUp} by constructing (large enough) initial data for
which the solution of \eqref{eqnQExpansion} exhibits a finite
time blow-up of the $L^2$ norm.
For this purpose we use a hedgehog type ansatz
\begin{equation}\label{eqnHA}
Q_{ij}(t,x) = \theta(t, |x|) S_{ij}, \qquad\text{where } \ S_{ij} =
\Big( \frac{x_ix_j}{|x|^2}-\frac{\delta_{ij}}{2}\Big),\, i,j=1,2
\end{equation}
on the spherical domain $B_{R_1}(0)\setminus B_{R_0}(0)$.
Using rotational symmetry of the ansatz and domain, we reduce the evolution of $Q$ to a scalar one dimensional scalar PDE for $\theta$.
For this it suffices to only take boundary conditions for $\theta$.
It turns out that boundary conditions of the form
%
%\footnote{Mon 06/10 GI. Hmm. I remember requiring $\theta(t, R_0) =
%\theta(t, R_1) = 0$. How did this $\geq 0$ magic happen...? (I did
%not have time to check that all the boundary terms go away with this
%weakened assumption, but hopefully will do this eventually.) \\
%{Saturday 06/15 by Xiang:} Note that in this case,
%$\theta_{-}=\partial_t\theta_{-}=0$ on the boundary. We actually
%multiply the equation \eqref{EqThetaReduced} twice, with the test
%function being $\theta_{-}$ and $\partial_t\theta_{-}$,
%respectively.  }
\begin{equation}
  \theta(t, R_0)=\theta(t, R_1)\ge 0, \ \ \forall t>0.
\label{homogeneous BC}
\end{equation}
are enough for our purposes.
The main result of this section shows that any solution to~\eqref{eqnQExpansion} of the form~\eqref{eqnHA} with boundary conditions~\eqref{homogeneous BC} and large enough initial data blows up in finite time.

We begin with an evolution equation for $\theta$.
\begin{lemma}\label{lmaTheta}
Let $Q$ be of the form~\eqref{eqnHA}. Then $Q$
 is a smooth solution of~\eqref{eqnQExpansion} if and only if $\theta$ is a smooth solution of

\begin{equation}\partial_t\theta=L_4\bigg(\frac{(\theta')^2}{2}+\frac{\theta\theta'}{r}+\theta\theta''+\frac{6\theta^2}{r^2}\bigg)
+\zeta\theta''+\frac{\zeta\theta'}{r}-\frac{4\zeta\theta}{r^2}-a\theta-\frac{c\theta^3}{2},
 \label{EqThetaReduced}\end{equation}
where $\zeta$ is defined in \eqref{zeta}.
\end{lemma}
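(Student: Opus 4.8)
The plan is to substitute the hedgehog ansatz~\eqref{eqnHA} directly into the expanded evolution equation~\eqref{eqnQExpansion} and exploit the algebraic structure of the matrix $S_{ij} = x_ix_j/|x|^2 - \delta_{ij}/2$. First I would record the elementary identities for the radial matrix field: with $r = |x|$ and $\hat x_i = x_i/r$ one has $S_{ij} = \hat x_i \hat x_j - \tfrac12\delta_{ij}$, $\tr S = 0$, $S$ symmetric, $|S|^2 = \tr(S^2) = \tfrac12$, and $\partial_k \hat x_i = (\delta_{ik} - \hat x_i\hat x_k)/r$. From these I would compute $\partial_k S_{ij}$, $\partial_k\partial_l S_{ij}$, $\Delta S_{ij}$, and the contractions $S_{lk}\partial_k S_{ij}$, $\partial_j S_{ik}$, $\partial_k\partial_j S_{ik}$, etc. The key observation is that each term in~\eqref{eqnQExpansion}, after inserting $Q_{ij} = \theta(t,r)S_{ij}$, collapses to a scalar function of $(t,r)$ times $S_{ij}$ (plus possibly a multiple of $\delta_{ij}$, which must cancel because $Q$ is traceless — this is a useful consistency check). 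So the strategy is: compute the coefficient of $S_{ij}$ coming from each group of terms, then sum.

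Concretely, I would handle the terms in~\eqref{eqnQExpansion} in blocks. The Laplacian term $2L_1\Delta Q_{ij}$ produces $2L_1(\theta'' + \theta'/r)S_{ij} + 2L_1\theta\,\Delta S_{ij}$, and a direct computation gives $\Delta S_{ij} = -4S_{ij}/r^2$ in two dimensions (since $\Delta(\hat x_i\hat x_j) = -2\hat x_i\hat x_j/r^2 + \delta_{ij}/r^2$ after contracting, and subtracting the trace part), yielding the $-8L_1\theta/r^2$ contribution. The $(L_2+L_3)$ block involves $\partial_j\partial_k Q_{ik} + \partial_i\partial_k Q_{jk} - \partial_l\partial_k Q_{lk}\delta_{ij}$; here one computes $\partial_k(\theta S_{ik}) = \theta' \hat x_k S_{ik} + \theta \partial_k S_{ik}$, uses $\hat x_k S_{ik} = \tfrac12 \hat x_i$ and $\partial_k S_{ik} = \tfrac{3}{2}\hat x_i/r$ (valid for $d=2$), then differentiates again; I expect these to combine into the same differential operator $\theta'' + \theta'/r - 4\theta/r^2$ up to the factor encoded in $\zeta = 2L_1 + L_2 + L_3$. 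The $L_4$ quadratic-gradient terms $2L_4\partial_l Q_{ij}\partial_k Q_{lk} + 2L_4 Q_{lk}\partial_l\partial_k Q_{ij} - L_4\partial_i Q_{kl}\partial_j Q_{kl} + \tfrac{L_4}{2}|\nabla Q|^2\delta_{ij}$ require the most care: I would compute $|\nabla Q|^2 = \partial_k Q_{ij}\partial_k Q_{ij}$, which gives $(\theta')^2|S|^2 + 2\theta\theta' \hat x_k S_{ij}\partial_k S_{ij} + \theta^2 \partial_k S_{ij}\partial_k S_{ij}$; using $|S|^2 = \tfrac12$, $S_{ij}\partial_k S_{ij} = 0$ (since $|S|^2$ is constant), and $\partial_k S_{ij}\partial_k S_{ij} = $ (a computable constant)$/r^2$, this simplifies substantially, and similarly for the mixed term $\partial_i Q_{kl}\partial_j Q_{kl}$ whose traceless part must be proportional to $S_{ij}$. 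Finally the bulk terms $-aQ_{ij} - c\tr(Q^2)Q_{ij} = -a\theta S_{ij} - c\theta^2|S|^2\theta S_{ij} = -a\theta S_{ij} - \tfrac{c}{2}\theta^3 S_{ij}$ give the last two terms of~\eqref{EqThetaReduced} directly.

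Having assembled all contributions as (scalar)$\times S_{ij}$, I equate with $\partial_t Q_{ij} = (\partial_t\theta)S_{ij}$ and read off the scalar PDE; since $S_{ij}$ is not identically zero, $Q$ solves~\eqref{eqnQExpansion} if and only if the scalar coefficients match, giving~\eqref{EqThetaReduced}. For the converse direction there is nothing extra to do: the computation is an identity in both directions, so a smooth $\theta$ solving~\eqref{EqThetaReduced} yields via~\eqref{eqnHA} a smooth solution of~\eqref{eqnQExpansion} (smoothness of $Q$ away from the origin follows from smoothness of $\theta$ and of $S$ on the annulus, which excludes $r=0$). The main obstacle is purely computational bookkeeping: correctly evaluating the second-derivative contractions of $S_{ij}$ in $d=2$ and making sure all the spurious $\delta_{ij}$ pieces cancel (they must, by tracelessness, but catching an arithmetic slip is easy), and in particular pinning down the precise numerical coefficients $\tfrac12$, $1$, $1$, and $6$ in front of $(\theta')^2$, $\theta\theta'/r$, $\theta\theta''$, and $\theta^2/r^2$ in the $L_4$ group. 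I would double-check the $6\theta^2/r^2$ coefficient independently, e.g.\ by evaluating both sides at a convenient point or by a symbolic computation, since that term is the one driving the blow-up and an error there would be fatal to the subsequent argument. These lengthy but routine computations are exactly the sort the authors relegate to Appendix~\ref{sxnCalculations}.
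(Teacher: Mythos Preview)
Your proposal is correct and follows essentially the same approach as the paper's Appendix~\ref{sxnCalculations}: direct substitution of the ansatz $Q_{ij}=\theta S_{ij}$ into~\eqref{eqnQExpansion}, term-by-term computation of the $L_1$, $L_2+L_3$, $L_4$, and bulk contributions, and reading off the scalar coefficient of $S_{ij}$ (the paper works directly in $x_i/|x|$ coordinates rather than precomputing identities for $S$, but this is only a stylistic difference). One small slip to fix when you carry it out: in $d=2$ one has $\partial_k S_{ik}=\hat x_i/r$, not $\tfrac{3}{2}\hat x_i/r$, consistent with the paper's intermediate result $Q_{lk,k}=(\theta'/2+\theta/r)\hat x_l$.
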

\begin{remark}
  By the coercivity condition \eqref{eqn2DCoercivity} we know $\zeta > 0$.
\end{remark}

Postponing the proof of Lemma~\ref{lmaTheta} to
Appendix~\ref{sxnCalculations}, we prove Theorem~\ref{thmBlowUp}.

%%%%%%%%%%%%%%%%%%%%%%%%%%%%%%%%%%%%%%%%%%%%%%%%%%%%%%%%%%%%%%%%%%%%%%%%%%%%%%%%%%%%%%%%%%
\begin{proof}[Proof of Theorem~\ref{thmBlowUp}]
  Let $\theta_-=-\min\{\theta,0\}$. Multiplying equation \eqref{EqThetaReduced} by $-\theta_- r$,
integrating over $[R_0, R_1]$ and integrating by parts gives
\begin{align}
\frac{1}{2} \frac{d}{dt}&\INT \theta_-^2 r \, dr \non\\
&= -L_4\INT \Big[ \frac{\big(\theta'_- \big)^2 \theta_-}{2}r +
\theta_-^2 \theta_-' + \frac{6\theta_-^3}{r}  \Big]\, dr-L_4\INT \theta_-^2 \theta_-'' r \, dr-\zeta\INT (\theta_-')^2r\,dr\non\\
&\qquad-\zeta\INT \theta'_-\theta_-\,dr+\zeta\INT
\theta'_-\theta_-\,dr
    -4\zeta\INT \frac{\theta_-^2}{r}\,dr-\INT\Big(a\theta_-^2+\frac{c}{2}\theta_-^4\Big)\,r\,dr\non\\
&= -L_4\INT \Big[ \frac{(\theta'_-)^2 \theta_-}{2} r +
\theta_-^2\theta_-' + \frac{6\theta_-^3}{r}  \Big] \, dr
+L_4\INT\Big[ \theta_-^2 \theta_-' + 2( \theta_-' )^2 \theta_- r \Big] \, dr\non\\
&\qquad-\zeta\INT (\theta_-')^2r\,dr-4\zeta\INT \frac{\theta_-^2}{r}\,dr-\INT \Big(a\theta_-^2+\frac{c}{2}\theta_-^4\Big)\,r\,dr\non\\
&= \frac{3L_4}{2} \INT ( \theta_-')^2 \theta_- r\,dr - 6L_4 \INT
\frac{\theta_-^3}{r}\,dr-\zeta\INT(\theta_-')^2r\,dr-4\zeta\INT\frac{\theta_-^2}{r}\,dr\non\\
&\qquad-\INT \Big(a\theta_-^2+\frac{c}{2}\theta_-^4\Big)r\,dr.
\label{EstThetaL2}
\end{align}
%%%%%%%%%%%%%%%%%%%%%%%%%%%%%%%%%%%%%%%%%%%%%%%%%%%%%%%%%%%%%%%%%%%%%%%%%%%%%%%%%%%%%%%%%%
Next multiplying~\eqref{EqThetaReduced} by $-\partial_t\theta_-r$
and integrating over $[R_0,R_1]$, and integrating by parts wherever
necessary gives
\begin{align*}
0 &\leq \INT(\partial_t\theta_-\partial_t\theta_-)r\,dr \\
&=-L_4\INT
\partial_t\theta_-\Big[\frac{(\theta_-')^2}{2}+\frac{\theta_-\theta_-'}{r}
+\frac{6\theta_{-}^2}{r^2}\Big]r\,dr+L_4\INT \partial_t\theta_-'\theta_-\theta_-' r\,dr \\
&\qquad+L_4\INT \partial_t\theta_-(\theta_-')^2r\,dr+L_4\INT
\partial_t\theta_-\theta_-\theta_-'\,dr
-\zeta\INT\partial_t\theta_-'\theta_-'r\,dr \\
&\qquad-\zeta\INT\partial_t\theta_-\theta_-'\,dr+\zeta\INT\partial_t\theta_-\theta_-'\,dr-2\zeta\frac{d}{dt}\INT\frac{\theta_-^2}{r}\,dr
-\frac{d}{dt}\INT \Big(\frac{a\theta_-^2}{2}+\frac{c\theta_-^4}{8}\Big)\,r\,dr \non\\
&=\frac{d}{dt}\INT\left\{L_4\theta_-\Big[\frac{(\theta_-')^2}{2}-\frac{2\theta_-^2}{r^2}\Big]
{-}\zeta\Big[\frac{(\theta_-')^2}{2} {+}\frac{2\theta_-^2}{r^2}\Big]
{-}\Big(\frac{a}{2}\theta_-^2+\frac{c}{8}\theta_-^4\Big)\right\}r\,dr.
\end{align*}
Hence if we denote by
\begin{equation}
\mathcal{F}(t) \defeq
\INT\left\{L_4\theta_-\Big[\frac{(\theta_-')^2}{2}-\frac{2\theta_-^2}{r^2}\Big]
{-}\zeta\Big[\frac{(\theta_-')^2}{2} {+}\frac{2\theta_-^2}{r^2}\Big]
{-}\Big(\frac{a}{2}\theta_-^2+\frac{c}{8}\theta_-^4\Big)\right\}r\,dr,
\end{equation}
we have $\mathcal{F}(t){\geq}\mathcal{F}(0)$ and
%%%%%%%%%%%%%%%%%%%%%%%%%%%%%%%%%%%%%%%%%%%%%%%%%%%%%%%%%%%%%%%%%%%%%%%%%%%%%%%%%%%%%%%%%%
\begin{align}
-2\zeta\INT (\theta_-')^2 r\,dr &\geq
4\mathcal{F}(0){-}\INT\Big(2L_4\theta_-\big[(\theta_-')^2-\frac{4\theta_-^2}{r^2}\big]
-\frac{8\zeta\theta_-^2}{r^2}\Big)r\,dr \non\\
&\qquad+\INT \Big(2a\theta_-^2+\frac{c}{2}\theta_-^4\Big)r\,dr
\label{EstGradientTheta}
\end{align}
We divide the argument into two cases: $L_4 < 0$ and $L_4 > 0$.
%%%%%%%%%%%%%%%%%%%%%%%%%%%%%%%%%%%%%%%%%%%%%%%%%%%%%%%%%%%%%%%%%%%%%%%%%%%%%%%%%%%%%%%%%%
Suppose first $L_4 < 0$. Then $\zeta>0$ shows that
$-\zeta\INT(\theta_-')^2r\,dr\geq -2\zeta\INT(\theta_-')^2r\,dr$.
Using \eqref{EstGradientTheta} in \eqref{EstThetaL2}, we obtain:
\begin{align*}
\frac{1}{2}\frac{d}{dt}\INT \theta_-^2 &r \, dr  \\
\ge  \frac{3L_4}{2}&\INT \big( \theta_-' \big)^2 \theta_- r\,dr -
6L_4 \INT
\frac{\theta_-^3}{r}\,dr-4\zeta\INT\frac{\theta_-^2}{r}\,dr-\INT
\Big(a\theta_-^2+\frac{c}{2}\theta_-^4\Big)r\,dr
\non\\
+4&\mathcal{F}(0)
{-}\INT\Big(2L_4\theta_-\big[(\theta_-')^2-\frac{4\theta_-^2}{r^2}\big]-\frac{8\zeta\theta_-^2}{r^2}\Big)r\,dr+\INT
\Big(2a\theta_-^2+\frac{c}{2}\theta_-^4\Big)r\,dr,
\end{align*}
which becomes:
%%%%%%%%%%%%%%%%%%%%%%%%%%%%%%%%%%%%%%%%%%%%%%%%%%%%%%%%%%%%%%%%%%%%%%%%%%%%%%%%%%%%%%%%%%
\begin{align}\label{BlowUpEqMinus}
\frac{1}{2} \frac{d}{dt}&\INT \theta_-^2 r \, dr \non\\
&\ge {-\frac{L_4}{2}}\INT (\theta_-')^2 \theta_- r\,dr +2L_4 \INT
\frac{\theta_-^3}{r}\,dr+4\zeta\INT\frac{\theta_-^2}{r}\,dr+4\F(0)+a\INT
\theta_-^2r\,dr \non\\
&\geq{-\frac{L_4}{2}}R_0\INT (\theta_-')^2 \theta_-\,dr
+\frac{2L_4}{R_0}\INT{\theta_-^3}\,dr+4\F(0)-|a|\INT
\theta_-^2r\,dr.
\end{align}
Using Poincar\'e's inequality, we get
\[
\int_{R_0}^{R_1} \left( \theta_-' \right)^2 \theta_-  \, dr \geq
\frac{4 }{9} \int_{R_0}^{R_1} \Big[ \big( \theta_-^{3/2}\big)'
\Big]^2 \, dr \geq \frac{4 \pi^2}{9(R_1 - R_0)^2} \int_{R_0}^{R_1}
\theta_-^3 \, dr.
\]
Therefore, if we choose $R_0, R_1$ so that
\begin{equation} \frac{R_0^2
\pi^2}{9(R_1 - R_0)^2}>1,
 \end{equation}
the inequality~\eqref{BlowUpEqMinus} reduces to
\begin{align}\label{Blow_up_equ}
\frac{1}{2}\frac{d}{dt}&\INT\theta_-^2 r \, dr \non\\
&\geq
-\frac{L_4}{2}\Big[\frac{4R_0\pi^2}{9(R_1-R_0)^2}-\frac{4}{R_0}
\Big]\INT \theta_-^3\,dr-|a|\INT
\theta_-^2rdr  +4\mathcal{F}(0) \non\\
&\geq M_0\Big(\INT \theta_-^2r\,dr
\Big)^{\frac32}-|a|\INT\theta_-^2r\,dr+4\mathcal{F}(0).
\end{align}
Here
$$
   M_0 \defeq -\frac{2L_4R_0}{\sqrt{R_1^4-R_0^4}}\Big[\frac{\pi^2}{9(R_1-R_0)^2}-\frac{1}{R_0^2}\Big].
$$

Consequently, if one assumes $\INT\theta_{0-}^2 r\,dr$ is suitably
large, then \eqref{Blow_up_equ} will force $\INT \theta_-^2 \, r \,
dr \to \infty$ in finite time, concluding the proof when $L_4 < 0$.
The above argument with $\theta_{-}$ replaced by $\theta_{+}$ will
handle the case when $L_4 > 0$.
\end{proof}

\begin{remark}
Our technique does not seem to have a straightforward extension to domains which are not radially symmetric.
In such domains, we do not know if a similar phenomenon occurs for large enough initial data.
\end{remark}

\section{The physicality preservation argument}%{{{1
\label{sec:physicality}

Our aim in this section is to prove Proposition~\ref{prop:physicality}, showing that certain eigenvalue constraints (the so-called
 physicality constraints) are preserved by the evolution equation~\eqref{eqnQExpansion}.
This issue is more subtle than the preservation of
 the $L^\infty$ norm.

%We have
\begin{proof}[Proof of Proposition~\ref{prop:physicality}]%{{{
Under the assumption $L_2+L_3=L_4=0$ and in $d=2,3$
system~\eqref{eqnQExpansion} becomes
\begin{equation}\label{eq:QsystemSimple}
\frac{\partial Q_{ij}}{\partial
t}=2L_1\Delta{Q_{ij}}-aQ_{ij}+b\Big(Q_{ik}Q_{kj}-\frac{\tr(Q^2)}{d}\delta_{ij}\Big)-c
\tr(Q^2)Q_{ij},
\end{equation}
with $i,j=1,\dots,d$. Note that when $d=2$, the constant $2L_1$ is replaced
by $\zeta=2L_1+L_2+L_3>0$ in \eqref{eq:QsystemSimple}.
Thus the argument below is also valid even if $L_2+L_3\neq 0$.
For consistency, we only consider $L_2+L_3=0$.

The proof %
%\footnote{ Wed 06/12 GI: Here's an idea I had, which I didn't have
%time to check the details: Let $P = Q + \frac{s_-}{3} I$, and write
%down an equation for $P$. Now can we directly check that $\langle v,
%Pv \rangle \geq 0$ for all $v \in \R^3$? We should have an explicit
%equation for $\langle v, Pv \rangle \geq 0$, and might be able to
%check this with bare hands? This would give a shorter proof. \\
%\textcolor[rgb]{0.98,0.00,0.00}{Saturday 06/15 by Xiang}: It seems
%to be a cute idea. However, I was not able to handle those terms,
%such as
%$$
%  \langle v, P^2v \rangle
%$$
%arising from the bulk energy part. Note that
%$$
%   \langle v, P^2v \rangle \neq \big(\langle v, Pv\rangle\big)^2
%$$
% {\color{green}Dani: 07/15 It would be nice indeed but I couldn't make it work.
% Also one would need to show that $R \defeq I-\frac{2}{3}s_+\Id$ is negative definite.
% Both the positivity of $P$ and negativity of $R$ are probably preserved by the heat
% part but not clear about the nonlinearity as Xiang said...} }
 will be done by
using a nonlinear Trotter product formula (see for instance Ch.
$15$, Section $5$ in \cite{T97}). To briefly describe the idea, let
us denote by $e^{2tL_1\Delta}R$ the solution of the heat equation in
the whole space, starting from initial data $R$ (where $R$ is
assumed to take values into the space of $d\times d$ matrices):
\begin{equation}
\left(e^{2tL_1\Delta}R\right)_{ij}(t,x)=\frac{1}{(4\pi
t)^{d/2}}\int_{\RR^d} e^{\frac{|x-y|^2}{8tL_1}}
R_{ij}(y)\,dy,\,\,\,\,\,\,\, i,j=1,\dots,d,
\end{equation}
and by $S(t,\bar S)\in\mcS^{(d)}$ the flow generated by the ODE part
of \eqref{eq:QsystemSimple} i.e. $S(t,\bar S)$ satisfies:
\begin{equation}\label{eq:Sdef}
 \left\{\begin{array}{ll} \frac{\partial}{\partial
t}S_{ij}(t,\bar
S)&=-aS_{ij}+b\Big(S_{ik}S_{kj}-\frac{\tr(S^2)}{d}\delta_{ij}\Big)-c
\tr(S^2)S_{ij}\\\
S(0,\bar S)_{ij}&=\bar S_{ij}\end{array}\right.
\end{equation}
with $i,j=1,\dots,d$.

Then the Trotter formula provides a way of expressing the solution
of \eqref{eq:QsystemSimple} as a limit of successive superpositions
of solutions of the heat equation part and the ODE part, namely by
denoting $Q(t,x)$ the solution of \eqref{eq:QsystemSimple} starting
from initial data $Q_0(x)$ we have, loosely speaking:
$$
  Q(t,x)=\lim_{n\to\infty}\left(e^{2TL_1/n\Delta}S(T/n,\cdot)\right)^nQ_0, \ \ \ \ \forall t\in [0,T]
$$

Let us  note now that a set of the form

%\footnote{Wed 06/12 GI: The $a$ and $b$ here are different from the
%$a, b, c$ used in the equation. Perhaps $\alpha, \beta$ instead?\\
%\textcolor[rgb]{0.98,0.00,0.00}{Saturday 06/15 by Xiang}: $a, b$
%have been replaced by $\beta, \gamma$.}
$$\{Q\in\R^{d\times d},
Q=Q^t; \beta\le\lambda_i(Q)\le \gamma,\textrm{ for all eigenvalues
}\lambda_i(Q)\textrm{ of }Q\}$$ is convex (as the largest eigenvalue
is a convex function of the matrix, while the smallest eigenvalue is
a concave function, see for instance \cite{BV04}).

It is then clear that if we manage to show that both
$e^{2tL_1\Delta}$ and $S(t,\cdot)$ preserve the closed convex hull
of the range of the initial data then this will also hold for the
limit $Q(t,x)$. The arguments consist of three steps:

\noindent\textbf{Step 1}: The convex hull preservation under the
heat flow.

Denote
$$\Phi_n(y)\defeq\left\{\begin{array}{ll} (4\pi t)^{d/2}\big(\int_{B_n(0)}
e^{-\frac{|y|^2}{8tL_1}}\,dy\big)^{-1} e^{-\frac{|y|^2}{8tL_1}}& \textrm{ for }|y|\le n,\\
0 &\textrm{ for }|y|>n.\end{array}\right.$$ For any $f\in
L^1(\RR^d)$, we obtain that
\begin{equation}
\frac{1}{(4\pi t)^{d/2}}\int_{\RR^d} f(x-y)\Phi_n(y)\,dy\to
e^{2tL_1\Delta} f(x),
\end{equation}
pointwise as $n\to\infty$.

Now, let us observe that the measures $\mu_n(y)=\Phi_n(y)\,dy$
belong to the set $\mathcal{M}_{+1}(B_n(0))$ of regular Borel
probability measures supported on $B_n(0)$. The extremal set of the
convex set $\mathcal{M}_{+1}(B_n(0))$ consists of delta measures
$\delta_x$  with $x\in B_n(0)$ (where $\delta_x(E)=1$ if and only if
$x\in E$ for any Borel set $E\subset B_n(0)$; see for instance
\cite{BS11}, Ex. $8.16$, p. $129$). On the other hand, by
Krein-Milman theorem (see also Ch. $8$ in \cite{BS11}), we know that
$\mu_n$ can be written as a limit of convex combinations of
extremals in the weak-star topology  of $\mathcal{M}_{+1}(B_n(0))$
interpreted as a subset of the dual space
$\left[C(B_n(0))\right]^*$, i.e.
$$
  \sum_{j=1}^{J(k)} \theta_j^k \delta_{x_j^k} \stackrel{\star}{\rightharpoonup}\mu_n  \ \ \  \textrm{ as} \ \ k\to\infty,
$$
with the convexity condition
$$
  \sum_{j=1}^{J(k)}\theta_j^k=1,
$$
where $\theta_j^k\ge 0, \forall 1\le j\le J(k),k\in\NN.$ Therefore,
for any $x\in \RR^d$ and $n$ large enough so that $|x|<n$, it holds
$$
  \lim_{k\to\infty}\sum_{j=1}^{J(k)} \theta_j^k f(x_j^k-x)=\int_{\RR^d} f(x-y)d\mu_n(y)\,dy.
$$
After passing to the limit $n\to\infty$, we henceforth get
$(e^{2tL_1\Delta}f)(x)$ is in the convex hull of the image of the
initial data $f$.

\noindent\textbf{Step 2:} The physicality preservation under the
ODE.

We divide the argument into two cases.

\noindent\textbf{The $2D$ case:} We consider the ODE:
\begin{equation}\label{ode2D:abstract}
\frac{d}{dt}Q=-\frac{\partial f_B}{\partial
Q}+\frac{1}{2}\tr\left(\frac{\partial f_B}{\partial Q}\right)\Id,
\end{equation}
for $Q$ denoting $2\times 2$ matrices, where we use the standard
bulk term:
\begin{equation}\label{bulk2D}
f_B(Q)=\frac{a}{2}\tr(Q^2)-\frac{b}{3}\tr(Q^3)+\frac{c}{4}\left(\tr(Q^2)\right)^2.
\end{equation}
Taking into account the specific form \eqref{bulk} of $f_B$, the
equation \eqref{ode:abstract} becomes:
\begin{equation}\label{ode2D:explicit}
\frac{d}{dt}Q=-aQ+b\Big(Q^2-\frac{1}{2}\tr(Q^2)\Id\Big)-cQ\tr(Q^2).
\end{equation}
%%%%%%%%%%%%%%%%%%%%%%%%%%%%%%%%%%%%%%%%%%%%%%%%%%%%%%%%%%%%%%%%%%%%%%%%%%%%%%
Multiplying the equation scalarly by $Q$, and using that $\tr(Q)=0$
and also the fact, specific to $2\times 2$ Q-tensors, that
$\tr(Q^3)=0$ we obtain:
%%%%%%%%%%%%%%%%%%%%%%%%%%%%%%%%%%%%%%%%%%%%%%%%%%%%%%%%%%%%%%%%%%%%%%%%%%%%%%
\begin{equation}\label{traceode2D:brute}
 \frac12\frac{d}{dt}|Q|^2=-a|Q|^2-c|Q|^4.
\end{equation}
%%%%%%%%%%%%%%%%%%%%%%%%%%%%%%%%%%%%%%%%%%%%%%%%%%%%%%%%%%%%%%%%%%%%%%%%%%%%%%
Let $g(|Q|)\defeq -a|Q|^2-c|Q|^4=-c|Q|^2(|Q|^2+\frac{a}{c})$. We
consider two possibilities:

{\it Case A: $a\ge 0$.}  Then $g(|Q|)<0$, for $|Q|\not=0$. Hence
\eqref{traceode2D:brute} implies $|Q(t)|^2\le |Q(0)|^2$.

{\it Case B: $a<0$.} Then
\begin{equation} \label{2dest:gneg}
 g(|Q|)<0, \ \ \ \textrm{for} \ |Q|^2>-\frac{a}{c}>0.
\end{equation}

We claim that
\begin{equation}\label{2Dest:preservation}
|Q(0)|\le \sqrt{-\frac{a}{c}}\Rightarrow |Q(t)|\le
\sqrt{-\frac{a}{c}}, \ \ \ \forall t>0.
\end{equation}

In order to prove the claim let us assume for contradiction that
there exists a $\varepsilon>0$ such that at some positive time
$|Q(t)|= \sqrt{-\frac{a}{c}}+\varepsilon$ and let us denote by $t_0$
the smallest such positive time. Then equation
\eqref{traceode2D:brute} together with \eqref{2dest:gneg} imply that
$\frac{d}{dt}|Q|^2<0$ hence there exists an earlier time
$t_{-1}<t_0$ so that $|Q(t_{-1})|=\sqrt{-\frac{a}{c}}+\varepsilon$
contradicting our hypothesis on $t_0$ and proving the claim
\eqref{2Dest:preservation}.

\noindent\textbf{The $3D$ case}: We consider the ODE:
\begin{equation}\label{ode:abstract}
\frac{d}{dt}Q=-\frac{\partial f_B}{\partial
Q}+\frac{1}{3}\tr\left(\frac{\partial f_B}{\partial Q}\right)\Id,
\end{equation}
where we use the standard bulk term:
\begin{equation}\label{bulk}
f_B(Q)=\frac{a}{2}\tr(Q^2)-\frac{b}{3}\tr(Q^3)+\frac{c}{4}\left(\tr(Q^2)\right)^2.
\end{equation}
%%%%%%%%%%%%%%%%%%%%%%%%%%%%%%%%%%%%%%%%%%%%%%%%%%%%%%%%%%%%%%%%%%%%%%%%%%%%%%
Taking into account the specific form \eqref{bulk} of $f_B$, the
equation \eqref{ode:abstract} becomes:
\begin{equation}\label{ode:explicit}
\frac{d}{dt}Q=-aQ+b\Big(Q^2-\frac{1}{3}\tr(Q^2)\Id\Big)-cQ\tr(Q^2).
\end{equation}
%%%%%%%%%%%%%%%%%%%%%%%%%%%%%%%%%%%%%%%%%%%%%%%%%%%%%%%%%%%%%%%%%%%%%%%%%%%%%%
Now take the scalar product of this equation with $Q$. (Recall,
scalar product of matrices $A, B$ is defined by $(A,B) \defeq
\tr(AB)$ and $|A|=\sqrt{\tr(A^2)}$.) Using additionally the fact
that $\tr(Q)=0$ gives
\begin{equation}\label{traceode:brute}
\frac12\frac{d}{dt}|Q|^2=-a|Q|^2+b\tr(Q^3)-c|Q|^4.
\end{equation}
%%%%%%%%%%%%%%%%%%%%%%%%%%%%%%%%%%%%%%%%%%%%%%%%%%%%%%%%%%%%%%%%%%%%%%%%%%%%%%
We recall that (see for instance \cite{MZ10}) we have $
|\tr(Q^3)|\le\frac{|Q|^3}{\sqrt{6}}$ which used in
\eqref{traceode:brute} (under assumptions \eqref{AssumptionsBulk})
implies:
\begin{equation}\label{traceode:ineq}
\frac{d}{dt}|Q|^2\le-a|Q|^2+\frac{b}{\sqrt{6}}|Q|^3-c|Q|^4.
\end{equation}
%%%%%%%%%%%%%%%%%%%%%%%%%%%%%%%%%%%%%%%%%%%%%%%%%%%%%%%%%%%%%%%%%%%%%%%%%%%%%%
Let us denote $h(Q) \defeq -a|Q|^2+\frac{b}{\sqrt{6}}|Q|^3-c|Q|^4$.
Then the roots of $\frac{h(Q)}{|Q|^2}$ are
$\sqrt{\frac{2}{3}}s_\pm$, with \begin{equation}\label{spm}
s_\pm=\frac{b\pm\sqrt{b^2-24ac}}{4c}. \
\end{equation}
Then
\begin{equation}\label{hQpositive}
h(|Q|)<0\textrm{ for }|Q|>\sqrt{\frac{2}{3}}s_+.
\end{equation}

Taking into account \eqref{traceode:ineq} we claim that, if we
denote by $Q_0$ the initial data of the ODE \eqref{ode:explicit}
\begin{equation}\label{boundpreservation}
|Q_0|^2\le \frac{2}{3}s_+^2\Rightarrow |Q(t)|^2\le \frac{2}{3}s_+^2,
\ \ \forall t>0.
\end{equation}

Indeed, if our claim were false, for any $\varepsilon>0$, let us
denote by $t_0(\varepsilon)$ the first time when $|Q|^2$ reaches the
value $\frac{2}{3}s_+^2+\varepsilon$, i.e.
$$
  |Q(t_0)|^2=\frac{2}{3}s_+^2+\varepsilon, \ \textrm{ and } \ |Q(t)|^2<\frac{2}{3}s_+^2+\varepsilon, \ \ \forall t<t_0.
$$

Then \eqref{hQpositive} and  \eqref{traceode:ineq} imply that
$\frac{d}{dt}|Q(t_0)|^2<0$. Hence there exists a time $\tilde
t_0<t_0$, such that $|Q(\tilde t_0)|>\frac{2}{3}s_+^2+\varepsilon$,
which contradicts our choice of $t_0$. Thus for $|Q_0|^2\le
\frac{2}{3}s_+^2$, the equation \eqref{ode:explicit} has a solution
that is bounded, and the right hand side of \eqref{ode:explicit} is
globally Lipschitz on the ball where the solution evolves. As a
consequence, we obtain that for $|Q_0|^2\le \frac{2}{3}s_+^2$, the
equation \eqref{ode:explicit} has a unique global solution evolving
with the property that $|Q(t)|^2\le \frac{2}{3}s_+^2$.

Let us consider now the system:
\begin{align}\label{eigensystem}
\frac{d\lambda_1}{dt}&=-\lambda_1\big[2c(\lambda_1^2+\lambda_2^2+\lambda_1\lambda_2)+a\big]+b\Big(\frac{\lambda_1^2}{3}
-\frac{2}{3}\lambda_2^2-\frac{2}{3}\lambda_1\lambda_2\Big),\nonumber\\
\frac{d\lambda_2}{dt}&=-\lambda_2\big[2c(\lambda_1^2+\lambda_2^2+\lambda_1\lambda_2)+a\big]
+b\Big(\frac{\lambda_2^2}{3}-\frac{2}{3}\lambda_1^2-\frac{2}{3}\lambda_1\lambda_2\Big).
\end{align}
The right hand side of the system is a locally Lipschitz function so
the system has a solution locally in time (in fact with some more
work global  in time and bounded, using arguments similar to the
ones before for the matrix system).

On the other hand, let us note now that if we take
$$
  Q_0=\left(\begin{array}{lll} \lambda_1^0 & 0 & 0 \\ 0 &\lambda_2^0 & 0\\ 0 & 0 & -\lambda_1^0-\lambda_2^0\end{array}\right),
$$
then
$$
  \bar Q(t)=\left(\begin{array}{lll} \lambda_1(t) & 0 & 0 \\ 0 &\lambda_2(t) & 0\\ 0 & 0 & -\lambda_1(t)-\lambda_2(t)\end{array}\right).
$$
Hence if $\lambda_1(t),\lambda_2(t)$ are solutions of
\eqref{eigensystem} with initial data $(\lambda_1^0,\lambda_2^0)$
then $\bar Q(t)$ is a solution of \eqref{ode:explicit} with initial
data $Q_0$. On the other hand, by uniqueness of solutions of
\eqref{ode:explicit}, it must be the only solution corresponding to
the diagonal initial data $Q_0$. Thus we have shown that a diagonal
initial data will generate a diagonal solution.

For an arbitrary, non-diagonal initial data $\tilde Q_0$, since
$\tilde Q_0$ is a symmetric matrix, there exists a matrix $R\in
O(3)$, such that
$$
  R\tilde
  Q_0R^t=\left(\begin{array}{lll}\tilde\lambda_1^0 & 0 & 0 \\ 0
  &\tilde\lambda_2^0 & 0\\ 0 & 0 &
   -\tilde\lambda_1^0-\tilde\lambda_2^0\end{array}\right),
$$
where $(\tilde \lambda_1^0,\tilde
\lambda_2^0,-\tilde\lambda_1^0-\tilde\lambda_2^0)$ are the
eigenvalues of $\tilde Q_0$. If $Q(t)$ is a solution of
\eqref{ode:explicit} with initial data $\tilde Q_0$, then
multiplying on the left by {\it the time independent matrix }$R$,
and on the right by {\it the time independent matrix} $R^t$, using
the fact that $RR^t=\Id$ (as $R\in O(3)$), we obtain the following
equation:
\begin{align}
\frac{d}{dt}RQ(t)R^t=-&aRQ(t)R^t+b\left(RQ(t)R^tRQ(t)R^t-\frac{1}{3}\tr (RQ(t)R^tRQ(t)R^t)\Id\right)\non\\
&-cRQ(t)R^t\tr\big(RQ(t)R^tRQ(t)R^t\big).
\end{align}
Hence if we denote by $M(t) \defeq RQ(t)R^t$, we conclude that $M$
satisfies equation \eqref{ode:explicit}
 with initial data
$$
   M_0 \defeq R\tilde Q_0R^t=\left(\begin{array}{lll}\tilde\lambda_1^0 & 0 & 0 \\ 0 &\tilde\lambda_2^0 & 0\\ 0 & 0 &
    -\tilde\lambda_1^0-\tilde\lambda_2^0\end{array}\right).
$$
Since the initial data is diagonal, we infer by previous arguments
that $M(t)$ is diagonal for all times and
$$
   M(t)=\left(\begin{array}{lll} \lambda_1(t) & 0 & 0 \\ 0 &\lambda_2(t) & 0\\ 0 & 0 &
    -\lambda_1(t)-\lambda_2(t)\end{array}\right),
$$
with $\lambda_1(t),\lambda_2(t)$ solutions of \eqref{eigensystem}
with initial data $(\tilde\lambda_1^0,\tilde\lambda_2^0)$. Thus we
obtain that
$$
  M(t)=RQ(t)R^t=\left(\begin{array}{lll} \lambda_1(t)
  & 0 & 0 \\ 0 &\lambda_2(t) & 0\\ 0 & 0 &
  -\lambda_1(t)-\lambda_2(t)\end{array}\right),
$$
hence
$$
  Q(t)=R^t\left(\begin{array}{lll} \lambda_1(t) & 0 & 0 \\ 0
  &\lambda_2(t) & 0\\ 0 & 0 &
  -\lambda_1(t)-\lambda_2(t)\end{array}\right)R.
$$
  This shows that we can reduce the study of the system \eqref{ode:explicit} with an
   arbitrary initial data to the study of the system \eqref{eigensystem}.

The bound \eqref{boundpreservation} expressed in terms of
eigenvalues $\lambda_1,\lambda_2$ becomes
\begin{equation}\label{boundpreservation+}
2\left[(\lambda_1^0)^2+(\lambda_2^0)^2+\lambda_1^0\lambda_2^0)\right]{\leq}\frac{2}{3}s_+^2,
\ \ \ \forall t\ge 0.
\end{equation}
Note that $\frac{3\lambda_i^2}{4}\le
\lambda_i^2+\mu^2+\lambda_i\mu$, hence the last bound implies
\begin{equation}\label{boundpreservation++}
2\left[(\lambda_1^0)^2+(\lambda_2^0)^2+\lambda_1^0\lambda_2^0)\right]\leq\frac{2}{3}s_+^2
\ \ \Rightarrow \ \ |\lambda_1(t)|,|\lambda_2(t)|\leq
\frac{2}{3}s_+, \ \ \ \forall t\ge 0.
\end{equation}

We consider now the difference $\lambda_1(t)-\lambda_2(t)$, and out
of inspection from the system \eqref{eigensystem} we see that it
satisfies an equation of the form:
$$
   \frac{d}{dt}(\lambda_1(t)-\lambda_2(t))=(\lambda_1(t)-\lambda_2(t))G(\lambda_1(t),\lambda_2(t)),
$$
for some function $G$. This shows that if $\lambda_1^0\le
\lambda_2^0$, then $\lambda_1(t)\le \lambda_2(t),\forall t>0$. We
assume without loss of generality that this is indeed the case.

We aim to show now that  $\lambda_1(0)\ge-\frac{s_+}{3}$ implies
$\lambda_1(t)\ge-\frac{s_+}{3}$  for all $t>0$. We assume for
contradiction that this is not the case and there exists  a first
time $t_0$ afterwhich $\lambda_1(t)+\frac{s_+}{3}$ becomes negative,
i.e. $\lambda_1(t_0)=-\frac{s_+}{3}$ and there exists a $\delta>0$
so that $\lambda_1(t)<-\frac{s+}{3}$ for $t\in(t_0,t_0+\delta)$. The
right hand side of equation \eqref{eigensystem} evaluated at $t_0$
becomes:
\begin{equation}\label{eq:sign-s3}
\frac{2}{3}(cs_+-b)(\lambda_2(t_0)+\frac{s_+}{3})(\lambda_2(t_0)-\frac{2s_+}{3}).
\end{equation}

Then equation \eqref{boundpreservation+} implies $\lambda_2(t_0)\in
[-\frac{s_+}{3},\frac{2s_+}{3}]$. If
$\lambda_2(t_0)\in\{-\frac{s_+}{3},\frac{2s_+}{3}\}$, then for all
$t>0$ we have
$\lambda_1(t)=-\frac{s_+}{3},\lambda_2(t)=\lambda_2(t_0)$, due to
the fact that the pairs
$(-\frac{s_+}{3},\frac{2s_+}{3}),(-\frac{s_+}{3},-\frac{s_+}{3})$
are stationary points of the system \eqref{eigensystem}. Thus we
assume without loss of generality that $\lambda_2(t_0)\in
(-\frac{s_+}{3},\frac{2s_+}{3})$ and henceforth, taking into account
assumption \eqref{restriction:a}, we infer that the expression in
\eqref{eq:sign-s3} is positive so $\frac{d\lambda_1}{dt}(t_0)>0$,
which contradicts our assumption that there exists a $\delta>0$ so
that $\lambda_1(t)<-\frac{s+}{3}$ for $t\in(t_0,t_0+\delta)$.

Thus we have shown that if $-\frac{s_+}{3}\le
\lambda_1^0\le\lambda_2^0\le\frac{2s_+}{3}$, then $\lambda_1(t)\in
[-\frac{s_+}{3},\frac{2s_+}{3}]$ for all $t>0$. The fact that
$\lambda_1(t)\le \lambda_2(t)$ for all times ensures
$-\frac{s+}{3}\le \lambda_2(t)$ for all times.

\noindent\textbf{Step 3}: The Trotter product formula

We use Proposition $5.3$ on p.$313$ in \cite{T97}. To this end, we
denote
$$
   V_n(t)\defeq
    e^{s\Delta}S(s,\cdot)\left(e^{2TL_1/n\Delta}S(T/n,\cdot)\right)^kQ_0,
$$
for $t=\frac{kT}{n}+s$ with $0\le s<\frac{T}{k}$. Then Proposition
$5.3$ ensures that we have:
\begin{equation}
 \|Q(t,\cdot)-V_n(t)\|_{H^k}\le C(\|Q_0\|_{H^k})n^{-\gamma},
\end{equation}
for $0<\gamma<1$, and all $t\in [0,T]$.
\end{proof}%}}}

%%%%%%%%%%%%%%%%%%%%%%%%%%%%%%%%%%%%%%%%%%%%%%%%%%%%%%%%%%%%%%%%%%%%%%%%%%%%%%%%%%%%%%%%%%
%}}}1
\appendix\appendixpage
\section{Derivation of the gradient flow equation}%{{{1
\label{sec:gradient flow derivation} \maketitle

Our goal in this subsection is to derive~\eqref{eqnQExpansion},
the equation for the gradient flow of $\E$.

\begin{proposition}%{{{
  The gradient flow defined by~\eqref{EquationQAbstract} satisfies~\eqref{eqnQExpansion}.
\end{proposition}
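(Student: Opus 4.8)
The plan is to compute the first variation $\frac{\delta\E}{\delta Q}$ explicitly, term by term, and then read off the effect of the Lagrange multipliers. Note that in \eqref{EquationQAbstract} the test functions $\varphi$ range over all of $C^\infty_c(\Omega,M^{d\times d}(\R))$ rather than only the symmetric traceless ones, so the computation of $\frac{\delta\E}{\delta Q}$ is an \emph{unconstrained} variation, and the constraints $\tr Q=0$, $Q=Q^T$ re-enter only through $\lambda$ and $\mu$.

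\emph{Step 1 (first variation).} For each of the seven summands of $\F=\F\el+\F\bulk$ I would expand $\tfrac{d}{dt}\big|_{t=0}\int_\Omega\F(Q+t\varphi)\,dx$ and integrate by parts in $x$ (no boundary terms, since $\varphi\in C^\infty_c$) until it has the form $\int_\Omega E_{ij}\varphi_{ij}\,dx$; this identifies $\frac{\delta\E}{\delta Q}$ with the matrix-valued function $E$. The quadratic elastic terms give $E^{(L_1)}_{ij}=-2L_1\Delta Q_{ij}$ and, after relabelling dummy indices and commuting derivatives, $E^{(L_2)}_{ij}+E^{(L_3)}_{ij}=-2(L_2+L_3)\,\partial_j\partial_kQ_{ik}$. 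The bulk terms, using $\tfrac{d}{dt}\tr\big((Q+t\varphi)^k\big)\big|_0=k\,\tr(Q^{k-1}\varphi)$ and $Q=Q^T$, give $E^{(a)}_{ij}=aQ_{ij}$, $E^{(b)}_{ij}=-b(Q^2)_{ij}$, $E^{(c)}_{ij}=c\,\tr(Q^2)\,Q_{ij}$. The cubic term $L_4Q_{lk}\partial_kQ_{ij}\partial_lQ_{ij}$ is the delicate one: varying the undifferentiated factor $Q_{lk}$ contributes $L_4\,\partial_iQ_{kl}\partial_jQ_{kl}$, while varying each of the two differentiated factors and integrating by parts contributes $-L_4\big(\partial_kQ_{lk}\,\partial_lQ_{ij}+Q_{lk}\partial_k\partial_lQ_{ij}\big)$ — and one checks, using $Q_{lk}=Q_{kl}$, that these two are equal, for a total of $-2L_4\big(\partial_kQ_{lk}\,\partial_lQ_{ij}+Q_{lk}\partial_k\partial_lQ_{ij}\big)$.

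\emph{Step 2 (Lagrange multipliers).} Since $\partial_tQ$ must be symmetric and traceless, taking the antisymmetric part of \eqref{EquationQAbstract} forces $\mu_{ij}-\mu_{ji}=\tfrac12(E_{ij}-E_{ji})$, and taking the trace forces $\lambda=\tfrac1d\tr E$. Hence \eqref{EquationQAbstract} is equivalent to
\begin{equation*}
  \frac{\partial Q_{ij}}{\partial t}=-\tfrac12\big(E_{ij}+E_{ji}\big)+\tfrac1d(\tr E)\,\delta_{ij},
\end{equation*}
i.e. minus the symmetric traceless part of $\frac{\delta\E}{\delta Q}$.

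\emph{Step 3 (assembly) and main obstacle.} Substituting the $E$ of Step 1, one observes that the pieces $-2L_1\Delta Q_{ij}$, $aQ_{ij}$, $c\,\tr(Q^2)Q_{ij}$, $(Q^2)_{ij}$, $\partial_iQ_{kl}\partial_jQ_{kl}$, $\partial_kQ_{lk}\,\partial_lQ_{ij}$ and $Q_{lk}\partial_k\partial_lQ_{ij}$ are already symmetric in $i,j$, so symmetrising does nothing, whereas $\partial_j\partial_kQ_{ik}$ is not, and symmetrising it replaces the coefficient $2$ by $1$ on $\partial_j\partial_kQ_{ik}+\partial_i\partial_kQ_{jk}$. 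For the trace part, $\Delta Q_{ii}=\Delta\tr Q=0$, $Q_{ii}=0$, and $\partial_kQ_{lk}\,\partial_lQ_{ii}=Q_{lk}\partial_k\partial_lQ_{ii}=0$, so only $\tr(\partial_j\partial_kQ_{ik})=\partial_l\partial_kQ_{lk}$, $\tr(\partial_iQ_{kl}\partial_jQ_{kl})=|\nabla Q|^2$ and $\tr(Q^2)$ survive; with $d=2$ this produces the terms $-(L_2+L_3)\partial_l\partial_kQ_{lk}\delta_{ij}$ and $\tfrac{L_4}{2}|\nabla Q|^2\delta_{ij}$, while the $b$-contribution disappears because $\tr(Q^3)=0$ on $\mcS^{(2)}$ lets us take $b=0$. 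Collecting everything reproduces \eqref{eqnQExpansion}. The only real difficulty is the index bookkeeping in the cubic term — choosing the right integrations by parts and the dummy-index relabellings (together with $Q_{lk}=Q_{kl}$) that make the two differentiated contributions coincide; a secondary point is that one must genuinely vary over unconstrained $\varphi$, so that the symmetry and trace corrections land exactly on $\mu_{ij}-\mu_{ji}$ and $\lambda\delta_{ij}$.
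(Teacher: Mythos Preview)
Your proposal is correct and follows essentially the same approach as the paper: compute the unconstrained first variation $\frac{\delta\E}{\delta Q}$ term by term via integration by parts, then determine $\lambda$ and $\mu_{ij}-\mu_{ji}$ by enforcing the trace-free and symmetry constraints on $\partial_tQ$, and finally assemble. The paper simply records the explicit values $\mu_{ij}-\mu_{ji}=(L_2+L_3)(\partial_i\partial_kQ_{jk}-\partial_j\partial_kQ_{ik})$ and $\lambda=-\tfrac{b}{2}\tr(Q^2)-(L_2+L_3)\partial_l\partial_kQ_{lk}+\tfrac{L_4}{2}|\nabla Q|^2$ rather than stating the abstract rule $\lambda=\tfrac1d\tr E$, $\mu_{ij}-\mu_{ji}=\tfrac12(E_{ij}-E_{ji})$, but the content is identical.
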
%}}}

\begin{proof}%{{{
  Choosing a test function $\varphi \in C_c^\infty( \Omega, M^{d\times{d}}(\mathbb{R}) )$ and integrating by parts gives
\begin{align*}
\frac{d}{dt}\mathcal{E}&(Q+t\varphi)\Big|_{t=0} \\
&=\frac{d}{dt}\int_{\Omega}\mathcal{F}_{el}(Q+t\varphi)\,dx+\frac{d}{dt}\int_{\Omega}\mathcal{F}_{bulk}(Q+t\varphi)\,dx\\
&=\int_{\Omega}2L_1\partial_k\varphi_{ij}\partial_kQ_{ij}+L_2(\partial_j\varphi_{ik}\partial_kQ_{ij}+\partial_k\varphi_{ij}\partial_jQ_{ik})
+L_3(\partial_j\varphi_{ij}\partial_kQ_{ik}+\partial_k\varphi_{ik}\partial_jQ_{ij})\\
&\qquad\quad+L_4(\varphi_{lk}\partial_kQ_{ij}\partial_lQ_{ij}+Q_{lk}\partial_k\varphi_{ij}\partial_lQ_{ij}+Q_{lk}\partial_kQ_{ij}\partial_l\varphi_{ij})\,dx\\
&\qquad+\int_{\Omega}aQ_{ij}\varphi_{ij}-\frac{b}{3}\big(\varphi_{ik}Q_{kj}Q_{ji}+Q_{ik}\varphi_{kj}Q_{ji}+Q_{ik}Q_{kj}\varphi_{ji}\big)
+c\tr(Q^2)Q_{ij}\varphi_{ij}\,dx\\
&=\int_{\Omega}(-2L_1\Delta{Q_{ij}}-2L_2\partial_j\partial_kQ_{ik}-2L_3\partial_j\partial_kQ_{ik}
-2L_4\partial_lQ_{ij}\partial_kQ_{lk}
-2L_4\partial_l\partial_kQ_{ij}Q_{lk})\\
&\qquad\quad+L_4\partial_iQ_{kl}\partial_jQ_{kl})\varphi_{ij}\,dx+
\int_{\Omega}aQ_{ij}\varphi_{ij}-bQ_{jk}Q_{ki}\varphi_{ij}+c\tr(Q^2)Q_{ij}\varphi_{ij}\,dx.
\end{align*}
Since $\varphi$ is arbitrary this allows the identification
\begin{align*}
\left(\frac{\delta\mathcal{E}}{\delta Q}\right)_{ij}
&=-2L_1\Delta{Q_{ij}}+aQ_{ij}-bQ_{jk}Q_{ki}+c
\tr(Q^2)Q_{ij} \\
&\qquad-2(L_2+L_3)\partial_j\partial_kQ_{ik}-2L_4\partial_lQ_{ij}\partial_kQ_{lk}
-2L_4\partial_l\partial_kQ_{ij}Q_{lk}+L_4\partial_iQ_{kl}\partial_jQ_{kl}.
\end{align*}

Substituting this in~\eqref{EquationQAbstract} and choosing $\mu$ to
enforce the symmetry constraint $Q_{ij} = Q_{ji}$ forces
$$
  \mu_{ij}-\mu_{ji}=(L_2+L_3)\left(\partial_i\partial_kQ_{jk}-\partial_j\partial_kQ_{ik}\right).
$$
Similarly, choosing $\lambda$ to enforce the trace free constraint
$Q_{ii} = 0$ forces
$$
  \lambda=-\frac{b}{2}\tr(Q^2)-(L_2+L_3)\partial_l\partial_kQ_{lk}+\frac{L_4}{2}|\nabla{Q}|^2.
$$
Substituting $\lambda$, $\mu$ and $\delta \E / \delta Q$
in~\eqref{EquationQAbstract} immediately gives~\eqref{eqnQExpansion}.
\end{proof}%}}}

\section{The reduction of the Landau-de Gennes to Oseen-Frank in \texorpdfstring{$2D$}{2D}}\label{OF_LD}%{{{1

Our goal in this appendix is to show that if $Q$ takes a special form, then the Landau-de~Gennes energy can be reduced to the Oseen Frank energy functional.
We recall that the $3D$ Oseen-Frank energy functional is
\begin{equation}
W=K_1(\mbox{div}\, n)^2+K_2|n\cdot \mbox{curl}\,n|^2+K_3|n\wedge
\mbox{curl}\,n|^2+(K_2+K_4)\big[\tr (\nabla n)^2-(\mbox{div}\,
n)^2\big].\label{OF}
\end{equation} where $K_i$ are elastic constants measuring the relative strength of the various types of spatial variations of the unit vectors $n\in \mathbb{S}^2$  (see \cite{oseen-frank}).
%\footnote{Fri 06/06: Can one of you explain what $K$'s are, and what this means?}
In 2D we clarify that for a vector function $n$ given by
$$
n=(n_1,n_2, 0),
$$
we have
$$
  \mbox{curl}\,n=(0, 0, \partial_1n_2-\partial_2n_1),
$$
and hence%
%\footnote{TODO Thu 06/13 GI: The second equation seems incorrect.
%\\ \textcolor[rgb]{0.98,0.00,0.00}{Saturday 06/15 by Xiang}: It is definitely correct. Note that these two
%vectors are orthogonal to each other and $n$ is of unit length.}
$$
   n\cdot\mbox{curl}\,{n}=0, \ \ |n\wedge\mbox{curl}\,n|^2=|\mbox{curl}\,n|^2.
$$
On the other hand, $n_1^2 + n_2^2 = 1$ implies
$$
   (n_1,n_2, 0) \cdot \partial_1 (n_1, n_2, 0) = 
   (n_1,n_2, 0) \cdot \partial_2 (n_1, n_2, 0) = 0.
$$
and hence
$
  (\partial_1n_1, \partial_1n_2, 0) = c(\partial_2n_1, \partial_2n_2, 0)
$
for some $c \in \R$.
Thus
$\partial_1n_1\partial_2n_2=\partial_2n_1\partial_1n_2$, which shows
$$
  \tr (\nabla n)^2=(\mbox{div}\, n)^2.
$$
Consequently, the Oseen-Frank energy in $2D$ reduces to
\begin{align}
W_{2D}&=K_1(\mbox{div}\,
n)^2+K_3|\mbox{curl}\,n|^2+(K_2+K_4)\big[\tr (\nabla
n)^2-(\mbox{div}\, n)^2\big] \non\\
&=K_1(\mbox{div}\, n)^2+K_3|\mbox{curl}\,n|^2 \label{OF2}.
\end{align}

If $Q$ takes the special form
$$
Q=s\Big(n\otimes{n}-\frac{\Id}{2}\Big),
$$
where $s$ is a constant, then the $2D$ Landau-de Gennes energy
functional reads
\begin{align}
\mathcal{E}(Q,
&\nabla{Q})\non\\
=&L_1|\nabla{Q}|^2+L_2\partial_jQ_{ik}\partial_kQ_{ij}+L_3\partial_jQ_{ij}\partial_kQ_{ik}
+L_4Q_{lk}\partial_lQ_{ij}\partial_kQ_{ij} \non\\
=&2L_1s^2\big[|\mbox{curl}\,n|^2+\tr(\nabla{n})^2\big]+L_2s^2\big[|\mbox{curl}\,n|^2+\tr(\nabla{n})^2\big]+L_3s^2\big[(\mbox{div}\,{n})^2+
|\mbox{curl}\,n|^2\big] \non\\
&+L_4s^3\big[|\mbox{curl}\,n|^2-\tr(\nabla{n})^2\big] \non\\
=&(2L_1+L_2)s^2\big[|\mbox{curl}\,n|^2+\tr(\nabla{n})^2\big]+L_3s^2\big[(\mbox{div}\,{n})^2+
|\mbox{curl}\,n|^2\big]+L_4s^3\big[|\mbox{curl}\,n|^2-\tr(\nabla{n})^2\big] \non\\
=&(\tilde{L}_1s^2+L_3s^2-L_4s^3)(\mbox{div}\,n)^2+(\tilde{L}_1s^2+L_3s^2+L_4s^3)|\mbox{curl}\,n|^2\non\\
&+(\tilde{L}_1s^2-L_4s^3)\big[\tr (\nabla n)^2-(\mbox{div}\,
n)^2\big] \non\\
=&(\tilde{L}_1s^2+L_3s^2-L_4s^3)(\mbox{div}\,n)^2+(\tilde{L}_1s^2+L_3s^2+L_4s^3)|\mbox{curl}\,n|^2.
\label{LD}
\end{align}
Here we denote
$$
   \tilde{L}_1=2L_1+L_2.
$$
We let
\begin{equation}
K_1=\tilde{L}_1s^2+L_3s^2-L_4s^3, \ \
K_3=\tilde{L}_1s^2+L_3s^2+L_4s^3, \label{map1}
\end{equation}
then $\mathcal{E}(Q, \nabla{Q})$ is reduced to $W_{2D}$. And
conversely, $\tilde{L}_1, L_3, L_4$ can be expressed in terms of
$K_i$ in the following way:
\begin{equation}
L_3s^2=K_1, \ \ 2L_4s^3=K_3-K_1, \ \
\tilde{L}_1s^2=\frac{K_3-K_1}{2}.
\end{equation}

\begin{remark}
  Note that if $L_4=0$, then $K_1 \equiv K_3$ in \eqref{map1},
which indicates that the Oseen-Frank energy \eqref{OF2} cannot be
completely recovered without $L_4$. Therefore, the cubic term is
necessary.
\end{remark}

\section{Energy coercivity in \texorpdfstring{$2D$}{2D}}\label{sxnCoercivity}%{{{1
In this appendix we prove that the condition~\eqref{eqn2DCoercivity} (reproduced as~\eqref{coercivity in 2D} below) is equivalent to coercivity in two dimensions, and quantitatively gives the estimate~\eqref{eqn2DCoercivity1} (reproduced as~\eqref{positive energy} below).
As mentioned earlier, the three dimensional analog can be found in~\cite{LMT87,TE98}.
\begin{lemma}\label{lmaPositiveEnergy}%{{{
If $n=2$ and the elastic constants $L_1, L_2, L_3$ satisfy
\begin{equation}
L_1+L_2>0, \ \ L_1+L_3>0, \label{coercivity in 2D}
\end{equation}
then for all $x \in \Omega$ we have
\begin{equation}
\big(L_1|\nabla{Q}|^2+L_2\partial_jQ_{ik}\partial_kQ_{ij}
+L_3\partial_jQ_{ij}\partial_kQ_{ik}\big)(x) \geq \nu|\nabla{Q}|^2(x),
\label{positive energy}
\end{equation}
where
\begin{equation}\label{def of nu}
\nu\defeq \min\{L_1+L_2, L_1+L_3\}>0.
\end{equation}
\end{lemma}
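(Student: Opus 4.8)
The plan is to reduce everything to the two scalar potentials parametrizing $\mcS^{(2)}$. Writing, as in~\eqref{Q in matrix form},
\[
  Q=\begin{pmatrix} p & q \\ q & -p \end{pmatrix},
\]
the whole expression on the left of~\eqref{positive energy} depends pointwise only on the four numbers $\partial_1 p,\partial_2 p,\partial_1 q,\partial_2 q$. First I would substitute this form into each of the three terms and compute them directly. Setting
\[
  A\defeq(\partial_1 p)^2+(\partial_2 p)^2+(\partial_1 q)^2+(\partial_2 q)^2,\qquad
  B\defeq\partial_2 p\,\partial_1 q-\partial_1 p\,\partial_2 q,
\]
a short index computation (slightly delicate because $Q_{22}=-p$ carries a sign) gives
\[
  |\nabla Q|^2=2A,\qquad
  \partial_j Q_{ik}\partial_k Q_{ij}=A+2B,\qquad
  \partial_j Q_{ij}\partial_k Q_{ik}=A-2B,
\]
so that, in particular, the $L_2$- and $L_3$-integrands add up pointwise to $|\nabla Q|^2$, a feature of the two-dimensional setting (it already fails for $3\times3$ traceless symmetric matrices).

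Combining these identities, the left-hand side of~\eqref{positive energy} equals $(2L_1+L_2+L_3)A+2(L_2-L_3)B$. The only remaining point is to control the sign-indefinite cross term. By the arithmetic--geometric mean inequality, $2|\partial_2 p\,\partial_1 q|\le(\partial_2 p)^2+(\partial_1 q)^2$ and $2|\partial_1 p\,\partial_2 q|\le(\partial_1 p)^2+(\partial_2 q)^2$, whence $2|B|\le A$. Therefore
\[
  (2L_1+L_2+L_3)A+2(L_2-L_3)B
  \ge\bigl(2L_1+L_2+L_3-|L_2-L_3|\bigr)A
  =2\bigl(L_1+\min\{L_2,L_3\}\bigr)A,
\]
using $L_2+L_3-|L_2-L_3|=2\min\{L_2,L_3\}$. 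Since $L_1+\min\{L_2,L_3\}=\min\{L_1+L_2,L_1+L_3\}=\nu$ (see~\eqref{def of nu}) and $|\nabla Q|^2=2A$, the right-hand side is exactly $\nu|\nabla Q|^2$, and $\nu>0$ is the hypothesis~\eqref{coercivity in 2D}. This proves~\eqref{positive energy}.

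There is no real obstacle: the only step needing care is the index bookkeeping for the $L_2$-term, and the observation that makes the estimate sharp is that the cross terms in the $L_2$- and $L_3$-integrands are exact opposites, so the combination collapses to $A$ plus one indefinite term handled by a single application of AM--GM. (For completeness one can also note the converse, that $L_1+L_2>0$ and $L_1+L_3>0$ are necessary for coercivity, by testing on configurations with $2B=\pm A$, for which the form reduces to $2(L_1+L_2)A$ or $2(L_1+L_3)A$; this is not needed for the lemma itself.)
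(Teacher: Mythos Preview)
Your proof is correct and follows essentially the same route as the paper: both parametrize $Q$ by the scalars $p,q$ and reduce the elastic density to the same quadratic form $(2L_1+L_2+L_3)A+2(L_2-L_3)B$ in the gradient components. The only difference is in the last step: the paper assembles this into $\chi^T\mathcal{B}\chi$ for an explicit $4\times4$ symmetric matrix $\mathcal{B}$ and reads off its eigenvalues $2(L_1+L_2)$ and $2(L_1+L_3)$, whereas you bound the indefinite term directly via $2|B|\le A$. These are equivalent (your AM--GM bound is exactly the statement that the smallest eigenvalue of $\mathcal{B}$ is $2\nu$), and your version is marginally more elementary while the eigenvalue formulation makes the sharpness and the converse direction transparent at a glance.
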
%}}}
\begin{proof}%{{{
Due to the special structure \eqref{Q in matrix form} of $Q$ in
$2D$, the elastic energy can be rewritten as
\begin{align}\label{2d coercive part}
\big(L_1|\nabla{Q}&|^2+L_2\partial_jQ_{ik}\partial_kQ_{ij}
+L_3\partial_jQ_{ij}\partial_kQ_{ik}\big)\\
&=(2L_1+L_2+L_3)\big(|\nabla{p}|^2+|\nabla{q}|^2\big)
+2(L_3-L_2)\partial_1p\partial_2q+2(L_2-L_3)\partial_2p\partial_1q \non\\
&%\defeq
= \chi^T\mathcal{B}\chi, \non
\end{align}
where
$$
\chi=(\partial_1p, \partial_2p, \partial_1q, \partial_2q)^T \in
\mathbb{R}^4,
$$
and
$$
   \mathcal{B} = \left(
                     \begin{array}{cccc}
                       2L_1+L_2+L_3 & 0 & 0 & L_3-L_2 \\
                       0 & 2L_1+L_2+L_3 & L_2-L_3 & 0 \\
                       0 & L_2-L_3 & 2L_1+L_2+L_3 & 0 \\
                       L_3-L_2 & 0 & 0 & 2L_1+L_2+L_3 \\
                     \end{array}
                   \right) \in \mathbb{R}^{4\times 4}.
$$
By a direct calculation, we see that the eigenvalues of $\mathcal{B}$ are
$$
   \lambda_1=\lambda_2=2(L_1+L_2),
   \quad\text{and}\quad
   \lambda_3=\lambda_4=2(L_1+L_3).
$$
Consequently,
\begin{multline*}
\big(L_1|\nabla{Q}|^2+L_2\partial_jQ_{ik}\partial_kQ_{ij}
+L_3\partial_jQ_{ij}\partial_kQ_{ik}\big)\\
=\chi^T\mathcal{B}\chi\geq
\min\{\lambda_1,\lambda_2\}|\chi|^2=2\nu\big[|\nabla{p}|^2+|\nabla{q}|^2\big]=\nu|\nabla{Q}|^2
\end{multline*}
as desired.
\end{proof}%}}}

\section{Calculations for the hedgehog ansatz}\label{sxnCalculations}%{{{1
In this section we prove Lemma~\ref{lmaTheta} deriving the
evolution of~$\theta$ that reduces the gradient flow dynamics in the
case of the Hedgehog ansatz.

\subsection{Calculations for Hedgehog type solutions: \texorpdfstring{$L_1$}{L1} and
\texorpdfstring{$L_4$}{L4} parts}

\noindent We begin by computing the first derivative of $Q_{ij}$ in
terms of $\theta$.
$$
Q_{ij,k}=\partial_kQ_{ij}=\theta'\frac{x_k}{|x|}\Big(\frac{x_ix_j}{|x|^2}-\frac{\delta_{ij}}{2}
\Big)+\theta\Big(\frac{\delta_{ik}x_j+\delta_{jk}x_i}{|x|^2}-\frac{2x_ix_jx_k}{|x|^4}
\Big).
$$

\noindent Next we compute the second derivative of $Q_{ij}$ in terms
of $\theta$.
%%%%%%%%%%%%%%%%%%%%%%%%%%%%%%%%%%%%%%%%%%%%%%%%%%%%%%%%%%%%%%%%%%%%%%%%%%%%%%%%%%%%%%%%%%%%%%%%%%%%%%%%%%%%%%%%%%%%%%
\begin{align}
Q_{ij,kl}
 =\theta''
   &\frac{x_kx_l}{|x|^2}\Big(\frac{x_ix_j}{|x|^2}-\frac{\delta_{ij}}{2}\Big)
    +\theta'\Big(\frac{\delta_{kl}}{|x|}-\frac{x_kx_l}{|x|^3}\Big)\Big(\frac{x_ix_j}{|x|^2}-\frac{\delta_{ij}}{2}\Big)\non\\
&+\theta'\frac{x_k}{|x|}\Big(\frac{\delta_{il}x_j}{|x|^2}+\frac{\delta_{jl}x_i}{|x|^2}-\frac{2x_ix_jx_l}{|x|^4}\Big)
    +\theta'\frac{x_l}{|x|}\Big(\frac{\delta_{ik}x_j}{|x|^2}+\frac{\delta_{jk}x_i}{|x|^2}-\frac{2x_ix_jx_k}{|x|^4}\Big) \non \\
&+\theta\Big[\frac{\delta_{ik}\delta_{jl}}{|x|^2}-\frac{2\delta_{ik}x_jx_l}{|x|^4}
    +\frac{\delta_{il}\delta_{jk}}{|x|^2}-\frac{2\delta_{jk}x_lx_i}{|x|^4}
    \Big] \non\\
&-\theta\Big[\frac{2(\delta_{il}x_jx_k+\delta_{jl}x_ix_k+\delta_{kl}x_ix_j)}{|x|^4}-\frac{8x_ix_jx_kx_l}{|x|^6}\Big].\non
\end{align}
Thus for the term $2L_4Q_{ij,l} Q_{lk,k}$ in~\eqref{eqnQExpansion}, we have
\begin{align}
2L_4Q_{ij,l}Q_{lk,k} =2&L_4\left[\theta'
\frac{x_l}{|x|}\Big(\frac{x_ix_j}{|x|^2}-\frac{\delta_{ij}}{2}\Big)
+\theta\Big(\frac{\delta_{il}x_j}{|x|^2}+\frac{\delta_{jl}x_i}{|x|^2}-\frac{2x_ix_jx_l}{|x|^4}\Big)\right]\non\\
&\times\left[\theta'\frac{x_k}{|x|}\left(\frac{x_lx_k}{|x|^2}-\frac{\delta_{lk}}{2}\right)+
    \theta\Big(\frac{\delta_{lk}x_k}{|x|^2}+\frac{x_l\delta_{kk}}{|x|^2}-\frac{2x_lx_kx_k}{|x|^4}\Big)\right] \non\\
=2&L_4\left[\theta'
   \frac{x_l}{|x|}\Big(\frac{x_ix_j}{|x|^2}-\frac{\delta_{ij}}{2}\Big)
   +\theta\Big(\frac{\delta_{il}x_j}{|x|^2}+\frac{x_i\delta_{jl}}{|x|^2}-\frac{2x_ix_jx_l}{|x|^4}\Big)\right]\non\\
&\times\Big(\theta'\frac{x_l}{2|x|}+\theta\frac{x_l}{|x|^2}\Big) \non\\
=2&L_4\theta'\Big(\frac{\theta'}{2}+\frac{\theta}{|x|}\Big)\Big(\frac{x_ix_j}{|x|^2}-\frac{\delta_{ij}}{2}\Big).\non
\end{align}
%%%%%%%%%%%%%%%%%%%%%%%%%%%%%%%%%%%%%%%%%%%%%%%%%%%%%%%%%%%%%%%%%%%%%%%%%%%%%%%%%%%%%%%%%%%%%%%%%%%%%%%%%%%%%%%%%%%%%%
For the term $2L_4Q_{kl}Q_{ij,kl}$, we get
\begin{align}
&2L_4Q_{kl}Q_{ij,kl} \non\\
&=2L_4\theta\Big(\frac{x_kx_l}{|x|^2}-\frac{\delta_{kl}}{2}\Big)\times
   \left[ \theta'' \frac{x_kx_l}{|x|^2} + \theta'\Big(\frac{\delta_{kl}}{|x|}-\frac{x_kx_l}{|x|^3}\Big) \right]
    \Big(\frac{x_ix_j}{|x|^2}-\frac{\delta_{ij}}{2}\Big) \non \\
&\qquad+2L_4\theta\Big(\frac{x_kx_l}{|x|^2}-\frac{\delta_{kl}}{2}\Big)\times
    \left[\theta'\frac{x_k}{|x|}\Big(\frac{\delta_{il}x_j}{|x|^2}+\frac{x_i\delta_{jl}}{|x|^2}-\frac{2x_ix_jx_l}{|x|^4}\Big)
    \right] \non\\
&\qquad+2L_4\theta\Big(\frac{x_kx_l}{|x|^2}-\frac{\delta_{kl}}{2}\Big)\times
   \left[\theta'\frac{x_l}{|x|}\Big(\frac{\delta_{ik}x_j}{|x|^2}+\frac{x_i\delta_{jk}}{|x|^2}-\frac{2x_ix_jx_k}{|x|^4}\Big)\right] \non \\
&\qquad+2L_4\theta\Big(\frac{x_kx_l}{|x|^2}-\frac{\delta_{kl}}{2}\Big)\times\theta\Big[\frac{\delta_{ik}\delta_{jl}}{|x|^2}-\frac{2\delta_{ik}x_jx_l}{|x|^4}
    +\frac{\delta_{il}\delta_{jk}}{|x|^2}-\frac{2x_lx_i\delta_{jk}}{|x|^4}\Big]\non\\
&\qquad-2L_4\theta\Big(\frac{x_kx_l}{|x|^2}-\frac{\delta_{kl}}{2}\Big)\times
    \theta\Big[\frac{2(\delta_{il}x_jx_k+x_i\delta_{jl}x_k+x_ix_j\delta_{kl})}{|x|^4}-\frac{8{x}_ix_jx_kx_l}{|x|^6}\Big]
    \non\\
&=L_4\Big(\theta\theta''-\frac{\theta\theta'}{|x|}+\frac{4\theta^2}{|x|^2}
   \Big)\Big(\frac{x_ix_j}{|x|^2}-\frac{\delta_{ij}}{2}\Big). \non
\end{align}
%%%%%%%%%%%%%%%%%%%%%%%%%%%%%%%%%%%%%%%%%%%%%%%%%%%%%%%%%%%%%%%%%%%%%%%%%%%%%%%%%%%%%%%%%%%%%%%%%%%%%%%%%%%%%%%%%%%%%%
For $-L_4Q_{kl,i}Q_{kl,j}$, we have
\begin{align}
-L_4Q_{kl,i}Q_{kl,j} =
  -&L_4\left[\theta'\frac{x_i}{|x|}\Big(\frac{x_kx_l}{|x|^2}-\frac{\delta_{kl}}{2}\Big)
   +\theta\Big(\frac{\delta_{ki}x_l}{|x|^2}+\frac{x_k\delta_{il}}{|x|^2}-\frac{2x_kx_lx_i}{|x|^4}\Big)\right]\non\\
&\times\left[\theta'\frac{x_j}{|x|}\Big(\frac{x_kx_l}{|x|^2}-\frac{\delta_{kl}}{2}\Big)
    +\theta\Big(\frac{\delta_{kj}x_l}{|x|^2}+\frac{x_k\delta_{jl}}{|x|^2}-\frac{2x_kx_lx_j}{|x|^4}\Big)\right]\non\\
=-&L_4(\theta')^2\frac{x_ix_j}{2|x|^2}-\frac{2L_4\theta^2}{|x|^2}\Big(\delta_{ij}-\frac{x_ix_j}{|x|^2}\Big).\non
\end{align}
%%%%%%%%%%%%%%%%%%%%%%%%%%%%%%%%%%%%%%%%%%%%%%%%%%%%%%%%%%%%%%%%%%%%%%%%%%%%%%%%%%%%%%%%%%%%%%%%%%%%%%%%%%%%%%%%%%%%%%
and \begin{equation}
\frac{L_4}{2}|\nabla{Q}|^2\delta_{ij}=L_4\Big[\frac{\theta^2}{|x|^2}+\frac{(\theta')^2}{4}\Big]\delta_{ij}.
\non \end{equation}
%%%%%%%%%%%%%%%%%%%%%%%%%%%%%%%%%%%%%%%%%%%%%%%%%%%%%%%%%%%%%%%%%%%%%%%%%%%%%%%%%%%%%%%%%%%%%%%%%%%%%%%%%%%%%%%%%%%%%%
For terms related to $L_1$, we get
\begin{align}
\Delta{Q}_{ij} = &Q_{ij,kk}
\non\\
=&\theta''\Big(\frac{x_ix_j}{|x|^2}-\frac{\delta_{ij}}{2}\Big)
    +\theta'\frac{1}{|x|}\Big(\frac{x_ix_j}{|x|^2}-\frac{\delta_{ij}}{2}\Big)
    +2\theta'\frac{x_k}{|x|}\Big(\frac{\delta_{ik}x_j}{|x|^2}+\frac{x_i\delta_{jk}}{|x|^2}-\frac{2x_ix_jx_k}{|x|^4}\Big) \non \\
&+\theta\left[\frac{2\delta_{ik}\delta_{jk}}{|x|^2}-\frac{2\delta_{ik}x_jx_k}{|x|^4}
    -\frac{2x_kx_i\delta_{jk}}{|x|^4}
    -\frac{2(\delta_{ik}x_jx_k+\delta_{jk}x_ix_k+\delta_{kk}x_ix_j)}{|x|^4}+\frac{8x_ix_j}{|x|^4}\right]
    \non\\
= &\Big(\theta''+\frac{\theta'}{|x|}-\frac{4\theta}{|x|^2}
  \Big)\Big(\frac{x_ix_j}{|x|^2}-\frac{\delta_{ij}}{2}\Big).\non
\end{align}
%%%%%%%%%%%%%%%%%%%%%%%%%%%%%%%%%%%%%%%%%%%%%%%%%%%%%%%%%%%%%%%%%%%%%%%%%%%%%%%%%%%%%%%%%%%%%%%%%%%%%%%%%%%%%%%%%%%%%%
%In summary, we get \begin{eqnarray} \partial_t\theta{S}_{ij}
%&=&\left\{2L_4\theta'\Big(\frac{\theta'}{3}+\frac{2\theta}{r}\Big)+\frac{4L_4}{3}\Big(
%\theta''\theta-\frac{\theta'\theta}{r}
%\Big)+\frac{2L_4\theta^2}{r^2}
%\right\}S_{ij}\non\\
%&&\;\;+\left\{2L_1\Big(\theta''+\frac{2\theta'}{r}-\frac{6\theta}{r^2}
%  \Big)-a\theta+\frac{b\theta^2}{3}-\frac{2c}{3}\theta^3 \right\}S_{ij}.
%  \label{terms related to L1 and L4} \end{eqnarray}

\subsection{Terms related to \texorpdfstring{$L_2+L_3$}{L2+L3}}

\noindent There are two extra terms in this case, namely
$2(L_2+L_3)\partial_j\partial_kQ_{ik}$ and
$-(L_2+L_3)\partial_l\partial_kQ_{lk}\delta_{ij}$. For the former, we
calculate
\begin{align}
Q_{ik,kj}=\theta''&\frac{x_kx_j}{|x|^2}\Big(\frac{x_ix_k}{|x|^2}-\frac{\delta_{ik}}{2}\Big)
    +\theta'\Big(\frac{\delta_{kj}}{|x|}-\frac{x_kx_j}{|x|^3}\Big)\Big(\frac{x_ix_k}{|x|^2}-\frac{\delta_{ik}}{2}\Big) \non \\
&+\theta'\frac{x_k}{|x|}\Big(\frac{\delta_{ij}x_k}{|x|^2}+\frac{x_i\delta_{kj}}{|x|^2}-\frac{2x_ix_kx_j}{|x|^4}\Big)
    +\theta'\frac{x_j}{|x|}\Big(\frac{\delta_{ik}x_k}{|x|^2}+\frac{x_i\delta_{kk}}{|x|^2}-\frac{2x_ix_kx_k}{|x|^4}\Big) \non \\
&+\theta\left[\frac{\delta_{ik}\delta_{kj}}{|x|^2}-\frac{2\delta_{ik}x_kx_j}{|x|^4}
    +\frac{\delta_{ij}\delta_{kk}}{|x|^2}-\frac{2x_jx_i\delta_{kk}}{|x|^4}\right]\non\\
&-\theta\left[\frac{2(\delta_{ij}x_kx_k+\delta_{jk}x_ix_k+\delta_{kj}x_ix_k)}{|x|^4}-\frac{8x_ix_kx_kx_j}{|x|^6}\right]\non\\
=\theta''&\Big(\frac{x_ix_j}{|x|^2}-\frac{x_ix_j}{2|x|^2}\Big)+\theta'\Big(
    \frac{x_ix_j}{|x|^3}-\frac{x_ix_j}{|x|^3}-\frac{\delta_{ij}}{2|x|}+\frac{x_ix_j}{2|x|^3}
    \Big)\non\\
    &+\theta'\Big(\frac{\delta_{ij}}{|x|}+\frac{x_ix_j}{|x|^3}-\frac{2x_ix_j}{|x|^3}\Big)
    +\theta'\Big(\frac{x_ix_j}{|x|^3}+\frac{2x_ix_j}{|x|^3}-\frac{2x_ix_j}{|x|^3}\Big)\non\\
    &+\theta\Big(\frac{\delta_{ij}}{|x|^2}-\frac{2x_ix_j}{|x|^4}+\frac{2\delta_{ij}}{|x|^2}
    -\frac{4x_ix_j}{|x|^4}-\frac{2\delta_{ij}}{|x|^2}-\frac{4x_ix_j}{|x|^4}+\frac{8x_ix_j}{|x|^4}  \Big) \non\\
=\frac{\theta''}{2}&\frac{x_ix_j}{|x|^2}+\frac{\theta'}{2|x|}\Big(\frac{x_ix_j}{|x|^2}+\delta_{ij}\Big)
    +\frac{\theta}{|x|^2}\Big(\delta_{ij}-\frac{2x_ix_j}{|x|^2}
    \Big).\non
\end{align}
%%%%%%%%%%%%%%%%%%%%%%%%%%%%%%%%%%%%%%%%%%%%%%%%%%%%%%%%%%%%%%%%%%%%%%%%%%%%%%%%%%%%%%%%%%%%%%%%%%%%%%%%%%%%
While for the latter, it holds
\begin{align}
Q_{lk,lk}
=\theta''&\frac{x_kx_l}{|x|^2}\Big(\frac{x_lx_k}{|x|^2}-\frac{\delta_{lk}}{2}\Big)
    +\theta'\Big(\frac{\delta_{kl}}{|x|}-\frac{x_kx_l}{|x|^3}\Big)\Big(\frac{x_lx_k}{|x|^2}-\frac{\delta_{lk}}{2}\Big) \non \\
&+\theta'\frac{x_k}{|x|}\Big(\frac{\delta_{ll}x_k}{|x|^2}+\frac{x_l\delta_{kl}}{|x|^2}-\frac{2x_lx_lx_k}{|x|^4}\Big)
    +\theta'\frac{x_l}{|x|}\Big(\frac{\delta_{lk}x_k}{|x|^2}+\frac{x_l\delta_{kk}}{|x|^2}-\frac{2x_lx_kx_k}{|x|^4}\Big) \non \\
&+\theta\left[\frac{\delta_{lk}\delta_{lk}}{|x|^2}-\frac{2\delta_{lk}x_kx_l}{|x|^4}
    +\frac{\delta_{ll}\delta_{kk}}{|x|^2}-\frac{2x_lx_l\delta_{kk}}{|x|^4}\right]\non\\
&-\theta\left[\frac{2(\delta_{ll}x_kx_k+\delta_{lk}x_lx_k+\delta_{lk}x_lx_k)}{|x|^4}-\frac{8x_lx_lx_kx_k}{|x|^6}\right]
    \non\\
=\frac{\theta''}{2}&+\frac{3}{2|x|}\theta'. \non
\end{align}
We conclude after putting them together that
 \begin{align*}
(L_2+&L_3)(Q_{ik,kj}+Q_{jk,ki})-(L_2+L_3)Q_{lk,lk}\delta_{ij}%\non
\\
&=(L_2+L_3)\Big(\theta''+\frac{\theta'}{|x|}-\frac{4\theta}{|x|^2}\Big)\Big(\frac{x_ix_j}{|x|^2}-\frac{\delta_{ij}}{2}\Big).
%\label{terms to L2 and L3}
\end{align*}
%%%%%%%%%%%%%%%%%%%%%%%%%%%%%%%%%%%%%%%%%%%%%%%%%%%%%%%%%%%%%%%%%%%%%%%%%%%%%%%%%%%%%%%%%%%%%%%%%%%%%%%%%%%%
It is noted that
$$
   \Big(\frac{x_ix_j}{|x|^2}-\frac{\delta_{ij}}{2}\Big)\Big(\frac{x_ix_j}{|x|^2}-\frac{\delta_{ij}}{2}\Big)
    =S_{ij}S_{ij}=\frac12.
$$
Hence summing up the above calculations, then taking the inner
product with $S$, and denoting
$$
    \zeta = 2L_1+(L_2+L_3),
$$
we arrive at the following equation for the scalar unknown $\theta$ only:
\begin{gather}
\partial_t\theta=L_4\bigg(\frac{(\theta')^2}{2}+\frac{\theta\theta'}{r}+\theta\theta''+\frac{6\theta^2}{r^2}\bigg)
+\zeta\theta''+\frac{\zeta\theta'}{r}-\frac{4\zeta\theta}{r^2}-a\theta-\frac{c}{2}\theta^3.
 \non
\end{gather}

%%%%%%%%%%%%%%%%%%%%%%%%%%%%%%%%%%%%%%%%%%%%%%%%%%%%%%%%%%%%%%%%%%%%%%%%%%%%%%%

\section*{Acknowledgments}%{{{1
%X. Xu would like to thank
We thank Xinfu Chen, David Kinderlehrer
and Robert L. Pego for helpful discussions.
We also thank  the anonymous referee for a very careful reading of this paper and his/her comments and suggestions.

%%%%%%%%%%%%%%%%%%%%%%%%%%%%%%%%%%%%%%%%%%%%%%%%%%%%%%%%%%%%%%%%%%%%%%%%%%%%%%%
%}}}1
\end{document}